\newtheorem{theorem}{Theorem}
\newtheorem{remark}[theorem]{Remark}
\newtheorem{lemma}[theorem]{Lemma}
\newtheorem{proposition}[theorem]{Proposition}
\newtheorem{corollary}[theorem]{Corollary}
\newtheorem{definition}[theorem]{Definition}
\newcommand{\T}{\mathbb{T}}
\newcommand{\N}{\mathbb{N}}
\newcommand{\R}{\mathbb{R}}
\renewcommand{\l}{\left}
\renewcommand{\r}{\right}
\newcommand{\dT}{{^{\T}}{\Delta}_{a+}^{\alpha,\beta;\psi}}
\newcommand{\dTt}{{^{\T}}{\Delta}_{0+}^{\alpha,\beta;\psi}}
\newcommand{\dt}{{^{\T}}{\Delta}_{a+}^{\alpha,\beta;\psi}}
\newcommand{\dta}{{^{\T}}{\Delta}_{a+}^{\alpha,\beta;\psi(a)}}
\newcommand{\dga}{{^{\T}_{\rm RL}}{\Delta}_{a+}^{\alpha;\psi}}
\newcommand{\dgaa}{{^{\T}_{\rm RL}}{\Delta}_{a+}^{\alpha;\psi(a)}}
\newcommand{\dg}{{^{\T}_{\rm RL}}{\Delta}_{a+}^{\gamma;\psi}}
\newcommand{\dgr}{{^{\T}_{\rm RL}}{\Delta}_{a+}^{\gamma-\alpha;\psi}}
\newcommand{\dca}{{^{\T}_{\rm C}}{\Delta}_{a+}^{\alpha;\psi}}
\newcommand{\dcaa}{{^{\T}_{\rm C}}{\Delta}_{a+}^{\alpha;\psi(a)}}
\newcommand{\dm}{{^{\T}_{\rm C}}{\Delta}_{a+}^{\mu;\psi}}
\numberwithin{theorem}{section} 
\numberwithin{equation}{section}
\title[Existence, uniqueness and controllability for Hilfer differential equations]{%
Existence, uniqueness and controllability for Hilfer differential equations on times scales}
\thanks{This is a preprint of a paper whose final and definite form is published in 
'Math. Meth. Appl. Sci.', Online ISSN: 1099-1476.}
\author[J. Vanterler da C. Sousa, D. S. Oliveira, G. S. F. Frederico and D. F. M. Torres]{%
J. Vanterler da C. Sousa, D. S. Oliveira, Gast\~{a}o S. F. Frederico and Delfim F. M. Torres}
\address[J. Vanterler da C. Sousa]{Center for Mathematics, 
Computing and Cognition, Federal University of ABC,\newline 
\indent Avenida dos Estados, 5001, Bairro Bangu, 09.210-580, Santo Andre, SP, Brazil}
\email{\tt jose.vanterler@ufabc.edu.br, vanterlermatematico@hotmail.com}
\address[D. S. Oliveira]{Universidade Tecnol\'ogica Federal do Paran\'a, 
85053-525, Guarapuava-PR, Brazil}
\email{\tt oliveiradaniela@utfpr.edu.br}
\address[G. S. F. Frederico]{Federal University of Cear\'{a}, 
Campus Russas, Brazil Cear\'{a}, Brazil}
\email{\tt gastao.frederico@ua.pt}
\address[D. F. M. Torres]{Center for Research and Development in Mathematics and Applications (CIDMA),\newline
\indent Department of Mathematics, University of Aveiro, 3810-193 Aveiro, Portugal}
\email{\tt delfim@ua.pt}
\subjclass[2010]{26A33, 26E70, 34A08, 34A12, 93B05.\\
$^{*}$ Correspondent author: J. Vanterler da C. Sousa} 
\keywords{$\psi$-Hilfer fractional derivatives, 
fractional initial value problems, times scales, 
existence, uniqueness, controllability.}
\begin{document}

\begin{abstract} 
We introduce a new version of $\psi$-Hilfer fractional derivative, 
on an arbitrary time scale. The fundamental properties of the new 
operator are investigated and, in particular, we prove an integration by 
parts formula. Using the Laplace transform and the obtained integration 
by parts formula, we then propose a $\psi$-Riemann--Liouville fractional 
integral on times scales. The applicability of the new operators is 
illustrated by considering a fractional initial value problem on an 
arbitrary time scale, for which we prove existence, uniqueness 
and controllability of solutions in a suitable Banach space. The obtained 
results are interesting and nontrivial even for particular choices: 
(i) of the time scale; (ii) of the order of differentiation; 
and/or (iii) function $\psi$; opening new directions of investigation.
\end{abstract}

\maketitle


\section{Introduction} 

Fractional calculus is nowadays a well consolidated field of research 
with many applications in various areas of knowledge, which is interesting 
and important as it presents more refined results that are in line with reality 
\cite{Sousa,Sousa1,Sousa3,Sousa4,Sousa5,Oliveira,Podlubny,Kilbas,Lakshmikantham}. 

In recent years, applications via fractional derivatives have 
drawn the attention of numerous researchers, from different areas of knowledge, 
particularly during the two years that the world has faced the COVID-19 pandemic 
\cite{novo1}. On the other hand, mathematical models have been investigated 
for years in order to try to model and provide a better description of the studied 
phenomenon. In this sense, the fundamental role of fractional derivatives enters, 
i.e., trying to improve the results and provide results closer to reality compared 
to the classic case. In 2022 Khan et al. discussed an interesting work on a fractional 
mathematical model of tuberculosis in China and presented numerical simulations 
that show the applicability of the schemes \cite{novo2}. In the same year, 
Etemad et al. \cite{novo3}, using the Caputo-type fractional derivative, 
investigated a model of the AHIN1/09 virus. It is remarkable the importance 
and consequences that fractional derivatives provide when used to discuss 
theoretical and practical problems. For other interesting works that discuss 
phenomena, see \cite{novo4,novo6} and the references therein.

Although a well consolidated area, there are, however, still numerous open 
problems and paths to be unraveled \cite{Aghayan1,Hassani,Lopes,Aghayan}. 
A research path that some mathematicians have recently been interested consists 
to investigate fractional calculus on time scales \cite{Zhu,Yan,Kumar,Malik,Benkhettou}.

In 2007, Atici and Eloe investigated the fractional $q$-calculus on the quantum time scale \cite{Atici}. 
The authors presented a study on some properties of the fractional $q$-calculus and investigated 
the $q$-Laplace transform. In 2011, Bastos et al. investigated a class of fractional 
derivatives on times scales using the inverse Laplace transform \cite{Bastos}. Nowadays,
the subject of fractional calculus on time scales is very rich and under strong current research 
\cite{Benkhettou,Ammi,Ahmadkhanlu,Torres,Kumar1,Belaid,Williams}.
Here, we make use of the idea behind a $\psi$-Hilfer fractional derivative, 
that is, fractional differentiation of functions with respect to another function, 
which is a remarkable and relevant idea with a big impact on fractional calculus and its applications, 
in particular for problems described by fractional differential equations \cite{MR4420453}.
The study of $\psi$-Riemann--Liouville fractional integrals with respect to a function $\psi$
on time scales was initiated by Mekhalfi and Torres in 2017, where they introduced 
some generalized fractional operators on time scales 
of a function with respect to another function and carried out some applications 
to dynamic equations \cite{Mekhalfi}. Later, in 2018, Harikrishnana et al. 
proposed the $\psi$-Hilfer fractional operator on time scales as
\begin{eqnarray}
\label{901}
\dT g(x)={^{\T}}{\mathds{I}}_{a+}^{\beta(n-\alpha);\psi}
\, ^{\T}\Delta\,{^{\T}}{\mathds{I}}_{a+}^{n-\gamma;\psi}g(\tau)
\end{eqnarray}
with $\gamma=\alpha+\beta(n-\alpha)$ and 
$^{\T}\Delta=\dfrac{d}{d\tau}$ \cite{Harikrishnana}.
However, we note that (\ref{901}) does not comply with the standard
fractional calculus. In fact, instead of $^{\T}\Delta=\dfrac{d}{d\tau}$, 
we should have $^{\T}\Delta_{\psi}=\bigg(\dfrac{\Delta}{\psi^{\Delta}(x)}\bigg)$. 
To see that the term $^{\T}\Delta=\dfrac{d}{d\tau}$ is inconsistent, 
one just needs to take $\T=\mathbb{R}$ and $\beta\rightarrow 0$ or $\beta\rightarrow 1$, 
for which one does not obtain the $\psi$-Riemann--Liouville 
or the $\psi$-Caputo fractional derivative, as desired.

One of the problems when working with solutions of fractional differential equations 
on time scales is that the area is still under construction and some fundamental tools 
are still under discussion and investigation. Examples include the lack of existence, 
uniqueness, stability and controllability results, which prevent attacking 
more sophisticated problems. To fill the gap, and motivated by the above mentioned works, 
we provide here new tools and an extension of the fractional calculus on time scales. 
We claim that the results we now obtain contribute significantly to the field 
of fractional differential equations on time scales. 
Precisely, we consider the fractional initial value problem 
\begin{eqnarray}
\label{I*}
\begin{dcases}
\dTt y(t)=g^{\T}(\alpha,\gamma-\alpha)\,f(t,y(t)), \qquad t\in[0,1]=J\subseteq\T,\\
{^{\T}}{\mathds{I}}_{0}^{1-\gamma;\psi}y(0)=0, 
\end{dcases}
\end{eqnarray}
where $\dTt(\cdot)$ is the $\psi$-Hilfer fractional derivative on time scales 
of order $\alpha$, $0<\alpha<1$, and of type $0\leq{\beta}\leq{1}$ 
and $f:J\times\T\to\R$ is a right-dense continuous function.
Moreover, we will add an operator $B$ and a control function $u$
into problem (\ref{I*}), which results in
\begin{eqnarray}
\label{eqI}
\begin{dcases}
\dTt y(t)=g^{\T}(\alpha,\gamma-\alpha)\,(f(t,y(t))+Bu(t)), \quad t\in[0,1]=J\subset\T,\\
{^{\T}}{\mathds{I}}_{0}^{1-\gamma;\psi}y(0)=0,
\end{dcases}
\end{eqnarray}
where $B:\R\to\R$ is assumed to be a bounded linear operator and the control 
function $u$ belongs to $L^{2}(J,\R)$. Furthermore, 
$g^{\T}(\alpha,\gamma-\alpha)\,:=\dfrac{B^{\ T}_{0,1}(\alpha,\gamma-\alpha)}{B(\alpha,\gamma-\alpha)}$, 
where $B^{\T}_{0,1}(\alpha,\gamma-\alpha)$ and $B(\alpha,\gamma-\alpha)$ 
with $0<\alpha<1$, $\gamma=\alpha+\beta(1-\alpha)$, are time-scale Beta 
functions and the classical Beta function, respectively.

Our main contributions may be summarized in three axes.

Firstly, we present an extension to the $\psi$-Hilfer 
fractional derivative in the sense of time scales 
and discuss the essential properties in formulating 
a fractional derivative. Furthermore, some properties for the 
$\psi$-Riemann--Liouville fractional integral on time scales are also given. 
In particular, we investigate Leibniz's rule, the Laplace transform 
and integration by parts for both integral and fractional derivatives 
on time scales, respectively in the sense of $\psi$-Riemann-Liouville 
and $\psi$-Hilfer.

Our second direction of contribution consists to investigate the question 
of existence and uniqueness of solution for the initial value 
problem (\ref{I*}). In concrete, we prove the following two results.

\begin{theorem} \label{teorema32}
Suppose $J=[0,1]\subseteq\T$. 
Then, the initial value problem \eqref{I*} has a unique solution on $J$ 
if function $f(t,y(t))$ is a right-dense continuous function 
for which there exists $M>0$ such that $|f(t,y(t))|< M$ on $J$ 
and the Lipschitz condition 
\begin{equation*}
\|f(t,x)-f(t,y)\|\leq {L}\|x-y\|
\end{equation*}
holds for some ${L}>0$ and for all $t\in J$ and $x,y\in\R$.
\end{theorem}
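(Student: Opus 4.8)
The plan is to transform the initial value problem \eqref{I*} into an equivalent fixed point equation and then invoke the Banach contraction principle on a suitable Banach space. First I would apply the $\psi$-Riemann--Liouville fractional integral ${^{\T}}{\mathds{I}}_{0+}^{\alpha;\psi}$ to both sides of the dynamic equation and use the composition relation between this integral and the $\psi$-Hilfer derivative $\dTt$ (the time-scale analogue of $I^{\alpha}D^{\alpha,\beta}y=y-\text{boundary term}$), which follows from the integration by parts formula and the Laplace transform identities obtained earlier. The boundary term carries the factor $(\psi(t)-\psi(0))^{\gamma-1}$ and is annihilated precisely by the initial condition ${^{\T}}{\mathds{I}}_{0}^{1-\gamma;\psi}y(0)=0$. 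In this way $y$ solves \eqref{I*} if and only if it satisfies
\begin{equation*}
y(t)=g^{\T}(\alpha,\gamma-\alpha)\,{^{\T}}{\mathds{I}}_{0+}^{\alpha;\psi}f(t,y(t)),\qquad t\in J.
\end{equation*}

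Next I would work in the Banach space $C(J,\R)$ equipped with the supremum norm and define the operator $\mathcal{F}$ by $(\mathcal{F}y)(t)=g^{\T}(\alpha,\gamma-\alpha)\,{^{\T}}{\mathds{I}}_{0+}^{\alpha;\psi}f(t,y(t))$, so that fixed points of $\mathcal{F}$ are exactly the solutions sought. Right-dense continuity of $f$ and of $\psi$ guarantees that $\mathcal{F}y$ is again right-dense continuous, so $\mathcal{F}$ maps $C(J,\R)$ into itself; the bound $|f(t,y(t))|<M$ together with the evaluation of ${^{\T}}{\mathds{I}}_{0+}^{\alpha;\psi}$ on a constant --- which is where the normalizing factor $g^{\T}(\alpha,\gamma-\alpha)$ and the time-scale Beta function $B^{\T}_{0,1}(\alpha,\gamma-\alpha)$ enter --- shows that $\mathcal{F}$ sends bounded sets to bounded sets.

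The decisive step is to verify that $\mathcal{F}$ is a contraction. Using the Lipschitz hypothesis and monotonicity of the fractional integral,
\begin{equation*}
\|\mathcal{F}x-\mathcal{F}y\|\le L\,g^{\T}(\alpha,\gamma-\alpha)\,\|{^{\T}}{\mathds{I}}_{0+}^{\alpha;\psi}1\|\,\|x-y\|,
\end{equation*}
and the problem reduces to controlling the constant $\kappa:=L\,g^{\T}(\alpha,\gamma-\alpha)\,\|{^{\T}}{\mathds{I}}_{0+}^{\alpha;\psi}1\|$. I expect this to be the main obstacle: computing ${^{\T}}{\mathds{I}}_{0+}^{\alpha;\psi}1$ explicitly on an arbitrary time scale is delicate, since the delta integral of $(\psi(t)-\psi(s))^{\alpha-1}$ does not collapse to the clean classical value, and this is exactly why the Beta-function normalization $g^{\T}$ was built into the model. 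If $\kappa<1$ cannot be secured directly on $J=[0,1]$, I would either introduce a Bielecki-type weighted norm or pass to a high enough iterate $\mathcal{F}^{n}$ and apply the generalized contraction principle; in all cases the conclusion is a unique fixed point, hence a unique solution of \eqref{I*} on $J$.
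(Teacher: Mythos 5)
Your overall strategy --- apply the fractional integral to \eqref{I*}, use the initial condition ${^{\T}}{\mathds{I}}_{0}^{1-\gamma;\psi}y(0)=0$ to annihilate the boundary term coming from the composition formula (Theorem~\ref{th-5} with $n=1$), and then run a Banach fixed-point argument on the resulting Volterra-type equation --- is exactly the paper's route (its unnumbered Lemma in Section~\ref{sec5} together with the proof of Theorem~\ref{teorema32}). Two bookkeeping discrepancies are worth flagging. First, your integral equation carries the wrong constant: the factor $g^{\T}(\alpha,\gamma-\alpha)$ that you keep in front of ${^{\T}}{\mathds{I}}_{0+}^{\alpha;\psi}$ is placed in the right-hand side of \eqref{I*} precisely so that it cancels against the identical factor produced by Theorem~\ref{th-5}; what survives is the other factor, giving
\begin{equation*}
y(t)=\frac{1}{g^{\T}(\gamma-1,1-\gamma)\,\Gamma(\alpha)}\int_{0}^{t}\psi^{\Delta}(s)(\psi(t)-\psi(s))^{\alpha-1}f(s,y(s))\,\Delta s,
\end{equation*}
as in \eqref{II}. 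Second, the paper does not work in $C(J,\R)$ with the supremum norm but in the weighted space with norm $\sup_{t\in J}|(\psi(t)-\psi(0))^{1-\gamma}y(t)|$; this weighted norm is what produces the exponent $1-\beta(1-\alpha)$ in all the estimates and is the natural space for Hilfer-type problems, although with the homogeneous initial condition your choice is defensible.

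The genuine gap is your closing claim that, should the contraction constant fail to be less than one, a Bielecki-type norm or a sufficiently high iterate $\mathcal{F}^{n}$ will ``in all cases'' yield a unique fixed point. Both of those devices rest on the semigroup property of the fractional integral: classically one gets $\|(\mathds{I}^{\alpha})^{n}1\|\leq(\psi(1)-\psi(0))^{n\alpha}/\Gamma(n\alpha+1)$ and the factorial wins over $L^{n}$. On an arbitrary time scale, however, the paper's own Proposition~\ref{A} and Remark~\ref{remarknovo} show that composition holds only up to the correction factors $g^{\T}(\cdot,\cdot)\geq 1$, so iterating produces a product $\prod_{k=1}^{n-1}g^{\T}(\alpha,k\alpha)$ multiplying ${^{\T}}{\mathds{I}}_{0+}^{n\alpha;\psi}$, and no available estimate shows that this product grows more slowly than $\Gamma(n\alpha+1)$; the Bielecki computation runs into the same obstruction. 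So this step is not a routine fix but an unresolved issue on time scales. You are, however, in good company: the paper's own proof does not resolve it either --- it concludes only conditionally, under the smallness assumption $L\,(\psi(1)-\psi(0))^{\alpha}/\bigl(g^{\T}(\gamma-1,1-\gamma)\Gamma(\alpha+1)\bigr)<1$, which does not appear in the statement of Theorem~\ref{teorema32}. Your proposal therefore reproduces the published argument, conditional step included; just be aware that the claimed remedies for removing the condition are not established in this setting.
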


\begin{theorem} \label{th-3.4}
Suppose $f:J\times\R\to\R$ is a right-dense continuous function 
such that there exists $M>0$ with $|f(t,y)|\leq M$ 
for all $t\in J,y\in\R.$ Then problem \eqref{I*} has a solution on $J$.
\end{theorem}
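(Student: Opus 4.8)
The plan is to prove existence (without uniqueness) via a fixed-point theorem that does not require a contraction. Theorem~\ref{teorema32} already gives uniqueness under a Lipschitz hypothesis using the Banach contraction principle; here we only assume boundedness of $f$, so the natural tool is the Schauder fixed point theorem (or a nonlinear alternative of Leray--Schauder type). First I would reformulate \eqref{I*} as a fixed-point problem for an integral operator $\mathcal{N}$ on a suitable Banach space. Using the $\psi$-Riemann--Liouville fractional integral on time scales and the integration-by-parts/inversion results established earlier in the paper, the solution of \eqref{I*} should satisfy
\begin{equation*}
y(t) = \frac{1}{B(\alpha,\gamma-\alpha)}\, {^{\T}}{\mathds{I}}_{0+}^{\alpha;\psi} f(t,y(t)),
\end{equation*}
so that solving \eqref{I*} is equivalent to finding a fixed point $\mathcal{N}y = y$, where $\mathcal{N}$ is the operator whose action is the right-hand side above. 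The weight $g^{\T}(\alpha,\gamma-\alpha)$ and the initial condition ${^{\T}}{\mathds{I}}_{0}^{1-\gamma;\psi}y(0)=0$ are exactly what make this equivalence clean.

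\smallskip

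The three hypotheses of Schauder's theorem must then be verified on a closed, bounded, convex set. I would fix a ball $B_R=\{y: \|y\|\le R\}$ in the working Banach space and carry out the following steps in order. Step one: show $\mathcal{N}(B_R)\subseteq B_R$ for $R$ chosen large enough; this is where the uniform bound $|f(t,y)|\le M$ is used, since $\|\mathcal{N}y\|$ is controlled by $M$ times the norm of the fractional integral of the constant function, which is finite on the compact interval $J=[0,1]$. Step two: show $\mathcal{N}$ is continuous; here right-dense continuity of $f$ together with a dominated-convergence argument (using the $M$-bound as the dominating function) gives continuity of $\mathcal{N}y$ in $y$. Step three, the crux: show $\mathcal{N}(B_R)$ is relatively compact, which by the Arzelà--Ascoli theorem reduces to uniform boundedness (already from step one) plus equicontinuity of the image family.

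\smallskip

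The hard part will be the equicontinuity estimate in step three. I must show that for $t_1<t_2$ in $J$ the difference $|\mathcal{N}y(t_2)-\mathcal{N}y(t_1)|$ tends to $0$ as $t_2-t_1\to 0$, uniformly in $y\in B_R$. This requires splitting the fractional-integral kernel into the overlapping region and the new increment and bounding each piece; on a general time scale one cannot take the derivative freely, so the estimate must be done directly on the $\psi$-weighted kernel using the $M$-bound on $f$ and the continuity properties of $\psi$ and of the time-scale integral. Once equicontinuity is established, Arzelà--Ascoli gives relative compactness, and Schauder's fixed point theorem then furnishes a fixed point $y=\mathcal{N}y$, which is the desired solution of \eqref{I*} on $J$. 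I would close by noting that, unlike Theorem~\ref{teorema32}, no Lipschitz condition is invoked, so only existence (not uniqueness) is claimed.
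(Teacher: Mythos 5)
Your proposal is correct and follows essentially the same route as the paper: reduction of \eqref{I*} to the equivalent fractional integral equation, then a Schauder-type fixed point argument carried out through continuity of the integral operator, uniform boundedness via $|f|\leq M$, and equicontinuity plus the Arzel\`{a}--Ascoli theorem. The only differences are cosmetic: the paper works concretely in the weighted space $C_{1-\gamma,\psi}(J,\R)$, phrases the last step as an a priori bound on $\{y=\lambda\Theta(y):0<\lambda<1\}$ (Leray--Schauder/Schaefer form rather than your invariant ball), and its integral equation carries the constant $1/g^{\T}(\gamma-1,1-\gamma)$ rather than your $1/B(\alpha,\gamma-\alpha)$ --- a discrepancy that is harmless for the argument, since any fixed positive constant yields the same estimates.
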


To better understand our third axle of novelty, 
let us first present our hypotheses. They are:

\noindent \rm{($A_1$)} Let $f:J\times\T\to\R$ be 
a right-dense continuous function.

\noindent \rm{($A_2$)} There exists $M>0$ 
on $[0,1]=J\subset\T$ such that 
$|f(t,y(t))|\leq M$.    

\noindent \rm{($A_3$)} Let $D=\{x\in C_{1-\gamma}(J,\R):\|x\|_{C_{1-\gamma}}\leq\rho\}$.

\noindent \rm{($A_4$)} The linear operator $\mathscr{W} u:L^{2}(J,\R)\to\R$ defined by
$$
\mathscr{W}_{\alpha}u
=\frac{1}{g^{\T}(\gamma-1,1-\gamma)\Gamma(\alpha)}\int_{0}^{t}\psi^{\Delta}(s)(\psi(t)-\psi(s))^{\alpha-1}
(Bu)(s)\Delta s
$$
has bounded invertible operators $(\mathscr{W}_{\alpha}u)^{-1}$, 
which take values in $L^{2}(J,\R)$, ${\rm Ker}$/$\mathscr{W}_{\alpha}$, 
and there exists positive constants $M_{W}$ such that 
$\|(\mathscr{W}_{\alpha})^{-1}\|\leq M_{W}$. Also, 
$B$ is a continuous operator from $\R$ to $\R$ and there exists 
a positive constant $M_{B}$ such that $\|B\|\leq M_{B}$. Furthermore, we have
$g^{\T}(\gamma-1,1-\gamma):=\dfrac{B^{\T}_{0,1}(\gamma-1,1-\gamma)}{B(\gamma-1,1-\gamma)}$ 
with $\gamma=\alpha+\beta(1-\alpha)$.

We investigate the solution controllability for \eqref{eqI}.
Precisely, we prove total controllability under hypotheses $A_i$, $i = 1,\ldots, 4$. 

\begin{theorem}
\label{th-4.2}
If all assumptions $(A_1)$--$(A_4)$ hold, then the control system \eqref{eqI} 
is controllable on $J$ provided
\begin{eqnarray}
\label{4.1}
M\,\frac{(\psi(1)-\psi(0))^{1-\beta(1-\alpha)}}{g^{\T}(\gamma-1,1-\gamma)\Gamma(\alpha+1)}<1,
\end{eqnarray}
where $g^{\T}(\gamma-1,1-\gamma):=\dfrac{B^{\T}_{0,1}(\gamma-1,1-\gamma)}{B(\gamma-1,1-\gamma)}$ 
with $\gamma=\alpha+\beta(1-\alpha)$.
\end{theorem}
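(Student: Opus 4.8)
The plan is to recast the controlled problem \eqref{eqI} as a fixed-point equation on the weighted Banach space $C_{1-\gamma}(J,\R)$ and, for an arbitrary prescribed terminal state, to exhibit a control that steers the solution to that state. First I would apply the $\psi$-Riemann--Liouville fractional integral ${^{\T}}{\mathds{I}}_{0+}^{\alpha;\psi}$ to both sides of \eqref{eqI}. By the inversion relation between $\dTt$ and ${^{\T}}{\mathds{I}}_{0+}^{\alpha;\psi}$ and the integration-by-parts formula established earlier, and since the initial condition ${^{\T}}{\mathds{I}}_{0}^{1-\gamma;\psi}y(0)=0$ cancels the homogeneous contribution, the solution is characterised by the integral equation
\begin{equation*}
y(t)=\frac{g^{\T}(\alpha,\gamma-\alpha)}{\Gamma(\alpha)}\int_{0}^{t}\psi^{\Delta}(s)\l(\psi(t)-\psi(s)\r)^{\alpha-1}\bigl(f(s,y(s))+(Bu)(s)\bigr)\,\Delta s,
\end{equation*}
where the precise scalar follows from the explicit $\psi$-Riemann--Liouville kernel on time scales after its Beta-function normalisation is folded into the constant $g^{\T}(\alpha,\gamma-\alpha)$ placed in \eqref{eqI}.

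Next I would construct the steering control. Given a target value $y_{1}$ to be attained at $t=1$, define
\begin{equation*}
u(t)=(\mathscr{W}_{\alpha})^{-1}\l[y_{1}-\frac{g^{\T}(\alpha,\gamma-\alpha)}{\Gamma(\alpha)}\int_{0}^{1}\psi^{\Delta}(s)\l(\psi(1)-\psi(s)\r)^{\alpha-1}f(s,y(s))\,\Delta s\r](t),
\end{equation*}
which is well defined and lies in $L^{2}(J,\R)$ by the invertibility hypothesis $(A_{4})$. Evaluating the integral representation at $t=1$, substituting this $u$, and using $\mathscr{W}_{\alpha}(\mathscr{W}_{\alpha})^{-1}=\mathrm{Id}$ yields $y(1)=y_{1}$; hence any fixed point of the induced solution operator is a trajectory reaching the prescribed state.

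Then I would define $\Phi:C_{1-\gamma}(J,\R)\to C_{1-\gamma}(J,\R)$ by letting $(\Phi y)(t)$ equal the right-hand side of the representation with $u=u_{y}$ the control just built, and restrict $\Phi$ to the ball $D$ of $(A_{3})$. To prove $\Phi(D)\subseteq D$ I would estimate $\|\Phi y\|_{C_{1-\gamma}}$ using $|f|\le M$ from $(A_{2})$, together with $\|B\|\le M_{B}$ and $\|(\mathscr{W}_{\alpha})^{-1}\|\le M_{W}$ from $(A_{4})$, and the kernel computation $\int_{0}^{t}\psi^{\Delta}(s)(\psi(t)-\psi(s))^{\alpha-1}\Delta s=(\psi(t)-\psi(0))^{\alpha}/\alpha$. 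Multiplying by the weight $(\psi(t)-\psi(0))^{1-\gamma}$ intrinsic to the $C_{1-\gamma}$ norm produces the combined exponent $\alpha+1-\gamma=1-\beta(1-\alpha)$ and the factor $1/(g^{\T}(\gamma-1,1-\gamma)\Gamma(\alpha+1))$, so the self-map inequality reduces exactly to \eqref{4.1}. Continuity of $\Phi$ follows from continuity of $f$ and boundedness of the operators, and relative compactness of $\Phi(D)$ from an equicontinuity estimate for the fractional kernel and the Arzel\`a--Ascoli theorem on the time scale; Schauder's fixed-point theorem then gives a fixed point, i.e.\ a solution of \eqref{eqI} whose control reaches $y_{1}$. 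Since $y_{1}$ is arbitrary, controllability on $J$ follows.

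The main obstacle I expect is the $\Delta$-integral bookkeeping that converts the self-map estimate into the sharp constant of \eqref{4.1}: one must handle the singular kernel $(\psi(t)-\psi(s))^{\alpha-1}$ carefully under $\Delta$-integration on a general time scale, verify the time-scale Beta-function identities relating $g^{\T}(\alpha,\gamma-\alpha)$ to $g^{\T}(\gamma-1,1-\gamma)$ and to the pure $\Gamma(\alpha+1)$ factor, and establish equicontinuity uniformly in $t$ so that Arzel\`a--Ascoli applies. A secondary subtlety is confirming that the constructed control genuinely belongs to $L^{2}(J,\R)$ and that $\Phi$ maps into the weighted space $C_{1-\gamma}$ rather than merely into continuous functions.
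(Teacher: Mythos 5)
Your proposal is correct in outline and follows the same skeleton as the paper's proof: reduce \eqref{eqI} to the weighted integral representation, build the steering control $u(t)=(\mathscr{W}_{\alpha})^{-1}\bigl[y_{1}-\tfrac{1}{g^{\T}(\gamma-1,1-\gamma)\Gamma(\alpha)}\int_{0}^{t}\psi^{\Delta}(s)(\psi(t)-\psi(s))^{\alpha-1}f(s,y(s))\Delta s\bigr]$ exactly as in the paper's Lemma~\ref{lemma 3.4} (including the verification $y(1)=y_{1}$ via $\mathscr{W}_{\alpha}(\mathscr{W}_{\alpha})^{-1}=\mathrm{Id}$ and the uniform bound $\|u\|\leq M_{u}^{\circ}$), and then run a fixed-point argument on a ball of the weighted space. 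Where you genuinely diverge is the fixed-point machinery: you treat the full solution operator (control term included) as a single compact self-map and invoke Schauder, whereas the paper decomposes $K=K_{1}+K_{2}$, with $K_{1}$ carrying the $Bu$ term and $K_{2}$ carrying $f$; it shows $K_{1}$ maps the ball into itself (via the control estimate), shows $K_{2}$ is continuous, maps the ball into itself (this is where \eqref{4.1} enters, through a contradiction argument), and is compact by Arzel\`a--Ascoli, and then concludes with a measure-of-noncompactness estimate (Lemma~\ref{lemma3.2}) and Sadovskii's fixed point theorem. Your route is the more elementary one, and it is viable here precisely because $u_{y}$ is uniformly bounded over the ball (by the paper's own Lemma~\ref{lemma 3.4}), so the control part of the operator is itself compact; the paper's Sadovskii scheme buys robustness in situations where the control component is only known to be non-expansive rather than compact. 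Two cautions. First, your claim that the self-map inequality ``reduces exactly to \eqref{4.1}'' is loose: the control term contributes constants of the form $M_{W}M_{B}\bigl(|y_{1}|+\tfrac{M}{g^{\T}(\gamma-1,1-\gamma)\Gamma(\alpha+1)}(\psi(1)-\psi(0))^{\alpha}\bigr)$ that must also be absorbed into the radius $\delta$; in the paper, condition \eqref{4.1} governs only the $K_{2}$ part, while the $K_{1}$ part is handled separately by the choice of $\delta$. Second, your normalising constant $g^{\T}(\alpha,\gamma-\alpha)/\Gamma(\alpha)$ in the integral representation differs from the paper's $1/\bigl(g^{\T}(\gamma-1,1-\gamma)\Gamma(\alpha)\bigr)$, which comes from Theorem~\ref{th-5} with $n=1$ and cancellation of the factor $g^{\T}(\alpha,\gamma-\alpha)$ planted in \eqref{eqI}; since \eqref{4.1} is stated with $g^{\T}(\gamma-1,1-\gamma)$, this bookkeeping must be settled your way before the final estimate matches the theorem's hypothesis.
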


It should be noted that our results are interesting and nontrivial
even in the particular cases of (i) the standard $\psi$-Hilfer fractional 
derivative (obtained by choosing $\mathbb{T}=\mathbb{R}$); 
(ii) the classical integer-order case (obtained with $\mathbb{T}=\mathbb{R}$ and $\alpha=1$); 
and (iii) the standard time-scale calculus (obtained by considering $\alpha=1$). 
Moreover, for concrete particular choices of function $\psi$, we obtain 
new formulations involving fractional derivatives on time scales. This opens 
new directions of investigation and discussion. In particular, an open problem 
consists to compute the solutions to problems involving such operators
by developing suitable numerical methods.

The article is organized as follows. In Section~\ref{sec2}, we recall 
the necessary concepts and results from the literature that help us 
throughout the paper. In Section~\ref{sec3}, we present our new extension 
involving the $\psi$-Hilfer fractional derivative on time scales. The 
fundamental properties of the new operator are investigated and, in particular, 
we obtain an integration by parts formula and the corresponding Laplace transform. 
We proceed by attacking the second main purpose of our paper, that is, 
to prove existence and uniqueness of a solution to problem \eqref{I*}. 
This is the subject of Section~\ref{sec5}. Finally, we investigate the 
controllability of (\ref{eqI}) in Section~\ref{sec6}. 
We end the article with comments and future work.


\section{Preliminaries}
\label{sec2}

A time scale $\T$ is an arbitrary nonempty closed subset of the real numbers. 
For $\in \T$, one defines the forward jump operator 
$\sigma: \T\rightarrow \T$ by \cite{Agarwal,Bohner,Anastassiou}
\begin{eqnarray*}
\sigma(\xi)=\inf\left\{s\in\T: s>\xi \right\}
\end{eqnarray*}
while the backward jump operator$\rho \T\rightarrow \T$ is defined by
\begin{equation*}
\rho(\xi)=\sup\left\{ s\in\T:\, s<\xi\right\}.
\end{equation*}
In addition, we put $\sigma(\max\,\T)=\max \T$ if $\max \T$ 
is finite and $\rho(\min\, \T)=\min\, \T$ if there exists a finite $\min ,\ \T$.

Following \cite{novo}, for $0\leq\gamma\leq{1}$ we define the weighted space 
$C_{1-\gamma,\psi}(J,\R)$ of continuous functions $f$ on the the finite interval 
$J=[0,1]\subset\T$ by 
\begin{equation*}
C_{1-\gamma,\psi}[0,1]
=\Big\{f:(0,1]\to\R:(\psi(t)-\psi(0))^{1-\gamma}f(t)\in C([0,1])\Big\}
\end{equation*}
with the norm
\begin{equation*}
\|f\|_{C_{1-\gamma,\psi}[0,1]}=\|(\psi(t)-\psi(0))^{1-\gamma}f(t)\|_{C_{[0,1]}}.
\end{equation*}

The derivative makes use of the set $\T^\kappa$, 
which is derived from the time scale $\T$ as follows: 
if $\T$ has left-scattered maximum $M$, then 
$\T^\kappa:=\T \setminus \{M\}$; otherwise, $\T^\kappa:=\T$.

\begin{definition}[Delta derivative \cite{Agarwal,Bohner,Anastassiou}] 
Assume $f:\T\to\mathbb{R}$ and let $t\in\T^{\kappa}$. One defines
\begin{equation*}
f^{\Delta}(t)=\lim_{s\to t}\frac{f(\sigma(s))-f(t)}{\sigma(s)-t}, 
\qquad t\neq\sigma(s),
\end{equation*}
provided the limit exists. We will call $f^{\Delta}(t)$ the delta derivative 
of $f$ at $t$. Moreover, we say that $f$ is delta differentiable on $\T^{\kappa}$ provided
$f^{\Delta}(t)$ exists for all $t\in\T^{\kappa}$. The function $f^{\Delta}:\T^{\kappa}\to\mathbb{R}$ 
is then called the (delta) derivative of $f$ on $\T^{\kappa}$.
\end{definition}

\begin{definition}[See \cite{Agarwal,Bohner,Anastassiou}] 
Let $[a,b]$ denote a closed bounded interval in $\T$. A function $F:[a,b]\to\mathbb{R}$ 
is called a delta anti-derivative of function $f:[a,b)\to\mathbb{R}$ provided $F$ 
is continuous on $[a,b]$, delta differentiable on $[a,b)$, and $F^{\Delta}(t)=f(t)$ 
for all $t\in[a,b)$. Then, we define the $\Delta$-integral of $f$ from $a$ to $b$ by 
$$
\int_{a}^{b}f(t)\Delta t:=F(b)-F(a).
$$
\end{definition}

\begin{proposition}[See \cite{Agarwal,Bohner,Anastassiou}]
\label{Proposition24} 
Suppose $a,b\in\T$, $a<b$, and $f(t)$ is continuous on $[a,b]$. Then,
$$
\int_{a}^{b}f(t)\Delta t=[\sigma(a)-a]f(a)+\int_{\sigma(a)}^{b}f(t)\Delta t.
$$
\end{proposition}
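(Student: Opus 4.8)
The plan is to reduce the identity to the additivity of the delta integral together with an explicit evaluation of the integral over the single ``jump'' interval $[a,\sigma(a)]$. First I would observe that, since $f$ is continuous on $[a,b]$, it is in particular right-dense continuous and therefore possesses a delta antiderivative $F$ on $[a,b]$; by the definition of the $\Delta$-integral, $\int_{a}^{b}f(t)\,\Delta t=F(b)-F(a)$ with $F^{\Delta}(t)=f(t)$ for all $t\in[a,b)$.

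Next I would locate the splitting point. Because $a<b$ with $a,b\in\T$ and $\sigma(a)=\inf\{s\in\T:s>a\}$, the element $b$ belongs to the set defining that infimum, so $\sigma(a)\le b$; moreover $\sigma(a)\in\T$. Hence $a\le\sigma(a)\le b$, and telescoping the antiderivative gives
$$
\int_{a}^{b}f(t)\,\Delta t=F(b)-F(a)=\big(F(\sigma(a))-F(a)\big)+\big(F(b)-F(\sigma(a))\big)=\int_{a}^{\sigma(a)}f(t)\,\Delta t+\int_{\sigma(a)}^{b}f(t)\,\Delta t.
$$

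It then remains to evaluate $\int_{a}^{\sigma(a)}f(t)\,\Delta t=F(\sigma(a))-F(a)$, and here I would split into the two cases dictated by the graininess at $a$. If $a$ is right-dense, then $\sigma(a)=a$, so both $F(\sigma(a))-F(a)$ and $[\sigma(a)-a]f(a)$ vanish and the claim is immediate. If instead $a$ is right-scattered, then $\sigma(a)>a$ and the delta derivative at the right-scattered point $a$ reduces to the difference quotient
$$
f(a)=F^{\Delta}(a)=\frac{F(\sigma(a))-F(a)}{\sigma(a)-a},
$$
whence $F(\sigma(a))-F(a)=[\sigma(a)-a]\,f(a)$. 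Substituting this into the displayed splitting yields exactly $\int_{a}^{b}f(t)\,\Delta t=[\sigma(a)-a]\,f(a)+\int_{\sigma(a)}^{b}f(t)\,\Delta t$.

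The argument is essentially bookkeeping; the only genuine inputs are the existence of the antiderivative $F$, for which the continuity (hence right-dense continuity) hypothesis on $f$ is precisely what is needed, and the standard reduction of the delta derivative to a difference quotient at a right-scattered point. I would therefore expect no serious obstacle, the one point requiring care being the right-scattered case, where one must invoke the point-value formula $F^{\Delta}(a)=(F(\sigma(a))-F(a))/(\sigma(a)-a)$ rather than the limit definition directly.
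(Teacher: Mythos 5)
Your proof is correct. The paper states Proposition~\ref{Proposition24} without proof, citing the standard time-scale references, and your argument---existence of a delta antiderivative $F$ for the (hence rd-)continuous integrand, telescoping at the point $\sigma(a)\le b$ to split the integral, and the evaluation $F(\sigma(a))-F(a)=[\sigma(a)-a]\,F^{\Delta}(a)=[\sigma(a)-a]\,f(a)$ via the difference-quotient form of the delta derivative at a right-scattered point (trivially in the right-dense case)---is exactly the standard proof given in those sources.
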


\begin{proposition}[See \cite{Agarwal,Bohner,Anastassiou}] 
\label{Prop-6} 
Suppose $\T$ is a time scale and $f$ is an increasing continuous function on $[a,b]$. 
If $F$ is the extension of $f$ to the real interval $[a,b]$ given by 
$$
F(s):=
\begin{dcases}
f(s) &{\rm if} \quad s\in\T,\\
f(t) &{\rm if} \quad s\in(t,\sigma(t))\not\subset\T,
\end{dcases}
$$
then
\begin{equation*}
\int_{a}^{b}f(t)\Delta t\leq\int_{a}^{b}F(t){\rm d}t.
\end{equation*}
\end{proposition}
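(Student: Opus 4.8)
The plan is to prove the (a priori stronger-looking) inequality by a Darboux-type comparison between the lower $\Delta$-Riemann sums of $f$ and the ordinary Riemann integral of the extension $F$, exploiting monotonicity at every step.

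First I would check that the right-hand side is well defined. I claim $F$ is nondecreasing on the real interval $[a,b]$: for $s_1<s_2$ with $s_2$ lying in a gap $(t,\sigma(t))$, no point of $\T$ lies strictly inside the gap, so any $s_1\in\T$ with $s_1<s_2$ must satisfy $s_1\le t$, whence $F(s_1)=f(s_1)\le f(t)=F(s_2)$ by monotonicity of $f$; the remaining cases ($s_1,s_2$ both in $\T$, or both in the same gap) are immediate. A bounded nondecreasing function on $[a,b]$ is Riemann integrable, so $\int_a^b F(t)\,\mathrm{d}t$ exists, and note that $F(s)=f(s)$ for every $s\in\T$.

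Next, fix a partition $a=s_0<s_1<\cdots<s_n=b$ with all nodes $s_i\in\T$ (possible since $a,b\in\T$). Because $f$ is increasing and continuous, the left-endpoint $\Delta$-Riemann sum
$$
S(P)=\sum_{i=0}^{n-1} f(s_i)\,(s_{i+1}-s_i)
$$
is a lower Darboux $\Delta$-sum, and as the mesh of such partitions tends to $0$ these sums increase to $\int_a^b f(t)\,\Delta t$; here continuity of $f$ on the compact interval guarantees $\Delta$-integrability, and Proposition~\ref{Proposition24} lets me isolate and account for each jump contribution $(\sigma(s_i)-s_i)f(s_i)$ whenever a node is right-scattered. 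On the other hand, on each real subinterval $[s_i,s_{i+1}]$ monotonicity of $F$ together with $F(s_i)=f(s_i)$ gives $F(t)\ge f(s_i)$ there, so
$$
\int_{s_i}^{s_{i+1}} F(t)\,\mathrm{d}t\ \ge\ f(s_i)\,(s_{i+1}-s_i).
$$
Summing over $i$ yields $\int_a^b F(t)\,\mathrm{d}t\ge S(P)$ for every such partition $P$. Letting the mesh go to $0$, the fixed quantity $\int_a^b F\,\mathrm{d}t$ dominates every $S(P)$ while $S(P)\to\int_a^b f(t)\,\Delta t$, which gives the claim.

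I expect the main obstacle to be this limiting step on a general time scale: one must be sure the left-endpoint $\Delta$-sums really converge (monotonically) to the $\Delta$-integral even when $[a,b]$ contains infinitely many right-scattered points, possibly accumulating. This is where the hypotheses do the work — uniform continuity of $f$ on the compact set $[a,b]$ controls the oscillation on the right-dense part, the total length of the gaps is bounded by $b-a$ so the scattered contributions form a convergent series, and Proposition~\ref{Proposition24} supplies exactly the value $(\sigma(s_i)-s_i)f(s_i)$ carried by each jump, matching the contribution of the constant piece of $F$ on the corresponding gap. Once the convergence of $S(P)$ is secured, the comparison above closes the argument.
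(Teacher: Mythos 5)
The paper itself offers no proof of Proposition~\ref{Prop-6}: it is imported from the cited literature (Agarwal--Bohner, Bohner--Peterson, Anastassiou), so your argument must stand on its own --- and it essentially does. The monotonicity of $F$ on the real interval, the identity $F(s_i)=f(s_i)$ at nodes, and the per-subinterval bound $\int_{s_i}^{s_{i+1}}F(t)\,{\rm d}t\geq f(s_i)(s_{i+1}-s_i)$ are all correct, and for increasing $f$ your left-endpoint sums are precisely the lower Darboux $\Delta$-sums, since $\inf_{[s_i,s_{i+1})\cap\T}f=f(s_i)$. The one imprecise step is the limit ``as the mesh tends to $0$'': on a time scale with gaps, the mesh of any partition with nodes in $\T$ is bounded below by the largest gap length $\sigma(t)-t$ inside $[a,b]$, so no sequence of admissible partitions has mesh tending to zero, and the monotone convergence of $S(P)$ cannot be phrased that way. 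You correctly sensed this obstacle, but the remedy is simpler than the gap-by-gap bookkeeping you sketch via Proposition~\ref{Proposition24}: either work with Guseinov-type partitions $P_\delta$, in which every subinterval has length at most $\delta$ except those of the form $[s_i,\sigma(s_i)]$ (this is the notion under which Riemann $\Delta$-sums of continuous functions converge to the $\Delta$-integral), or, cleaner still, avoid limits altogether and use that the $\Delta$-integral of a $\Delta$-integrable function equals the supremum of its lower Darboux $\Delta$-sums: since you established $S(P)\leq\int_a^b F(t)\,{\rm d}t$ for \emph{every} partition $P$ with nodes in $\T$, taking the supremum over $P$ yields $\int_a^b f(t)\Delta t\leq\int_a^b F(t)\,{\rm d}t$ at once. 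As an aside, for this particular extension $F$ (constant on each gap, equal to the value at the left endpoint) one in fact has equality, because each right-scattered point $t$ contributes exactly $(\sigma(t)-t)f(t)$ to both sides; the inequality as stated is simply all that the paper needs later, e.g.\ in the estimates within the proofs of Theorems~\ref{teorema32} and~\ref{th-3.4}.
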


Let $n-1<\alpha<n$ with $n\in\mathbb{N}$, $I=[a,b]$ be an interval such that 
$-\infty\leq{a}<b\leq\infty$ and $f,\psi\in C^{n}([a,b],\mathbb{R})$ be two 
functions such that $\psi$ is increasing and $\psi'(x)\neq{0}$ for all $x\in{I}$. 
The left-sided $\psi$-Hilfer fractional derivative 
$^{{\rm H}}\mathds{D}_{a+}^{\alpha,\beta;\psi}(\cdot)$ 
of function $f$ of order $\alpha$ and type $0\leq\beta\leq{1}$ is given by 
\begin{eqnarray}
\label{1}
^{{\rm H}}\mathds{D}_{a+}^{\alpha,\beta;\psi}f(x)
=\mathds{I}_{a+}^{\beta(n-\alpha);\psi} \bigg(\frac{1}{\psi'(x)}
\frac{{\rm d}}{{\rm d}x}\bigg)^{n}\mathds{I}_{a+}^{n-\gamma;\psi}f(x) 
\end{eqnarray}
where $\mathds{I}_{a+}^{\delta,\psi}(\cdot)$ is the Riemann--Liouville fractional 
integral with respect to another function $\psi$ with $\delta=\beta(n-\alpha)$ 
or $\delta=(1-\beta)(n-\alpha)$ and $\gamma=\alpha+\beta(n-\alpha)$.

\begin{lemma}[See \cite{Sousa}]
\label{lemma3.2}
Let $Q_1$ and $Q_2$ be two bounded sets in a Banach space $X$. Then,
\begin{enumerate}
\item $\mu(Q_1)=0$ if and only if $\overline{Q}_1$ is compact.

\item $\mu(Q_1)=\mu(\overline{Q}_1).$

\item $Q_1\subset Q_2$ implies $\mu(Q_1)\leq \mu(Q_2).$

\item $\mu(Q_1+Q_2)\leq\mu(Q_1)+\mu(Q_2)$.
\end{enumerate}
\end{lemma}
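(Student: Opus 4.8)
The plan is to argue directly from the definition of the Kuratowski measure of noncompactness, namely $\mu(Q)=\inf\{\varepsilon>0: Q \text{ admits a finite cover by sets of diameter at most } \varepsilon\}$ for a bounded set $Q$ in the Banach space $X$, and to establish the four items in an order that lets each one reuse the earlier ones. I would first dispatch the monotonicity property (item~3), since it is immediate and is invoked repeatedly afterwards: if $Q_1\subset Q_2$, then every finite $\varepsilon$-cover of $Q_2$ is already a cover of $Q_1$, so the infimum defining $\mu(Q_1)$ ranges over a larger family of admissible $\varepsilon$, whence $\mu(Q_1)\le\mu(Q_2)$.

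Next I would treat item~2. One inequality, $\mu(Q_1)\le\mu(\overline{Q_1})$, follows at once from item~3 and the inclusion $Q_1\subset\overline{Q_1}$. For the reverse direction the key observation is that closure does not increase diameters: given a finite cover $\{S_i\}_{i=1}^{m}$ of $Q_1$ with $\mathrm{diam}(S_i)\le\varepsilon$, the sets $\{\overline{S_i}\}$ cover $\overline{Q_1}$ and satisfy $\mathrm{diam}(\overline{S_i})=\mathrm{diam}(S_i)\le\varepsilon$, so $\mu(\overline{Q_1})\le\mu(Q_1)$, giving equality.

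For item~1 I would route the argument through total boundedness. If $\overline{Q_1}$ is compact then it is totally bounded, so for each $\varepsilon>0$ it is covered by finitely many balls of radius $\varepsilon/2$ (hence diameter at most $\varepsilon$), which forces $\mu(\overline{Q_1})=0$ and, by item~2, $\mu(Q_1)=0$. Conversely, if $\mu(Q_1)=0$ then for every $n$ the set $Q_1$ admits a finite cover by sets of diameter below $1/n$; selecting one point from each cover element yields a finite $1/n$-net, so $Q_1$ is totally bounded, and since $X$ is complete its closure $\overline{Q_1}$ is complete and totally bounded, hence compact.

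Finally, item~4 rests on the subadditivity of the diameter under Minkowski sums, $\mathrm{diam}(A+B)\le\mathrm{diam}(A)+\mathrm{diam}(B)$, which itself follows from the triangle inequality. Given $\varepsilon>0$, I would pick finite covers $\{A_i\}$ of $Q_1$ and $\{B_j\}$ of $Q_2$ with diameters at most $\mu(Q_1)+\varepsilon$ and $\mu(Q_2)+\varepsilon$; the finitely many sets $\{A_i+B_j\}$ then cover $Q_1+Q_2$ with diameter at most $\mu(Q_1)+\mu(Q_2)+2\varepsilon$, and letting $\varepsilon\to0$ delivers the bound. The only genuinely delicate step is item~1, where completeness of the ambient Banach space $X$ is what upgrades total boundedness to compactness; the remaining items are formal consequences of the definition together with the elementary behaviour of diameters under closure and sums.
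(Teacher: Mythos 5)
The paper never proves this lemma: it is imported directly from the literature (hence the ``See \cite{Sousa}'' tag), since these are the standard properties of the Kuratowski measure of noncompactness, needed later only as input to the Sadovskii fixed point argument in the controllability proof. Your proposal therefore supplies work that the paper delegates, and it is correct. Item~3 is indeed immediate, since any finite $\varepsilon$-cover of $Q_2$ covers $Q_1$; item~2 correctly combines this with the facts that $\mathrm{diam}(\overline{S_i})=\mathrm{diam}(S_i)$ and that a finite union of closures contains the closure of the union; item~1 is the standard equivalence with total boundedness, and you rightly isolate completeness of $X$ as the ingredient that turns the totally bounded closed set $\overline{Q}_1$ into a compact one; item~4 rests on $\mathrm{diam}(A+B)\leq\mathrm{diam}(A)+\mathrm{diam}(B)$ (triangle inequality) together with the observation that the finitely many sets $A_i+B_j$ cover $Q_1+Q_2$. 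The one step worth making explicit in item~1 is that the closure of a totally bounded set is itself totally bounded; alternatively, you can avoid this entirely by first invoking your item~2 to get $\mu(\overline{Q}_1)=\mu(Q_1)=0$ and then running the finite-net argument on $\overline{Q}_1$ directly. A final presentational remark: the paper never says which measure of noncompactness $\mu$ denotes; your reading of it as the Kuratowski measure is the natural one, and all four properties (with essentially the same arguments for items~2--4) also hold for the Hausdorff ball measure, so nothing downstream in the paper depends on the choice.
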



\section{The $\psi$-Hilfer fractional derivative on times scales}
\label{sec3}

In this section we introduce the $\psi$-Hilfer fractional derivative 
on times scales. For that we begin by recalling the notion
of $\psi$-Riemann--Liouville fractional integral 
as introduced by Mekhalfi and Torres in \cite{Mekhalfi}.

\begin{definition}[See \cite{Mekhalfi}]
\label{B} 
Suppose $\T$ is a time scale, $[a,b]$ is an interval of $\T$, $f$ is an 
integrable function on $[a,b]$, and $\psi$ is monotone having a delta derivative 
$\psi^{\Delta}$ with $\psi^{\Delta}(x)\neq{0}$ for any $x\in[a,b]$. Let $0<\alpha<1$. 
Then, the $\psi$-Riemann-Liouville fractional integral on times scales of order $\alpha$ 
of function $f$ with respect to $\psi$ is defined by
\begin{eqnarray}
\label{2}
^{{\rm \T}}\mathds{I}_{a+}^{\delta,\psi}f(x)
=\frac{1}{\Gamma(\alpha)}\int_{a}^{x}\psi^{\Delta}(s)(\psi(x)-\psi(s))^{\alpha-1}f(s)\Delta s.
\end{eqnarray}
\end{definition}

Here we make use of \eqref{2} to define a version 
for (\ref{1}) on the sense of times scales.

\begin{definition} 
Let $n-1<\alpha<n$ with $n\in\N$. Suppose $\T$ is a time scale, 
$[a,b]$ is an interval of $\T$, and $f,\psi\in C^{n}([a,b])$ are 
two functions such that $\psi$ is increasing having a delta derivative 
$\psi^{\Delta}$ with $\psi^{\Delta}(x)\neq{0}$ for all $x\in[a,b]$. 
The $\psi$-Hilfer fractional derivative on time scales of order $\alpha$ 
and type $0\leq\beta\leq{1}$ is given by
\begin{eqnarray}
\label{3}
\dT f(x)={^{\T}}{\mathds{I}}_{a+}^{\beta(n-\alpha);\psi}
\bigg(\frac{\Delta}{\psi^{\Delta}(x)}\bigg)^{(n)}{^{\T}}{\mathds{I}}_{a+}^{n-\gamma;\psi}f(x)
\end{eqnarray}
with $\gamma=\alpha+\beta(n-\alpha)$.
\end{definition}

\begin{remark}
\label{remark-1:1}
If $\T=\R$, then \eqref{3} reduces to the $\psi$-Hilfer fractional derivative \eqref{1}.
\end{remark}

\begin{remark}
\label{remark-1:2}
Taking the limit $\beta\to 0$ on both sides of \eqref{3}, we obtain the
$\psi$-Riemann--Liouville fractional derivative on time scales given by
\begin{eqnarray}
\label{4}
\dga f(x)=\,\,\bigg(\frac{\Delta}{\psi^{\Delta}(x)}\bigg)^{(n)}{^{\T}}{\mathds{I}}_{a+}^{n-\alpha;\psi}f(x). 
\end{eqnarray}
\end{remark}

\begin{remark}
\label{remark-1:3}
Taking the limit $\beta\to 1$ on both sides of \eqref{3}, 
one has the $\psi$-Caputo fractional derivative on time scales given by
\begin{eqnarray}\label{5}
\dca f(x)=\,\,{^{\T}}{\mathds{I}}_{a+}^{n-\alpha;\psi}\left(
\frac{\Delta}{\psi^{\Delta}(x)}\right)^{(n)}f(x). 
\end{eqnarray}
\end{remark}

\begin{remark}
\label{remark-1:4}
Let $\T=\R$. Taking $\alpha=1$ and $\psi(x)=x$, we get the classical derivative.
\end{remark}

\begin{remark}
\label{remark-1:5:0}
Note that \eqref{3} can be written as
\begin{eqnarray*}
\dT f(x)=\,\,{^{\T}}{\mathds{I}}_{a+}^{\gamma-\alpha;\psi}\,\dg f(x)
\end{eqnarray*}
and
\begin{eqnarray*}
\dT f(x)=\,\,\dm\,{^{\T}}{\mathds{I}}_{a+}^{n-\gamma;\psi}f(x)
\end{eqnarray*}
with $\gamma=\alpha+\beta(n-\alpha)$ and $\mu=n(1-\beta)+\beta\alpha$, 
where ${^{\T}}{\mathds{I}}_{a+}^{\gamma-\alpha;\psi}(\cdot)$
and $\dg(\cdot)$ are defined by \eqref{2} and \eqref{4}, respectively.
\end{remark}

\begin{remark}
\label{remark-1:5}
With particular choices of $\psi$ 
we obtain a wide class of fractional derivatives on time scales.
For example, let us consider the $\psi$-Hilfer fractional derivative on time scales 
and function $\psi(x)={^{\T}}{\mathds{I}}_{a+}^{(1-\beta)(n-\alpha);\psi}f(x)$. In this case, we have
\begin{equation*}
\dt f(x)={^{\T}}{\mathds{I}}_{a+}^{n-\mu;\psi}\bigg(\frac{\Delta}{\psi^{\Delta}(x)}\bigg)^{(n)}\psi(x)
\end{equation*}
with $\mu=n(1-\beta)+\beta\alpha$.
\end{remark}

\begin{definition}{\rm\cite{Benaissa}} {\rm(Beta function on time scales)} 
We define the Beta function on time scales by
$B^{\T}_{a,b}(p,q)=\displaystyle
\int_{a}^{b} (s-a)^{q-1}(b-s)^{p-1}\Delta s$, for some $p,q>0$.    
\end{definition}

\begin{proposition}{\rm\cite{Benaissa}} 
The function $B^{\T}_{a,b}(p,q)$ satisfies the inequality 
$B^{\T}_{a,b}(p,q)\geq B(p,q)(b-a)^{p+q-1}$ for some $p,q>0$, 
where $B(p,q)$ is the classical Beta function.
\end{proposition}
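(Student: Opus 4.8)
The plan is to reduce the stated inequality to a comparison between a delta integral and an ordinary Riemann integral of one and the same integrand, and then to exploit the time-scale/real comparison furnished by Proposition~\ref{Prop-6}. First I would pin down the precise meaning of the right-hand side. Writing $h(s):=(s-a)^{q-1}(b-s)^{p-1}\ge 0$, the substitution $s=a+(b-a)\tau$ together with the symmetry $B(p,q)=B(q,p)$ gives
\begin{equation*}
\int_a^b h(s)\,ds=(b-a)^{p+q-1}\int_0^1 \tau^{q-1}(1-\tau)^{p-1}\,d\tau=B(p,q)\,(b-a)^{p+q-1}.
\end{equation*}
Hence the assertion $B^{\T}_{a,b}(p,q)\ge B(p,q)(b-a)^{p+q-1}$ is \emph{exactly} the claim that $\int_a^b h\,\Delta s\ge \int_a^b h\,ds$; that is, replacing the Riemann integral of the nonnegative function $h$ by its delta integral does not decrease the value.

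Next I would use the structural description of the delta integral that underlies Proposition~\ref{Prop-6}. On every right-dense portion of $[a,b]$ the delta and Riemann integrals of $h$ coincide, while on each gap $[t,\sigma(t)]$ the delta integral contributes $(\sigma(t)-t)\,h(t)$, i.e.\ it evaluates $h$ at the \emph{left} endpoint of the gap; this is precisely the content of the step extension $F$ in Proposition~\ref{Prop-6}, which is constant and equal to the left value on each gap. Consequently
\begin{equation*}
\int_a^b h\,\Delta s-\int_a^b h\,ds=\sum_{t\ \text{right-scattered}}\Big[(\sigma(t)-t)\,h(t)-\int_t^{\sigma(t)}h(s)\,ds\Big],
\end{equation*}
and the sign of each bracket is decided by whether the left-endpoint value $h(t)$ is at least the mean of $h$ over the gap, hence by the monotonicity of $h$ there. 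Where $h$ is nonincreasing one has $h(t)\ge h(s)$ on $[t,\sigma(t)]$, so each bracket is $\ge 0$ and the desired direction follows; where $h$ is nondecreasing the increasing version stated in Proposition~\ref{Prop-6} gives the opposite sign.

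The main obstacle is therefore the non-monotonicity of $h$. A direct computation of $h'(s)/h(s)=(q-1)/(s-a)-(p-1)/(b-s)$ shows that $h$ has a single interior critical point $s^{*}=\frac{(q-1)b+(p-1)a}{p+q-2}$, so Proposition~\ref{Prop-6} cannot be applied to $h$ on all of $[a,b]$ at once. My plan is to split $[a,b]=[a,s^{*}]\cup[s^{*},b]$ (using additivity of the delta integral, cf.\ Proposition~\ref{Proposition24}), apply the monotone comparison separately on the nonincreasing and the nondecreasing pieces, and recombine the two bounds. I expect the crux of the whole proposition to be exactly this recombination: the two pieces contribute gap terms of \emph{opposite} sign, so showing that the favourable contribution dominates is where the real work lies, and it is this sign bookkeeping that forces one to use the explicit exponents $p,q$ (and that ultimately delimits the range of $p,q$ for which the stated inequality is valid).
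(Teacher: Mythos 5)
A preliminary remark: the paper contains no proof of this proposition at all --- it is imported verbatim from \cite{Benaissa} --- so your argument has to stand entirely on its own merits. Its first half does. The identity $\int_a^b (s-a)^{q-1}(b-s)^{p-1}\,ds = B(p,q)(b-a)^{p+q-1}$ is correct, the reduction of the claim to the comparison $\int_a^b h\,\Delta s \ge \int_a^b h\,ds$ for $h(s)=(s-a)^{q-1}(b-s)^{p-1}$ is exactly the right way to read the statement, and so is the gap-by-gap decomposition of the difference into terms $(\sigma(t)-t)h(t)-\int_t^{\sigma(t)}h(s)\,ds$ over the right-scattered points.

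The genuine gap is the step you yourself flag as ``where the real work lies'': that step cannot be carried out, because in the regime where your splitting is needed the inequality is simply false. Whenever $q>1$ or $p<1$, the function $h$ is strictly increasing on some subinterval of $(a,b)$, and the time scale is arbitrary, so it may place a gap exactly there. Concretely, take $p,q>1$ and $\T=\{a,b\}$: by Proposition~\ref{Proposition24}, $B^{\T}_{a,b}(p,q)=(b-a)\,h(a)=(b-a)\cdot 0\cdot(b-a)^{p-1}=0$, which is strictly smaller than $B(p,q)(b-a)^{p+q-1}>0$; more generally $\T=[a,c]\cup[d,b]$ with $[c,d]$ inside the increasing piece of $h$ gives $\int_a^b h\,\Delta s<\int_a^b h\,ds$. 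Thus the unfavourable gap terms can be made the \emph{only} ones, and no bookkeeping will let the favourable contribution dominate. The inequality holds for every time scale precisely when $h$ has no increasing piece, i.e.\ when $0<q\le 1\le p$ (then both factors $(s-a)^{q-1}$ and $(b-s)^{p-1}$ are nonincreasing), and in that regime your second step already finishes the proof with no splitting at all, since every bracket $(\sigma(t)-t)h(t)-\int_t^{\sigma(t)}h(s)\,ds$ is nonnegative. This restriction on $(p,q)$ --- not a dominance argument --- is what the vague qualifier ``for some $p,q>0$'' in the statement is hiding. A minor related slip: your critical point $s^{*}=\frac{(q-1)b+(p-1)a}{p+q-2}$ lies in $(a,b)$ only when $p,q$ are on the same side of $1$; when they straddle $1$ the function $h$ is monotone and no interior critical point exists.
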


We proceed by proving several basic but fundamental properties. 

\begin{proposition} 
\label{A}  
For any integrable function $f$ on $[a,b]$, 
the $\psi$-Riemann--Liouville fractional integral on time scales satisfies
$$
{^{\T}}{\mathds{I}}_{a+}^{\alpha;\psi}\,{^{\T}}{\mathds{I}}_{a+}^{\beta;\psi}f(t)
\geq {^{\T}}{\mathds{I}}_{a+}^{\alpha+\beta;\psi}f(t)
$$
for any $\alpha,\beta>0$.
\end{proposition}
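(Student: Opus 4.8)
The plan is to expand the composition as an iterated delta integral, swap the order of integration, and identify the resulting inner integral as a Beta function on a transformed time scale, at which point the inequality stated immediately above (relating the time-scale Beta function to the classical one) delivers the claim. Throughout I would assume $f\geq 0$, since $\psi$ is increasing (so $\psi^{\Delta}\geq 0$) and the desired estimate is a lower bound; without a sign hypothesis on $f$ the pointwise estimate cannot be pushed through the outer integral, so I read the statement as being for nonnegative integrable $f$.

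First I would write, directly from Definition~\ref{B},
\[
{^{\T}}{\mathds{I}}_{a+}^{\alpha;\psi}\,{^{\T}}{\mathds{I}}_{a+}^{\beta;\psi}f(t)
=\frac{1}{\Gamma(\alpha)\Gamma(\beta)}\int_{a}^{t}\psi^{\Delta}(s)(\psi(t)-\psi(s))^{\alpha-1}
\left(\int_{a}^{s}\psi^{\Delta}(r)(\psi(s)-\psi(r))^{\beta-1}f(r)\,\Delta r\right)\Delta s .
\]
Then I would apply a Fubini-type theorem for double delta integrals to interchange $s$ and $r$, producing an outer integral in $r$ weighted by the nonnegative factor $\psi^{\Delta}(r)f(r)$ and an inner integral in $s$ of the kernel $\psi^{\Delta}(s)(\psi(t)-\psi(s))^{\alpha-1}(\psi(s)-\psi(r))^{\beta-1}$.

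Next, for fixed $r$ and $t$, I would perform the substitution $\nu=\psi(s)$, which carries the inner delta integral onto the time scale $\widetilde{\T}:=\psi(\T)$ and turns it into
\[
\int(\nu-\psi(r))^{\beta-1}(\psi(t)-\nu)^{\alpha-1}\,\widetilde{\Delta}\nu
=B^{\widetilde{\T}}_{\psi(r),\psi(t)}(\alpha,\beta),
\]
exactly the Beta function on $\widetilde{\T}$ over $[\psi(r),\psi(t)]$. Applying the Beta inequality $B^{\widetilde{\T}}_{\psi(r),\psi(t)}(\alpha,\beta)\geq B(\alpha,\beta)(\psi(t)-\psi(r))^{\alpha+\beta-1}$ and using $B(\alpha,\beta)=\Gamma(\alpha)\Gamma(\beta)/\Gamma(\alpha+\beta)$, the constants collapse and the surviving outer integral is precisely ${^{\T}}{\mathds{I}}_{a+}^{\alpha+\beta;\psi}f(t)$, which gives the stated lower bound.

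The main obstacle is the Fubini interchange itself and, more delicately, the boundary behaviour it produces: the time-scale Fubini theorem replaces the lower limit $r$ of the inner integral by $\sigma(r)$, so reconciling the resulting $\int_{\sigma(r)}^{t}$ with the Beta function (whose lower endpoint is $\psi(r)$) forces one to account, via Proposition~\ref{Proposition24}, for the diagonal contribution $[\sigma(\psi(r))-\psi(r)]$ times the kernel evaluated at $s=r$. This is genuinely delicate when $\beta<1$, where $(\psi(s)-\psi(r))^{\beta-1}$ is singular at $s=r$ and the Beta function on $\widetilde{\T}$ may diverge at a right-scattered left endpoint; one must verify that this term does not reverse the inequality. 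The inequality, rather than an equality, ultimately originates in the Beta-function-on-time-scales estimate invoked above.
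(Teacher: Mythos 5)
Your proposal follows essentially the same route as the paper's own proof: expand the composition as an iterated delta integral, interchange the order of integration by Fubini, recognize the inner kernel integral as a time-scale Beta function (the paper normalizes to $[0,1]$ via the substitution $r=(\psi(s)-\psi(\tau))/(\psi(t)-\psi(\tau))$, while you pass to the image scale $\psi(\T)$ over $[\psi(r),\psi(t)]$ --- a cosmetic difference), and conclude from the inequality $B^{\T}_{a,b}(p,q)\geq B(p,q)(b-a)^{p+q-1}$. Your two added points of care --- the nonnegativity hypothesis on $f$, which is genuinely needed to push the pointwise Beta bound through the outer integral, and the $\sigma(r)$ boundary/diagonal term arising in the time-scale Fubini theorem --- address issues that the paper's proof silently glosses over.
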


\begin{proof} 
Using \eqref{2} of Definition~\ref{B} yields
\begin{eqnarray*}
{^{\T}}{\mathds{I}}_{a+}^{\alpha;\psi}\,{^{\T}}{\mathds{I}}_{a+}^{\beta;\psi}f(t)
&=& \frac{1}{\Gamma(\alpha)}\int_{a}^{t}\psi^{\Delta}(s)(\psi(t)-\psi(s))^{\alpha-1}\Big( 
{^{\T}}{\mathds{I}}_{a+}^{\beta;\psi}h(s)\Big)\Delta s\\
&=&\frac{1}{\Gamma(\alpha)}\int_{a}^{t}\psi^{\Delta}(s)(\psi(t)-\psi(s))^{\alpha-1}\bigg(
\frac{1}{\Gamma(\beta)}\int_{a}^{s}\psi^{\Delta}(\tau)(\psi(s)-g(\tau))^{\beta-1}
f(\tau)\Delta\tau\bigg)\Delta s\\
&=&\frac{1}{\Gamma(\alpha)\Gamma(\beta)}\int_{a}^{t}
\int_{a}^{s}\psi^{\Delta}(s)(\psi(t)-\psi(s))^{\alpha-1}\psi^{\Delta}(\tau)
(\psi(s)-g(\tau))^{\beta-1}f(\tau)\Delta\tau\Delta s.
\end{eqnarray*}
Now, we interchange the order of integration from Fubini's theorem to obtain
\begin{eqnarray*}
{^{\T}}{\mathds{I}}_{a+}^{\alpha;\psi}\,{^{\T}}{\mathds{I}}_{a+}^{\beta;\psi}f(t)
=\frac{1}{\Gamma(\alpha)\Gamma(\beta)}\int_{a}^{t}\bigg(
\int_{\tau}^{t}\psi^{\Delta}(s)(\psi(t)-\psi(s))^{\alpha-1}
\psi^{\Delta}(\tau)(\psi(s)-g(\tau))^{\beta-1}\Delta s\bigg)f(\tau)\Delta\tau.
\end{eqnarray*}

Making the change of variable $r=\dfrac{\psi(s)-g(\tau)}{\psi(t)-g(\tau)}$, $r\in\R$,
we have ${\rm \Delta}r=\dfrac{\psi^{\Delta}(s)}{\psi(t)-\psi(\tau)}\Delta s$; 
when $s\to\tau$ one has $r\to 0$; and when $s\to t$ one has $r\to 1$. Hence,
\begin{equation*}
\begin{split}
{^{\T}}{\mathds{I}}_{a+}^{\alpha;\psi}&\,{^{\T}}{\mathds{I}}_{a+}^{\beta;\psi}f(t)\\
&=\frac{1}{\Gamma(\alpha)\Gamma(\beta)}\int_{a}^{t}\bigg[\int_{\tau}^{t}\psi^{\Delta}(s)
\bigg(1-\frac{\psi(s)-g(\tau)}{\psi(t)-g(\tau)}\bigg)^{\alpha-1}(\psi(t)-g(\tau))^{\alpha-1}
\psi^{\Delta}(\tau)(\psi(s)-g(\tau))^{\beta-1}\Delta s\bigg]f(\tau)\Delta\tau\\
&=\frac{1}{\Gamma(\alpha)\Gamma(\beta)}\int_{a}^{t}\bigg[\int_{0}^{1}(1-r)^{\alpha-1}
(\psi(t)-g(\tau))^{\alpha-1}\psi^{\Delta}(\tau)(\psi(t)-g(\tau))r^{\beta-1}
(\psi(t)-g(\tau))^{\beta-1}{\rm \Delta}r]f(\tau)\Delta\tau\\
&=\frac{1}{\Gamma(\alpha)\Gamma(\beta)}\int_{0}^{1}(1-r)^{\alpha-1}r^{\beta-1}{\rm \Delta}r
\int_{a}^{t}(\psi(t)-g(\tau))^{\alpha+\beta-1}\psi^{\Delta}(\tau)f(\tau)\Delta\tau\\
&=\frac{B^{\T}_{0,1}(\alpha,\beta)}{\Gamma(\alpha)\Gamma(\beta)}
\int_{a}^{t}(\psi(t)-g(\tau))^{\alpha+\beta-1}\psi^{\Delta}(\tau)f(\tau)\Delta\tau\\
&\geq\frac{1}{\Gamma(\alpha+\beta)}\int_{a}^{t}
(\psi(t)-g(\tau))^{\alpha+\beta-1}\psi^{\Delta}(\tau)f(\tau)\Delta\tau.
\end{split}
\end{equation*}
The proof is complete.
\end{proof}

\begin{remark}
\label{remarknovo} 
Is it possible to obtain a function $g^{\T}(\alpha,\beta)$ such that 
${^{\T}}{\mathds{I}}_{a+}^{\alpha;\psi }\,{^{\T}}{\mathds{I}}_{a+}^{\beta;\psi}f(t)
= g^{\T}(\alpha,\beta) {^{\T}}{\mathds{I}}_{a+}^{\alpha+\beta;\psi}f(t)$? 
The answer is yes. Indeed, assuming the first part of the proof of Theorem~\ref{A}, we have
\begin{eqnarray*}
{^{\T}}{\mathds{I}}_{a+}^{\alpha;\psi}\,{^{\T}}{\mathds{I}}_{a+}^{\beta;\psi}f(t)
&&=\frac{B^{\T}_{0,1}(\alpha,\beta)}{\Gamma(\alpha)\Gamma(\beta)}
\int_{a}^{t}(\psi(t)-g(\tau))^{\alpha+\beta-1}\psi^{\Delta}(\tau)f(\tau)\Delta\tau\notag\\
&&=\frac{B^{\T}_{0,1}(\alpha,\beta)}{B(\alpha,\beta)} 
\frac{1}{\Gamma(\alpha+\beta)}\int_{a}^{t}
(\psi(t)-g(\tau))^{\alpha+\beta-1}\psi^{\Delta}(\tau)f(\tau)\Delta\tau\notag\\
&&= g^{\T}(\alpha,\beta) {^{\T}}{\mathds{I}}_{a+}^{\alpha+\beta;\psi}f(t),
\end{eqnarray*}
where $B(\alpha,\beta)$ is the classical Beta function and 
$g^{\T}(\alpha,\beta):=\dfrac{B^{\T}_{0,1}(\alpha,\beta)}{B(\alpha,\beta)}$.
\end{remark}

As a consequence of Remark~\ref{remarknovo} above, taking $\T=\mathbb{R}$, we have
${\mathds{I}}_{a+}^{\alpha;\psi}\,\,{\mathds{I}}_{a+}^{\beta;\psi}f(t)
= {\mathds{I}}_{a+}^{\alpha+\beta;\psi}f(t)$.

\begin{lemma}
\label{lemma-4} 
Let $n-1\leq\gamma<n$ and $t\in C_{\gamma}[a,b]$. 
Then, $\displaystyle  {^{\T}}{\mathds{I}}_{a+}^{\alpha;\psi}f(a)
=\lim_{x\to a+}{^{\T}}{\mathds{I}}_{a+}^{\alpha;\psi}f(x)=0$, $n-1\leq\gamma<\alpha$.
\end{lemma}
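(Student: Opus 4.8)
The plan is to turn membership of $f$ in the weighted space $C_{\gamma}[a,b]$ into a pointwise decay bound and then reduce the resulting integral to a time–scale Beta integral, exactly along the lines of the proof of Proposition~\ref{A}. First I would record the elementary consequence of $f\in C_{\gamma}[a,b]$: by definition $(\psi(s)-\psi(a))^{\gamma}f(s)$ extends continuously to the compact set $[a,b]$ and is therefore bounded, so that, writing $K$ for its supremum norm, one has the pointwise estimate $|f(s)|\le K\,(\psi(s)-\psi(a))^{-\gamma}$ for all $s\in(a,b]$.

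Inserting this bound into the definition \eqref{2} gives
\begin{equation*}
\big|{^{\T}}{\mathds{I}}_{a+}^{\alpha;\psi}f(x)\big|
\le \frac{K}{\Gamma(\alpha)}\int_{a}^{x}\psi^{\Delta}(s)\,(\psi(x)-\psi(s))^{\alpha-1}(\psi(s)-\psi(a))^{-\gamma}\,\Delta s .
\end{equation*}
I would then perform the same change of variable as in Proposition~\ref{A}, namely $r=\dfrac{\psi(s)-\psi(a)}{\psi(x)-\psi(a)}$, under which $\psi(x)-\psi(s)=(\psi(x)-\psi(a))(1-r)$, $(\psi(s)-\psi(a))^{-\gamma}=r^{-\gamma}(\psi(x)-\psi(a))^{-\gamma}$ and $\psi^{\Delta}(s)\,\Delta s=(\psi(x)-\psi(a))\,\Delta r$, with $r\to 0$ as $s\to a$ and $r\to 1$ as $s\to x$. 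Collecting all powers of $\psi(x)-\psi(a)$, the estimate becomes
\begin{equation*}
\big|{^{\T}}{\mathds{I}}_{a+}^{\alpha;\psi}f(x)\big|
\le \frac{K}{\Gamma(\alpha)}\,(\psi(x)-\psi(a))^{\alpha-\gamma}\int_{0}^{1}(1-r)^{\alpha-1}r^{-\gamma}\,\Delta r
=\frac{K\,B^{\T}_{0,1}(\alpha,1-\gamma)}{\Gamma(\alpha)}\,(\psi(x)-\psi(a))^{\alpha-\gamma}.
\end{equation*}

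To finish I would let $x\to a+$. Since $\psi$ is increasing and continuous, $\psi(x)-\psi(a)\to 0$, and because the hypothesis $\gamma<\alpha$ makes the exponent $\alpha-\gamma$ strictly positive while the constant $B^{\T}_{0,1}(\alpha,1-\gamma)$ is finite, the right–hand side tends to $0$; hence $\lim_{x\to a+}{^{\T}}{\mathds{I}}_{a+}^{\alpha;\psi}f(x)=0$, which is the claim. The step I expect to be the main obstacle is the rigorous justification of the change of variables and of its identification with the time–scale Beta function $B^{\T}_{0,1}(\alpha,1-\gamma)$ on an arbitrary $\T$, since substitutions inside a delta integral are delicate; this is, however, precisely the manipulation already carried out in Proposition~\ref{A}, so I would invoke it rather than redo it. A secondary point to check is the finiteness of $B^{\T}_{0,1}(\alpha,1-\gamma)$, i.e.\ the integrability of the singularity $r^{-\gamma}$ at $r=0$: this needs $1-\gamma>0$, which is guaranteed here by $\gamma<\alpha<1$.
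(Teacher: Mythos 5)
Your proof is correct and follows essentially the same route as the paper: both turn $f\in C_{\gamma}[a,b]$ into the pointwise bound $|f(s)|\le K(\psi(s)-\psi(a))^{-\gamma}$, apply ${^{\T}}{\mathds{I}}_{a+}^{\alpha;\psi}$ to obtain a bound proportional to $(\psi(x)-\psi(a))^{\alpha-\gamma}$, and conclude from $\gamma<\alpha$ that this tends to $0$ as $x\to a+$. The only difference is in execution: the paper cites the power rule ${^{\T}}{\mathds{I}}_{a+}^{\alpha;\psi}(\psi(x)-\psi(a))^{-\gamma}= \frac{\Gamma(n-\gamma)}{\Gamma(\alpha+n-\gamma)}(\psi(x)-\psi(a))^{\alpha-\gamma}$ directly, whereas you re-derive it through the substitution of Proposition~\ref{A} and absorb the time-scale Beta value $B^{\T}_{0,1}(\alpha,1-\gamma)$ into the constant, which is, if anything, slightly more careful on an arbitrary time scale.
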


\begin{proof} 
First, we have that 
${^{\T}}{\mathds{I}}_{a+}^{\alpha;\psi}f(x)\in C_{\gamma}[a,b]$ is bounded. 
Since $f\in C_{\gamma}[a,b]$, then $(\psi(x)-\psi(a))^{\gamma}f(x)$ is continuous 
on $[a,b]$ and thus
\begin{eqnarray}
\label{*}
\Big|(\psi(x)-\psi(a))^{\gamma}f(x)\Big|< M 
\quad \Rightarrow 
\quad |f(x)|<\Big|(\psi(x)-\psi(a))^{-\gamma}\Big| M,
\end{eqnarray}
$x\in[a,b]$, for some positive constant $M$. Taking 
${^{\T}}{\mathds{I}}_{a+}^{\alpha;\psi}(\cdot)$ on both sides of \eqref{*} yields
\begin{eqnarray*}
\Big|{^{\T}}{\mathds{I}}_{a+}^{\alpha;\psi}f(x)\Big|
&<&\Big|{^{\T}}{\mathds{I}}_{a+}^{\alpha;\psi}
(\psi(x)-\psi(a))^{-\gamma}\Big|M\\
&=&M\frac{\Gamma(n-\gamma)}{\Gamma(\alpha+n-\gamma)}
(\psi(x)-\psi(a))^{\alpha-\gamma}.
\end{eqnarray*}
Since $\gamma<\alpha$, the right-sided tends to $0$ as $x$ tends to $a+$. Therefore, one has
$$
{^{\T}}{\mathds{I}}_{a+}^{\alpha;\psi}f(a)=\lim_{x\to a+}{^{\T}}{\mathds{I}}_{a+}^{\alpha;\psi}f(x)=0
$$
and the result is proved.
\end{proof}

\begin{theorem} 
Let $0<\alpha<1$ and $\psi$ be monotone having a delta derivative 
$\psi^{\Delta}$ with $\psi^{\Delta}(x)\neq{0}$ for all $x\in[a,b]$. 
The $\psi$-Riemann--Liouville fractional integral on time scales 
is a bounded operator given by
\begin{equation*}
\Big\| {^{\T}}{\mathds{I}}_{a+}^{\alpha;\psi} 
f\Big\|_{C_\gamma,\psi}\leq {L}\big\|f\big\|_{C_\gamma,\psi}
\end{equation*}
with $\displaystyle {L}=\frac{(\psi(b)-\psi(a))^{\alpha}}{\Gamma(\alpha+1)}$.
\end{theorem}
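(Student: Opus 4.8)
The plan is to estimate the operator directly from its defining formula \eqref{2}, reduce the bound to a single geometric ``kernel integral'' on the time scale, and then control that integral by comparison with its classical Riemann counterpart via Proposition~\ref{Prop-6}. First I would start from
$$
{^{\T}}{\mathds{I}}_{a+}^{\alpha;\psi}f(x)
=\frac{1}{\Gamma(\alpha)}\int_{a}^{x}\psi^{\Delta}(s)(\psi(x)-\psi(s))^{\alpha-1}f(s)\,\Delta s,
$$
take absolute values inside the $\Delta$-integral, and pull out the norm of $f$ using $|f(s)|\leq\|f\|_{C_\gamma,\psi}$. Since the displayed constant $L$ carries no dependence on $\gamma$, the estimate is read in the unweighted ($\gamma=0$) norm; for $\gamma>0$ the very same substitution below produces a factor $B^{\T}_{0,1}(\alpha,1-\gamma)$ that merely rescales the constant. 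This leaves the purely geometric quantity $K(x):=\int_{a}^{x}\psi^{\Delta}(s)(\psi(x)-\psi(s))^{\alpha-1}\Delta s$, and $|{^{\T}}{\mathds{I}}_{a+}^{\alpha;\psi}f(x)|\leq \Gamma(\alpha)^{-1}\|f\|_{C_\gamma,\psi}\,K(x)$.

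The heart of the argument is to show $K(x)\leq (\psi(x)-\psi(a))^{\alpha}/\alpha$. I would run the same change of variables already used in the proof of Proposition~\ref{A}, namely $r=\frac{\psi(s)-\psi(a)}{\psi(x)-\psi(a)}$, under which $\psi^{\Delta}(s)\,\Delta s=(\psi(x)-\psi(a))\,\Delta r$ and $(\psi(x)-\psi(s))^{\alpha-1}=(\psi(x)-\psi(a))^{\alpha-1}(1-r)^{\alpha-1}$. This collapses the kernel to $K(x)=(\psi(x)-\psi(a))^{\alpha}\int_{0}^{1}(1-r)^{\alpha-1}\Delta r=(\psi(x)-\psi(a))^{\alpha}\,B^{\T}_{0,1}(\alpha,1)$, recognising the time-scale Beta function with $p=\alpha$, $q=1$. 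Because $0<\alpha<1$, the integrand $(1-r)^{\alpha-1}$ is increasing on $[0,1)$, so Proposition~\ref{Prop-6} applies and bounds the $\Delta$-integral above by its Riemann extension, giving $B^{\T}_{0,1}(\alpha,1)\leq\int_{0}^{1}(1-r)^{\alpha-1}\,{\rm d}r=1/\alpha$. Hence $K(x)\leq(\psi(x)-\psi(a))^{\alpha}/\alpha$, whence $|{^{\T}}{\mathds{I}}_{a+}^{\alpha;\psi}f(x)|\leq(\psi(x)-\psi(a))^{\alpha}\|f\|_{C_\gamma,\psi}/\Gamma(\alpha+1)$.

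To finish, I would take the supremum over $x\in[a,b]$: since $\psi$ is increasing, $(\psi(x)-\psi(a))^{\alpha}\leq(\psi(b)-\psi(a))^{\alpha}$, which produces exactly the asserted operator bound with $L=(\psi(b)-\psi(a))^{\alpha}/\Gamma(\alpha+1)$.

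I expect the computation of $K(x)$ to be the main obstacle. On a general time scale the fundamental theorem of calculus and the power rule fail, so $K(x)$ cannot be integrated in closed form as it would be on $\R$; the monotone integrand $(1-r)^{\alpha-1}$ obtained after the substitution is precisely what makes Proposition~\ref{Prop-6} applicable and fixes the \emph{correct} direction of the comparison. It is worth stressing that the stated Beta inequality $B^{\T}_{0,1}(\alpha,1)\geq B(\alpha,1)$ points the wrong way for an upper estimate, so Proposition~\ref{Prop-6} is the tool that must be invoked here. A minor point to check is the mild improper behaviour of $(1-r)^{\alpha-1}$ at $r=1$, which is harmless since the exponent $\alpha-1\in(-1,0)$ keeps the limiting Riemann integral finite.
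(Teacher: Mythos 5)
Your proposal is correct and follows essentially the same route as the paper's proof: both pull $\|f\|_{C_{\gamma,\psi}}$ out of the defining integral, bound the remaining kernel integral $\int_a^x\psi^{\Delta}(s)(\psi(x)-\psi(s))^{\alpha-1}\Delta s$ by its classical value $(\psi(x)-\psi(a))^{\alpha}/\alpha$ via the comparison in Proposition~\ref{Prop-6}, and then take the supremum over $x\in[a,b]$ to obtain $L=(\psi(b)-\psi(a))^{\alpha}/\Gamma(\alpha+1)$. Your explicit substitution $r=(\psi(s)-\psi(a))/(\psi(x)-\psi(a))$, which leaves the genuinely increasing integrand $(1-r)^{\alpha-1}$ so that Proposition~\ref{Prop-6} applies cleanly, and your remark on how the weight $\gamma$ would rescale the constant, are refinements of details the paper glosses over rather than a different argument.
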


\begin{proof} 
From \eqref{2} of Definition~\ref{B} and using Proposition~\ref{Prop-6}, it follows that
\begin{eqnarray*}
\Big\|{^{\T}}{\mathds{I}}_{a+}^{\alpha;\psi} f\Big\|_{C_\gamma,\psi}
&=&\max_{x\in[a,b]}\Big|(\psi(x)-\psi(a))^{\gamma}\,{^{\T}}{\mathds{I}}_{a+}^{\alpha;\psi} f(x)\Big|\\
&=&\max_{x\in[a,b]}\bigg|(\psi(x)-\psi(a))^{\gamma}\frac{1}{\Gamma(\alpha)}\int_{a}^{x}
\psi^{\Delta}(s)(\psi(t)-\psi(s))^{\alpha-1}f(s)\Delta s\bigg|\\
&\leq &\|f\|_{C_{\gamma,\psi}}\bigg|\frac{1}{\Gamma(\alpha)}\int_{a}^{x}
\psi^{\Delta}(s)(\psi(t)-\psi(s))^{\alpha-1}\Delta s\bigg|\\
&\leq &\frac{\|f\|_{C_{\gamma,\psi}}}{\Gamma(\alpha)}\frac{(\psi(b)-\psi(a))^{\alpha}}{\alpha}\\
&=&\frac{(\psi(b)-\psi(a))^{\alpha}}{\Gamma(\alpha+1)}\|f\|_{C_{\gamma,\psi}}\\
&=&{L}\|f\|_{C_{\gamma,\psi}},
\end{eqnarray*}
where $\displaystyle {L}=\frac{(\psi(b)-\psi(a))^{\alpha}}{\Gamma(\alpha+1)}$.
\end{proof}

\begin{lemma}
\label{lemma1} 
Let $\T$ be a time scale, $(a,b]$ with $-\infty\leq{a}<b\leq\infty$ 
be an interval in the real line, $\alpha>0$, and $\psi(x)$ be a monotone 
increasing and positive function in $\Delta$ sense whose derivative 
is continuous in $(a,b]$. Then,
\begin{eqnarray}
\label{8}
{^{\T}}{\mathds{I}}_{a+}^{\alpha;\psi}f(x)
=\sum_{n=0}^{\infty}\binom{-\alpha}{n}{f^{\Delta}}^{(n)}(x) 
\frac{(\psi(x)-\psi(a))^{\alpha+n}}{\Gamma(\alpha+n+1)} 
\end{eqnarray}
where ${f^{\Delta}}^{(n)}(\cdot)$ is the $n$th
derivative on the time scale $\T$ and $x>a$.
\end{lemma}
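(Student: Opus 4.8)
The plan is to reduce the time‑scale $\psi$‑integral to an expansion about the upper endpoint $x$ and then integrate a Taylor‑type series term by term. First I would expand $f$ in powers of $\psi(s)-\psi(x)$, writing $f(s)=\sum_{n=0}^{\infty}\frac{{f^{\Delta}}^{(n)}(x)}{n!}(\psi(s)-\psi(x))^{n}$; equivalently, after the substitution $u=\psi(s)$ this is the ordinary Taylor series of $f\circ\psi^{-1}$ about $\psi(x)$, which turns ${^{\T}}{\mathds{I}}_{a+}^{\alpha;\psi}$ into a classical Riemann--Liouville integral with lower limit $\psi(a)$. Inserting this expansion into the defining formula \eqref{2} of Definition~\ref{B} and interchanging the summation with the $\Delta$‑integral gives
\[
{^{\T}}{\mathds{I}}_{a+}^{\alpha;\psi}f(x)
=\sum_{n=0}^{\infty}\frac{(-1)^{n}{f^{\Delta}}^{(n)}(x)}{n!\,\Gamma(\alpha)}
\int_{a}^{x}\psi^{\Delta}(s)\,(\psi(x)-\psi(s))^{\alpha+n-1}\,\Delta s,
\]
where I used $(\psi(s)-\psi(x))^{n}=(-1)^{n}(\psi(x)-\psi(s))^{n}$.

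Second, I would evaluate the remaining kernel integral exactly as in the proof of Proposition~\ref{A}: the substitution $r=\frac{\psi(s)-\psi(a)}{\psi(x)-\psi(a)}$ gives $\Delta r=\frac{\psi^{\Delta}(s)}{\psi(x)-\psi(a)}\Delta s$, while $s\to a,\,x$ send $r\to 0,\,1$, so that $\int_{a}^{x}\psi^{\Delta}(s)(\psi(x)-\psi(s))^{\alpha+n-1}\Delta s=(\psi(x)-\psi(a))^{\alpha+n}\,B^{\T}_{0,1}(\alpha+n,1)$, which equals $\frac{(\psi(x)-\psi(a))^{\alpha+n}}{\alpha+n}$ in the classical limit. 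Substituting back, the $n$th coefficient becomes $\frac{(-1)^{n}}{n!\,\Gamma(\alpha)\,(\alpha+n)}$, and it remains to match this with $\frac{1}{\Gamma(\alpha+n+1)}\binom{-\alpha}{n}$. This is the elementary identity $\binom{-\alpha}{n}=\frac{(-1)^{n}\Gamma(\alpha+n)}{n!\,\Gamma(\alpha)}$ combined with $\Gamma(\alpha+n+1)=(\alpha+n)\Gamma(\alpha+n)$, which yields $\frac{\binom{-\alpha}{n}}{\Gamma(\alpha+n+1)}=\frac{(-1)^{n}}{n!\,\Gamma(\alpha)\,(\alpha+n)}$ and hence \eqref{8}.

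The hard part will be the rigorous justification on a genuine time scale, where two issues arise. The first is the validity of the expansion $f(s)=\sum_{n}\frac{{f^{\Delta}}^{(n)}(x)}{n!}(\psi(s)-\psi(x))^{n}$ and its uniform convergence on the compact interval $[a,x]$, which is what legitimizes exchanging the infinite sum with the $\Delta$‑integral; on a general $\T$ the natural Taylor building blocks are the generalized monomials rather than the powers $(\psi(s)-\psi(x))^{n}$, and the two forms coincide cleanly only after passing to the $\psi$‑image. The second is the exact evaluation of the kernel integral, which by the Beta estimate following Definition~\ref{B} is in general the time‑scale Beta value $B^{\T}_{0,1}(\alpha+n,1)$ and collapses to the elementary $\frac{1}{\alpha+n}$ only when $\T=\R$ (otherwise carrying a correction factor in the spirit of Remark~\ref{remarknovo}). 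I would therefore run the argument on the real‑line image under $u=\psi(s)$, where the analytic Taylor theorem and dominated convergence justify every interchange, and then transport the identity back, reading the derivatives ${f^{\Delta}}^{(n)}$ in \eqref{8} through this substitution; the monotonicity and continuity of $\psi^{\Delta}$ assumed in the hypotheses are exactly what is needed to carry out the change of variables.
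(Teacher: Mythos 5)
Your proposal follows essentially the same route as the paper's own proof: the same formal expansion $f(s)=\sum_{n=0}^{\infty}\frac{{f^{\Delta}}^{(n)}(x)}{n!}(\psi(s)-\psi(x))^{n}$, termwise application of ${^{\T}}{\mathds{I}}_{a+}^{\alpha;\psi}$ with the factor $(-1)^{n}$ extracted, the classical evaluation of the kernel integral, and the same identity $\binom{-\alpha}{n}=\frac{(-1)^{n}\Gamma(\alpha+n)}{\Gamma(\alpha)\Gamma(n+1)}$ to assemble \eqref{8}. The two caveats you flag are genuine, but note that the paper does not address them either: its proof inserts the power-series expansion and evaluates $\int_{a}^{x}\psi^{\Delta}(t)(\psi(x)-\psi(t))^{\alpha+n-1}\Delta t$ as the classical value $\frac{\Gamma(\alpha+n)}{\Gamma(\alpha+n+1)}(\psi(x)-\psi(a))^{\alpha+n}$ without justification — precisely the points where, as you observe, a general time scale would demand a generalized-monomial Taylor theorem and would introduce a Beta-type correction factor (and where your proposed remedy of transporting to the real line also needs care, since $\psi(\T)$ is again only a time scale, not a real interval).
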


\begin{proof} 
Let us consider $f$ as follows:
\begin{eqnarray}
\label{9}
f(t)=\sum_{n=0}^{\infty}\frac{{f^{\Delta}}^{(n)}(x)}{n!}(\psi(t)-\psi(x))^{n}.
\end{eqnarray}
Taking ${^{\T}}{\mathds{I}}_{a+}^{\alpha;\psi}$ on both sides of (\ref{9}) yields
\begin{eqnarray*}
{^{\T}}{\mathds{I}}_{a+}^{\alpha;\psi}f(x)
&=&\frac{1}{\Gamma(\alpha)}\int_{a}^{x} \psi^{\Delta}(t)(\psi(x)-\psi(t))^{\alpha-1}f(t)\Delta t\\
&=&\frac{1}{\Gamma(\alpha)}\int_{a}^{x}\psi^{\Delta}(t)(\psi(x)-\psi(t))^{\alpha-1}
\Bigg(\sum_{n=0}^{\infty}\frac{{f^{\Delta}}^{(n)}(x)}{n!}(\psi(t)-\psi(x))^{n}\Bigg)\Delta t\\
&=&\sum_{n=0}^{\infty}\frac{{f^{\Delta}}^{(n)}(x)}{n!}
\frac{(-1)^n}{\Gamma(\alpha)}\int_{a}^{x}\psi^{\Delta}(t)(\psi(x)-\psi(t))^{\alpha+n-1}\Delta t\\
&=&\sum_{n=0}^{\infty}\frac{{f^{\Delta}}^{(n)}(x)}{n!}\frac{(-1)^n}{\Gamma(\alpha)}
\frac{(\psi(x)-\psi(a))^{\alpha+n}}{\Gamma(\alpha+n+1)}\Gamma(\alpha+n).
\end{eqnarray*}
Taking into account the identity
\begin{equation*}
\binom{\alpha}{n}=\frac{(-1)^{n}\alpha\Gamma(n-\alpha)}{\Gamma(1-\alpha)\Gamma(n+1)},
\end{equation*}
we have
\begin{equation*}
\binom{-\alpha}{n}=\frac{(-1)^{n-1}(-\alpha)\Gamma(\alpha+n)}{\Gamma(1+\alpha)\Gamma(n+1)}
= \frac{(-1)^{n}\Gamma(\alpha+n)}{\Gamma(\alpha)\Gamma(n+1)}.
\end{equation*}
We conclude that
\begin{equation*}
{^{\T}}{\mathds{I}}_{a+}^{\alpha;\psi}f(x)
=\sum_{n=0}^{\infty}\binom{-\alpha}{n}{f^{\Delta}}^{(n)}(x) 
\frac{(\psi(x)-\psi(a))^{\alpha+n}}{\Gamma(\alpha+n+1)}
\end{equation*}
and the proof is complete.
\end{proof}

Now, we present the Leibniz rule associated with the $\psi$-Riemann--Liouville 
fractional integral on time scales.

\begin{theorem}
\label{A*} 
Let $\T$ be a time scale, $(a,b]$ with $-\infty\leq{a}<b\leq\infty$ be an interval in the real line, 
$\alpha>0$, and $\psi(x)$ be a monotone increasing and positive function in $\Delta$ sense
whose derivative is continuous in $(a,b]$. The left fractional integral of the
product of two functions is given by
\begin{equation*}
{^{\T}}{\mathds{I}}_{a+}^{\alpha;\psi}(fh)(x)
=\sum_{k=0}^{\infty}{f^{\Delta}}^{(k)}(x) {^{\T}}{\mathds{I}}_{a+}^{\alpha;\psi} h(x),
\end{equation*}
where ${f^{\Delta}}^{(k)}$ is the $k$th derivative on the time scale $\T$ and $x>a$.
\end{theorem}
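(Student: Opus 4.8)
The plan is to follow the same strategy used in the proof of Lemma~\ref{lemma1}, now applied to the product $fh$ rather than to a single function. First I would return to the definition \eqref{2} and write
\begin{equation*}
{^{\T}}{\mathds{I}}_{a+}^{\alpha;\psi}(fh)(x)
=\frac{1}{\Gamma(\alpha)}\int_{a}^{x}\psi^{\Delta}(t)(\psi(x)-\psi(t))^{\alpha-1}f(t)\,h(t)\,\Delta t .
\end{equation*}
Then I would expand the factor $f$ about the point $x$ exactly as in \eqref{9}, namely
\begin{equation*}
f(t)=\sum_{k=0}^{\infty}\frac{{f^{\Delta}}^{(k)}(x)}{k!}\,(\psi(t)-\psi(x))^{k},
\end{equation*}
so that each coefficient ${f^{\Delta}}^{(k)}(x)$ is constant with respect to the integration variable $t$ and may be pulled outside the integral.

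The key step is to insert this expansion into the integral and interchange summation with the $\Delta$-integral. Each resulting term factors as ${f^{\Delta}}^{(k)}(x)$ times an integral of $\psi^{\Delta}(t)(\psi(x)-\psi(t))^{\alpha-1}$ against $h$ weighted by a power $(\psi(t)-\psi(x))^{k}$. Recognizing these integrals as $\psi$-Riemann--Liouville fractional integrals of $h$ (after reorganizing the powers of $(\psi(x)-\psi(t))$ and invoking the binomial identity $\binom{-\alpha}{k}=(-1)^{k}\Gamma(\alpha+k)/(\Gamma(\alpha)\Gamma(k+1))$ already recorded in the proof of Lemma~\ref{lemma1}) lets me collect all contributions into a series whose $k$-th summand carries the derivative ${f^{\Delta}}^{(k)}(x)$ and the operator ${^{\T}}{\mathds{I}}_{a+}^{\alpha;\psi}$ acting on $h$, which is precisely the announced formula. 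Reindexing and the same Gamma-function bookkeeping as in Lemma~\ref{lemma1} then close the computation.

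The delicate point — and the one I expect to be the real obstacle — is the justification of the term-by-term integration. I would need the Taylor-type series for $f$ to converge uniformly on the compact interval $[a,x]\subseteq\T$, so that interchanging the infinite sum with the $\Delta$-integral is legitimate; this is exactly where the standing hypotheses that $\psi$ be monotone increasing with continuous delta derivative on $(a,b]$ and that $f$ be sufficiently smooth in the delta sense are used. A secondary technical issue is controlling the $\Delta$-integral of the kernel against the real integral when passing bounds, for which Proposition~\ref{Prop-6} is available. Once the uniform convergence is secured, the rest is routine and the Leibniz formula follows by collecting terms.
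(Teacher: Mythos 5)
Your method is sound, but it is a genuinely different route from the one the paper takes. The paper proves Theorem~\ref{A*} by applying Lemma~\ref{lemma1} to the product $fh$ as a whole, writing
${^{\T}}{\mathds{I}}_{a+}^{\alpha;\psi}(fh)(x)=\sum_{m=0}^{\infty}\binom{-\alpha}{m}{(fh)^{\Delta}}^{(m)}(x)\frac{(\psi(x)-\psi(a))^{\alpha+m}}{\Gamma(\alpha+m+1)}$,
then expanding ${(fh)^{\Delta}}^{(m)}$ by the Leibniz rule for higher-order delta derivatives, swapping the double sum, reindexing with $n=m-k$, invoking the identity $\binom{-\alpha}{n+k}\binom{n+k}{k}=\binom{-\alpha}{k}\binom{-(\alpha+k)}{n}$, and finally recognizing the inner series, via Lemma~\ref{lemma1} again, as ${^{\T}}{\mathds{I}}_{a+}^{\alpha+k;\psi}h(x)$. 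You instead expand only $f$ inside the defining integral and keep $h$ intact, so that after one interchange of sum and $\Delta$-integral each term is already a constant multiple of a fractional integral of $h$. Your route is shorter and eliminates the combinatorial bookkeeping entirely (no product rule for ${(fh)^{\Delta}}^{(m)}$, no double-sum reindexing, no coefficient identity beyond rewriting $(-1)^{k}\Gamma(\alpha+k)/(\Gamma(\alpha)\,k!)$ as $\binom{-\alpha}{k}$). It is also slightly more general: you only need the Taylor-type expansion and smoothness of $f$, with $h$ merely integrable, whereas the paper's argument needs $h$ itself to be smooth and expandable, since it differentiates $h$ repeatedly and applies Lemma~\ref{lemma1} to $h$ in the last step. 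Both proofs share the same unaddressed analytic issue, namely the justification of term-by-term integration of the series against the kernel; you at least name it explicitly, which the paper does not.

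One correction to your concluding sentence: carried out carefully, your computation gives, for the $k$-th term,
\begin{equation*}
\frac{{f^{\Delta}}^{(k)}(x)}{k!}\,\frac{(-1)^{k}}{\Gamma(\alpha)}\int_{a}^{x}
\psi^{\Delta}(t)(\psi(x)-\psi(t))^{\alpha+k-1}h(t)\,\Delta t
=\binom{-\alpha}{k}\,{f^{\Delta}}^{(k)}(x)\;{^{\T}}{\mathds{I}}_{a+}^{\alpha+k;\psi}h(x),
\end{equation*}
so the series you obtain is
\begin{equation*}
{^{\T}}{\mathds{I}}_{a+}^{\alpha;\psi}(fh)(x)
=\sum_{k=0}^{\infty}\binom{-\alpha}{k}\,{f^{\Delta}}^{(k)}(x)\;{^{\T}}{\mathds{I}}_{a+}^{\alpha+k;\psi}h(x),
\end{equation*}
with the coefficient $\binom{-\alpha}{k}$ and the fractional integral of order $\alpha+k$, not $\alpha$. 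This is exactly the formula the paper's own proof ends with; the display in the theorem statement (no binomial coefficient, fixed order $\alpha$) matches neither proof and is evidently a typo in the paper. Do not weaken your derivation to force agreement with that display --- state and prove the corrected formula.
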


\begin{proof} 
Let $f$ and $h$ satisfying the condition of Lemma~\ref{lemma1}. 
Then, from \eqref{8} it follows that
\begin{equation*}
{^{\T}}{\mathds{I}}_{a+}^{\alpha;\psi}(fh)(x)
=\sum_{m=0}^{\infty}\binom{-\alpha}{m} {(fh)^{\Delta}}^{(m)}(x)
\frac{(\psi(x)-\psi(a))^{\alpha+m}}{\Gamma(\alpha+m+1)}.
\end{equation*}

Using the Leibniz rule, 
\begin{equation*}
{(fh)^{\Delta}}^{(m)}(x)=\sum_{k=0}^{m}{f^{\Delta}}^{(k)}(x){h^{\Delta}}^{(m-k)}(x)
\end{equation*}
with $m\in\N$ and $f,h\in C^{m}([a,b])$, which yields that
\begin{eqnarray*}
{^{\T}}{\mathds{I}}_{a+}^{\alpha;\psi}(fh)(x)
&=&\sum_{m=0}^{\infty}\binom{-\alpha}{m}
\sum_{k=0}^{m}\binom{m}{k}{f^{\Delta}}^{(k)}(x){h^{\Delta}}^{(m-k)}(x)
\frac{(\psi(x)-\psi(a))^{\alpha+m}}{\Gamma(\alpha+m+1)}\\
&=&\sum_{k=0}^{\infty}{f^{\Delta}}^{(k)}(x)\sum_{m=k}^{\infty}
\binom{-\alpha}{m}\binom{m}{k}{h^{\Delta}}^{(m-k)}(x)
\frac{(\psi(x)-\psi(a))^{\alpha+m}}{\Gamma(\alpha+m+1)}.
\end{eqnarray*}
Considering $n=m-k$ and using the identity 
\begin{equation*}
\binom{-\alpha}{n+k}\binom{n+k}{k}=\binom{-\alpha}{k}\binom{-(\alpha+k)}{n}
\end{equation*}
we obtain that
\begin{eqnarray*}
{^{\T}}{\mathds{I}}_{a+}^{\alpha;\psi}(fh)(x)
&=&\sum_{k=0}^{\infty}{f^{\Delta}}^{(k)}(x)
\sum_{n=0}^{\infty}\binom{-\alpha}{n+k}
\binom{n+k}{k}{h^{\Delta}}^{(n+k-k)}(x)
\frac{(\psi(x)-\psi(a))^{\alpha+n+k}}{\Gamma(\alpha+n+k+1)}\\
&=&\sum_{k=0}^{\infty}{f^{\Delta}}^{(k)}(x)\binom{-\alpha}{k}\sum_{n=0}^{\infty}
\binom{-(\alpha+k)}{n}{h^{\Delta}}^{(n)}(x)\frac{(\psi(x)-\psi(a))^{\alpha+n+k}}{\Gamma(\alpha+n+k+1)}\\
&=&\sum_{k=0}^{\infty}\binom{-\alpha}{k}{f^{\Delta}}^{(k)}(x)\,
{^{\T}}{\mathds{I}}_{a+}^{\alpha+k;\psi}h(x)
\end{eqnarray*}
and the proof is complete.
\end{proof}

\begin{proposition}  
Let $0\leq \alpha \leq 1$ and $0\leq \beta \leq 1$. Then,
\begin{enumerate}[label={\normalfont(\arabic*)}]
\item $\dt\big(\lambda_1 f(t)+\lambda_2 \psi(t)\big)=\lambda_1\dt f(t)
+\lambda_2\dt \psi(t)$, where $\lambda_{1},\lambda_{2}\in\mathbb{R}$.

\item $\displaystyle \dt(\psi(x)-\psi(a))^{\delta-1}
=\frac{\Gamma(\delta)}{\Gamma(\delta-\alpha)} 
(\psi(x)-\psi(a))^{\delta-\alpha-1}$, $\delta>1$.
\end{enumerate}
\end{proposition}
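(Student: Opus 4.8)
The plan is to read both identities straight off the defining formula \eqref{3}, which for $0<\alpha<1$ (so that $n=1$ and $\gamma=\alpha+\beta(1-\alpha)$) factors the operator as a composition of three pieces,
\[
\dt f(x)={^{\T}}{\mathds{I}}_{a+}^{\beta(1-\alpha);\psi}\Big(\tfrac{\Delta}{\psi^{\Delta}(x)}\Big){^{\T}}{\mathds{I}}_{a+}^{1-\gamma;\psi}f(x).
\]
For part (1), I would simply observe that each of these three factors is linear: the two $\psi$-Riemann--Liouville integrals are $\Delta$-integrals against a fixed kernel (Definition~\ref{B}), hence additive and homogeneous in their integrand, and the operator $\tfrac{\Delta}{\psi^{\Delta}}$ inherits linearity from the delta derivative. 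Since a composition of linear maps is linear, applying $\dt$ to $\lambda_1 f+\lambda_2\psi$ and distributing through each factor yields the claim at once.

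For part (2), I would apply the displayed formula to $f(x)=(\psi(x)-\psi(a))^{\delta-1}$ and evaluate the inner integral, the delta derivative, and the outer integral in turn. The key subcomputation is ${^{\T}}{\mathds{I}}_{a+}^{\mu;\psi}(\psi(x)-\psi(a))^{\delta-1}$ for a generic order $\mu>0$. Here I would imitate the substitution used in Proposition~\ref{A}, namely $r=\frac{\psi(s)-\psi(a)}{\psi(x)-\psi(a)}$, under which $\psi^{\Delta}(s)\,\Delta s=(\psi(x)-\psi(a))\,\Delta r$, $\psi(x)-\psi(s)=(\psi(x)-\psi(a))(1-r)$ and $\psi(s)-\psi(a)=(\psi(x)-\psi(a))r$; this collapses the $\Delta$-integral in \eqref{2} to a Beta-type integral over $[0,1]$ and gives $\frac{\Gamma(\delta)}{\Gamma(\mu+\delta)}(\psi(x)-\psi(a))^{\mu+\delta-1}$. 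Taking $\mu=1-\gamma$ produces the inner integral $\frac{\Gamma(\delta)}{\Gamma(\delta-\gamma+1)}(\psi(x)-\psi(a))^{\delta-\gamma}$.

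Next I would apply $\tfrac{\Delta}{\psi^{\Delta}}$ to this power, obtaining $\frac{\Gamma(\delta)}{\Gamma(\delta-\gamma)}(\psi(x)-\psi(a))^{\delta-\gamma-1}$ after using $(\delta-\gamma)\Gamma(\delta-\gamma)=\Gamma(\delta-\gamma+1)$. Finally, applying the outer integral ${^{\T}}{\mathds{I}}_{a+}^{\beta(1-\alpha);\psi}$ by the same substitution, now with order $\mu=\gamma-\alpha=\beta(1-\alpha)$, the exponent becomes $(\gamma-\alpha)+(\delta-\gamma)-1=\delta-\alpha-1$, and the Gamma factors telescope to leave exactly the constant $\frac{\Gamma(\delta)}{\Gamma(\delta-\alpha)}$, which is the asserted identity.

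The main obstacle is making the two integral evaluations genuinely rigorous on an arbitrary time scale rather than merely formal. The delicate point is the Beta-type $\Delta$-integral produced by the substitution: it naturally carries the time-scale Beta function $B^{\T}_{0,1}$, and only after identifying $\int_0^1 r^{\delta-1}(1-r)^{\mu-1}\Delta r$ with the classical $B(\delta,\mu)=\frac{\Gamma(\delta)\Gamma(\mu)}{\Gamma(\delta+\mu)}$ do the correction factors $g^{\T}$ of Remark~\ref{remarknovo} become trivial and the clean constant survive; in general the product of the two such factors arising in the inner and outer integrations need not be $1$, so this identification is exactly the step to watch. A secondary point is the power rule $\tfrac{\Delta}{\psi^{\Delta}}(\psi(x)-\psi(a))^{p}=p(\psi(x)-\psi(a))^{p-1}$, which is immediate for $\T=\R$ but must be invoked, in the same spirit as the rest of the construction, on a general time scale.
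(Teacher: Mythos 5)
Your part (1) is essentially the paper's own argument: the paper writes $\dt={^{\T}}{\mathds{I}}_{a+}^{\gamma-\alpha;\psi}\circ\dg$ and invokes linearity of the two factors, while you use the three-factor form of \eqref{3}; either way it is just ``a composition of linear operators is linear,'' and both are fine.

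For part (2) the routes differ in packaging but not in substance, and the comparison is instructive. The paper's proof computes nothing: it cites ready-made power rules, namely $\dg(\psi(x)-\psi(a))^{\delta-1}=\frac{\Gamma(\delta)}{\Gamma(\delta-\gamma)}(\psi(x)-\psi(a))^{\delta-\gamma-1}$ and ${^{\T}}{\mathds{I}}_{a+}^{\mu;\psi}(\psi(x)-\psi(a))^{\delta-1}=\frac{\Gamma(\delta)}{\Gamma(\delta+\mu)}(\psi(x)-\psi(a))^{\delta+\mu-1}$, and composes them through the two-factor decomposition (its own statement of the first rule even oscillates between $\alpha$ and $\gamma$, and the final exponent $\delta-\alpha+1$ is a typo for $\delta-\alpha-1$). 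You instead unwind the full definition and try to \emph{prove} those power rules by the Beta substitution. That buys transparency, and it is exactly why you hit the two obstacles you flag --- both of which are genuine and not artifacts of your route. First, the substitution turns the $\Delta$-integral into a time-scale Beta function $B^{\T}_{0,1}$, which on an arbitrary time scale only satisfies an \emph{inequality} against the classical $B(p,q)$ (the paper's quoted proposition from Benaissa--Ladrani); equivalently, by Remark~\ref{remarknovo} such compositions hold only up to factors $g^{\T}\neq 1$ in general, and there is no reason the factors from the inner and outer integrations cancel. Second, the rule $\frac{\Delta}{\psi^{\Delta}(x)}(\psi(x)-\psi(a))^{p}=p\,(\psi(x)-\psi(a))^{p-1}$ fails on a general time scale, since there is no chain rule (already $(g^{2})^{\Delta}=(g+g^{\sigma})g^{\Delta}\neq 2gg^{\Delta}$). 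The paper's proof silently assumes both facts: its quoted power rules are precisely the statements your obstacles call into question. So your proposal is as complete as the paper's own proof; the gap you honestly identify is a gap in the proposition as stated --- the clean constant $\frac{\Gamma(\delta)}{\Gamma(\delta-\alpha)}$ is exact for $\T=\R$, but on a general time scale the identity should carry $g^{\T}$-type correction factors.
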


\begin{proof}
(1) Using the fact that 
$\dt f(t)={^{\T}}{\mathds{I}}_{a+}^{\gamma-\alpha;\psi}\,\dg f(t)$ and 
because ${^{\T}}{\mathds{I}}_{a+}^{\gamma-\alpha;\psi}(\cdot)$ and $\dg(\cdot)$ 
are linear, we have that $\dt(\cdot)$ is also linear. 

\noindent (2) Remembering that $\dg(\psi(x)-\psi(a))^{\delta-1}
=\dfrac{\Gamma(\delta)}{\Gamma(\delta-\alpha)} (\psi(x)-\psi(a))^{\delta-\alpha-1}$ 
and 
$$
{^{\T}}{\mathds{I}}_{a+}^{\alpha;\psi}(\psi(x)-\psi(a))^{\delta-1}
=\dfrac{\Gamma(\delta)}{\Gamma(\delta+\alpha)} (\psi(x)-\psi(a))^{\delta+\alpha-1},
$$ 
we obtain that
\begin{eqnarray*}
\dt(\psi(x)-\psi(a))^{\delta-1}
&=&{^{\T}}{\mathds{I}}_{a+}^{\gamma-\alpha;\psi}\,\dg(\psi(x)-\psi(a))^{\delta-1}\\
&=&{^{\T}}{\mathds{I}}_{a+}^{\gamma-\alpha;\psi}\bigg\{
\frac{\Gamma(\delta)}{\Gamma(\delta-\gamma)}
(\psi(x)-\psi(a))^{\delta-\gamma-1}\bigg\}\\
&=&\frac{\Gamma(\delta)}{\Gamma(\delta-\alpha)}(\psi(x)-\psi(a))^{\delta-\alpha+1}.
\end{eqnarray*}
The proof is complete.
\end{proof}

\begin{remark}
\label{rema1} 
In particular, given $1\leq k\in \mathbb{N}$, and as $\delta>1$, 
we have $\displaystyle \dt(\psi(x)-\psi(a))^{k}
=\frac{k!}{\Gamma(k-1-\alpha)} (\psi(x)-\psi(a))^{k-\alpha}$. 
On the other hand, for $1>k\in \mathbb{N}_{0}$, 
we have $\displaystyle \dt(\psi(x)-\psi(a))^{k}=0$.
\end{remark}

\begin{theorem} 
\label{th-5}
If $f\in C^{n}[a,b]$, $n-1<\alpha<n$, and $0\leq\beta\leq 1$, then
\begin{equation*}
{^{\T}}{\mathds{I}}_{a+}^{\gamma;\psi}\,\dt f(x)
= g^{\T}(\alpha,\gamma-\alpha) g^{\T}(\gamma-n,n-\gamma) f(x)- g^{\T}(\alpha,\gamma-\alpha)\sum_{k=1}^{n}\frac{(\psi(x)-\psi(a))^{\gamma-k}}{\Gamma(\gamma-k+1)}{
f_{\psi}^{\Delta}}^{(n-k)}\,{^{\T}}{\mathds{I}}_{a+}^{n-\gamma;\psi}f(a),
\end{equation*}
\end{theorem}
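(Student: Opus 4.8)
The plan is to peel the $\psi$-Hilfer derivative into a fractional integral times an $n$-fold $\Delta$-derivative, and then to collapse the composition using the generalized semigroup law of Remark~\ref{remarknovo} together with a repeated $\Delta$-integration by parts. First I would invoke the factorization of Remark~\ref{remark-1:5:0}, $\dt f(x)={^{\T}}{\mathds{I}}_{a+}^{\gamma-\alpha;\psi}\,\dg f(x)$, and the $\psi$-Riemann--Liouville derivative of Remark~\ref{remark-1:2} taken at order $\gamma$, $\dg f(x)=\big(\tfrac{\Delta}{\psi^{\Delta}(x)}\big)^{(n)}{^{\T}}{\mathds{I}}_{a+}^{n-\gamma;\psi}f(x)$. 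Applying the left-hand operator ${^{\T}}{\mathds{I}}_{a+}^{\alpha;\psi}$ (the prefactor $g^{\T}(\alpha,\gamma-\alpha)$ is exactly what identifies the outer order as $\alpha$ and the inner one as $\gamma-\alpha$) and using the composition rule ${^{\T}}{\mathds{I}}_{a+}^{\alpha;\psi}\,{^{\T}}{\mathds{I}}_{a+}^{\gamma-\alpha;\psi}=g^{\T}(\alpha,\gamma-\alpha)\,{^{\T}}{\mathds{I}}_{a+}^{\gamma;\psi}$ of Remark~\ref{remarknovo}, I factor out $g^{\T}(\alpha,\gamma-\alpha)$ and am reduced to evaluating ${^{\T}}{\mathds{I}}_{a+}^{\gamma;\psi}\big(\tfrac{\Delta}{\psi^{\Delta}}\big)^{(n)}h(x)$ for the single function $h:={^{\T}}{\mathds{I}}_{a+}^{n-\gamma;\psi}f$.

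The heart of the argument is the reduction identity
\[
{^{\T}}{\mathds{I}}_{a+}^{\gamma;\psi}\Big(\tfrac{\Delta}{\psi^{\Delta}}\Big)^{(n)}h(x)
={^{\T}}{\mathds{I}}_{a+}^{\gamma-n;\psi}h(x)
-\sum_{k=1}^{n}\frac{(\psi(x)-\psi(a))^{\gamma-k}}{\Gamma(\gamma-k+1)}
\Big(\tfrac{\Delta}{\psi^{\Delta}}\Big)^{(n-k)}h(a),
\]
which I would prove by induction on $n$. The base case $n=1$ is a single $\Delta$-integration by parts against the kernel $\psi^{\Delta}(s)(\psi(x)-\psi(s))^{\gamma-1}$: it drops the integration order from $\gamma$ to $\gamma-1$ and creates the boundary term $\frac{(\psi(x)-\psi(a))^{\gamma-1}}{\Gamma(\gamma)}h(a)$, where the power-rule ${^{\T}}{\mathds{I}}_{a+}^{\gamma;\psi}(\psi(x)-\psi(a))^{\delta-1}=\frac{\Gamma(\delta)}{\Gamma(\delta+\gamma)}(\psi(x)-\psi(a))^{\delta+\gamma-1}$ (already used in the excerpt) pins down the Gamma quotient. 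Iterating this $n$ times produces the displayed sum.

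Substituting $h={^{\T}}{\mathds{I}}_{a+}^{n-\gamma;\psi}f$ then finishes the proof. The leading term becomes ${^{\T}}{\mathds{I}}_{a+}^{\gamma-n;\psi}{^{\T}}{\mathds{I}}_{a+}^{n-\gamma;\psi}f$, which by the composition rule of Remark~\ref{remarknovo} equals $g^{\T}(\gamma-n,n-\gamma)\,f(x)$ (the order-zero composition being read as the identity, as in the $\T=\R$ case), and this supplies the second Beta factor. Writing $\big(\tfrac{\Delta}{\psi^{\Delta}}\big)^{(n-k)}h(a)={f_{\psi}^{\Delta}}^{(n-k)}{^{\T}}{\mathds{I}}_{a+}^{n-\gamma;\psi}f(a)$ and multiplying everything by $g^{\T}(\alpha,\gamma-\alpha)$ reproduces the claimed identity term by term.

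The step I expect to be the main obstacle is the $\Delta$-integration by parts underlying the induction: on a general time scale the Leibniz/product rule carries the forward-jump operator $\sigma$, so both the interior integral and the boundary contributions a priori acquire $\sigma$-shifts, and one must verify that, once the kernel is replaced by its $\Delta_s$-antiderivative, these shifts telescope exactly to the clean terms $\frac{(\psi(x)-\psi(a))^{\gamma-k}}{\Gamma(\gamma-k+1)}(\cdots)(a)$ with no residual correction surviving. A secondary subtlety is the degenerate, order-zero composition ${^{\T}}{\mathds{I}}_{a+}^{\gamma-n;\psi}{^{\T}}{\mathds{I}}_{a+}^{n-\gamma;\psi}$ and the factor $g^{\T}(\gamma-n,n-\gamma)$, which I would interpret through the same convention as Remark~\ref{remarknovo}, using Lemma~\ref{lemma-4} to guarantee that the boundary evaluations at $a$ are finite and well posed.
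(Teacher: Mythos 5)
Your proposal follows essentially the same route as the paper: the paper likewise factors $\dt f={^{\T}}{\mathds{I}}_{a+}^{\gamma-\alpha;\psi}\,\dg f$, applies the composition rule of Remark~\ref{remarknovo} to extract the factor $g^{\T}(\alpha,\gamma-\alpha)$, and then evaluates ${^{\T}}{\mathds{I}}_{a+}^{\gamma;\psi}\,\dg f(x)$ by integrating by parts $n$ times, the boundary terms yielding the sum and the surviving composition ${^{\T}}{\mathds{I}}_{a+}^{\gamma-n;\psi}{^{\T}}{\mathds{I}}_{a+}^{n-\gamma;\psi}f$ yielding the factor $g^{\T}(\gamma-n,n-\gamma)f(x)$. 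Your inductive reduction identity and the attention to the $\sigma$-shifts in the time-scale integration by parts merely make explicit what the paper compresses into the single phrase ``integrating by parts $n$-times,'' so the two arguments coincide in substance.
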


\begin{proof} 
Using the identity
\begin{eqnarray}
\label{star}
{^{\T}}{\mathds{I}}_{a+}^{\alpha;\psi}\,{^{\T}}{\mathds{I}}_{a+}^{\gamma;\psi}f(x)
=g^{\T}(\alpha,\beta) {^{\T}}{\mathds{I}}_{a+}^{\alpha+\gamma;\psi}f(x),
\end{eqnarray}
we have 
\begin{eqnarray}
\label{star-2}
{^{\T}}{\mathds{I}}_{a+}^{\alpha;\psi}\,\dt f(x)
=g^{\T}(\alpha,\gamma-\alpha){^{\T}}{\mathds{I}}_{a+}^{\gamma;\psi}\, \dg f(x) 
\end{eqnarray}
with $\gamma=\alpha+\beta(n-\alpha)$. Integrating by parts $n$-times, we get
\begin{eqnarray*}
{^{\T}}{\mathds{I}}_{a+}^{\gamma;\psi}\,\dg f(x)
=g^{\T} (\gamma-n,n-\gamma) f(x)-\sum_{k=1}^{n}
\frac{(\psi(x)-\psi(a))^{\gamma-k}}{\Gamma(\gamma-k+1)}{f_{\psi}^{\Delta}}^{(n-k)}
\,{^{\T}}{\mathds{I}}_{a+}^{n-\gamma;\psi}f(a).
\end{eqnarray*}

From \eqref{star} and \eqref{star-2}, we conclude that
\begin{equation*}
{^{\T}}{\mathds{I}}_{a+}^{\gamma;\psi}\,\dt f(x)
= g^{\T}(\alpha,\gamma-\alpha) g^{\T}(\gamma-n,n-\gamma) f(x)- g^{\T}(\alpha,\gamma-\alpha)\sum_{k=1}^{n}\frac{(\psi(x)-\psi(a))^{\gamma-k}}{\Gamma(\gamma-k+1)}{
f_{\psi}^{\Delta}}^{(n-k)}\,{^{\T}}{\mathds{I}}_{a+}^{n-\gamma;\psi}f(a),
\end{equation*}
where
\begin{equation*}
{f_{\psi}^{\Delta}}^{(n)}
:=\l(\frac{\Delta}{\psi^{\Delta}(x)}\r)^{n}f(x)
\end{equation*}
and $g^{\T}(p,q)=\dfrac{B_{0,1}^{\T}(p,q)}{B(p,q)}$ 
are the Beta functions in time scales and the classical Beta function. 
The result is proved.
\end{proof}

\begin{remark} 
\begin{enumerate}
\item  Taking $n=1$ in Theorem~\ref{th-5}, we have
\begin{equation*}
{^{\T}}{\mathds{I}}_{a+}^{\gamma;\psi}\,\dt f(x)
= g^{\T}(\alpha,\gamma-\alpha) g^{\T}(\gamma-1,1-\gamma) f(x)
- g^{\T}(\alpha,\gamma-\alpha)\frac{(\psi(x)-\psi(a))^{\gamma-1}}{\Gamma(\gamma)} 
\,{^{\T}}{\mathds{I}}_{a+}^{1-\gamma;\psi}f(a).
\end{equation*}

\item Taking $\T=\mathbb{R}$ in Theorem~\ref{th-5}, we have
\begin{equation*}
{\mathds{I}}_{a+}^{\gamma;\psi}\, \Delta^{\alpha,\beta;\psi}_{a+}
f(x) =  f(x)- \frac{(\psi(x)-\psi(a))^{\gamma-1}}{\Gamma(\gamma)} 
\,{\mathds{I}}_{a+}^{1-\gamma;\psi}f(a).
\end{equation*}
\end{enumerate}
\end{remark}

\begin{proposition}
\label{prop-I} 
For any integrable function $h$ on $[a,b]$ one has
\begin{equation*}
\dt\,{^{\T}}{\mathds{I}}_{a+}^{\alpha;\psi}f(x)=g^{\T}(\gamma-n,n-\gamma)\,\, f(x).
\end{equation*}
\end{proposition}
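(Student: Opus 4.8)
The plan is to reduce $\dt\,{^{\T}}{\mathds{I}}_{a+}^{\alpha;\psi}$ to a single ``derivative inverts integral'' cancellation and to track the Beta-function constants carefully. First I would invoke the factorization of Remark~\ref{remark-1:5:0}, namely $\dt f=\,{^{\T}}{\mathds{I}}_{a+}^{\gamma-\alpha;\psi}\dg f$, so that
\[
\dt\,{^{\T}}{\mathds{I}}_{a+}^{\alpha;\psi}f(x)
={^{\T}}{\mathds{I}}_{a+}^{\gamma-\alpha;\psi}\,\dg\,{^{\T}}{\mathds{I}}_{a+}^{\alpha;\psi}f(x),
\]
whereby the whole problem becomes the evaluation of the inner composition $\dg\,{^{\T}}{\mathds{I}}_{a+}^{\alpha;\psi}$, i.e.\ a $\psi$-Riemann--Liouville derivative of order $\gamma$ acting on a $\psi$-Riemann--Liouville integral of order $\alpha$.

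Next I would expand $\dg=\big(\Delta/\psi^{\Delta}(x)\big)^{(n)}{^{\T}}{\mathds{I}}_{a+}^{n-\gamma;\psi}$ as in \eqref{4} and collapse the two adjacent integrals by the semigroup law of Remark~\ref{remarknovo},
\[
{^{\T}}{\mathds{I}}_{a+}^{n-\gamma;\psi}\,{^{\T}}{\mathds{I}}_{a+}^{\alpha;\psi}f
=g^{\T}(n-\gamma,\alpha)\,{^{\T}}{\mathds{I}}_{a+}^{\,n-\gamma+\alpha;\psi}f .
\]
Since $\gamma=\alpha+\beta(n-\alpha)$ gives $n-\gamma+\alpha=n-\beta(n-\alpha)=n-(\gamma-\alpha)$, the surviving block is $\big(\Delta/\psi^{\Delta}(x)\big)^{(n)}{^{\T}}{\mathds{I}}_{a+}^{\,n-(\gamma-\alpha);\psi}$, which is exactly the $\psi$-Riemann--Liouville derivative $\dgr$ of order $\gamma-\alpha$ (on $\T=\R$ this is the classical identity $D^{n}\mathds{I}^{\,n-\nu}=D^{\nu}$). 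Hence $\dg\,{^{\T}}{\mathds{I}}_{a+}^{\alpha;\psi}f=g^{\T}(n-\gamma,\alpha)\,\dgr f$, and the target reduces to $g^{\T}(n-\gamma,\alpha)\,{^{\T}}{\mathds{I}}_{a+}^{\gamma-\alpha;\psi}\dgr f$.

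The hard part will be the final cancellation ${^{\T}}{\mathds{I}}_{a+}^{\gamma-\alpha;\psi}\dgr f$, an integral placed against a Riemann--Liouville derivative of the same order $\gamma-\alpha$. I would dispose of it exactly through the $n$-fold integration by parts already performed inside the proof of Theorem~\ref{th-5}: this produces the principal term, proportional to $g^{\T}(\gamma-n,n-\gamma)f(x)$, together with boundary contributions built from $(\psi(x)-\psi(a))^{\gamma-k}\,{^{\T}}{\mathds{I}}_{a+}^{n-\gamma;\psi}(\cdot)(a)$. The decisive observation is that these evaluations at $a$ land on $\psi$-Riemann--Liouville integrals, so Lemma~\ref{lemma-4} annihilates each of them and only the constant multiple of $f$ survives. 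The genuine obstacles are thus twofold: justifying the composition $\big(\Delta/\psi^{\Delta}\big)^{(n)}\circ{^{\T}}{\mathds{I}}_{a+}^{\,n-\nu;\psi}={^{\T}_{\rm RL}}{\Delta}_{a+}^{\nu;\psi}$ on an arbitrary time scale, where each semigroup step of Remark~\ref{remarknovo} injects a Beta-function factor absent in the $\R$ case; and verifying that all such factors $g^{\T}(\cdot,\cdot)$ amalgamate into the single advertised constant $g^{\T}(\gamma-n,n-\gamma)$. As a sanity check I would specialize to $\T=\R$, where every $g^{\T}\equiv 1$ and the statement collapses to the familiar ${}^{\rm H}\mathds{D}_{a+}^{\alpha,\beta;\psi}\mathds{I}_{a+}^{\alpha;\psi}f=f$.
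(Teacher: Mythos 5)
Your route is essentially the paper's own proof: the paper also starts from the factorization $\dt={^{\T}}{\mathds{I}}_{a+}^{\gamma-\alpha;\psi}\,\dg$ of Remark~\ref{remark-1:5:0}, reduces the inner composition $\dg\,{^{\T}}{\mathds{I}}_{a+}^{\alpha;\psi}$ to $\dgr$, invokes the $n$-fold integration-by-parts identity from the proof of Theorem~\ref{th-5}, and annihilates the boundary terms with Lemma~\ref{lemma-4}. The one substantive difference is the bookkeeping of the time-scale Beta constants, and there your version is the more careful one. The paper writes $\dt\,{^{\T}}{\mathds{I}}_{a+}^{\alpha;\psi}f(x)={^{\T}}{\mathds{I}}_{a+}^{\gamma-\alpha;\psi}\,\dgr f(x)$ with no constant at all, which amounts to silently setting $g^{\T}(n-\gamma,\alpha)=1$; this is inconsistent with the paper's own Remark~\ref{remarknovo} (and Proposition~\ref{A}), by which composing ${^{\T}}{\mathds{I}}_{a+}^{n-\gamma;\psi}$ with ${^{\T}}{\mathds{I}}_{a+}^{\alpha;\psi}$ on a general time scale always produces the factor $g^{\T}(n-\gamma,\alpha)\geq 1$, with equality essentially only when $\T=\R$. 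So the obstacle you flag at the end --- whether the accumulated factors really amalgamate into the single advertised constant $g^{\T}(\gamma-n,n-\gamma)$ --- is a genuine gap, and it is not resolved by the paper either: carried out consistently, your computation yields $g^{\T}(n-\gamma,\alpha)\,g^{\T}(\gamma-n,n-\gamma)\,f(x)$ (and, strictly, the constant supplied by Theorem~\ref{th-5} should be the one associated with order $\gamma-\alpha$ rather than $\gamma$), which agrees with the statement only when the extra factor equals $1$. In short, your proposal reproduces the paper's argument and correctly isolates the exact point at which that argument is loose; to close it one would have to either prove $g^{\T}(n-\gamma,\alpha)=1$ on an arbitrary time scale (false in general) or restate the proposition with the corrected constant.
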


\begin{proof} 
By definition of $\dt(\cdot)$, and using Proposition~\ref{A}, 
Lemma~\ref{lemma-4} and Theorem~\ref{th-5}, we can write that
\begin{eqnarray*}
\dt\,{^{\T}}{\mathds{I}}_{a+}^{\alpha;\psi}f(x)
&=&{^{\T}}{\mathds{I}}_{a+}^{\gamma-\alpha;\psi}\,
\dgr f(x)\\
&=&g^{\T}(\gamma-n,n-\gamma) f(x)-\sum_{k=1}^{n}\frac{(\psi(x)-\psi(a))^{\gamma-k}}{\Gamma(\gamma
-k+1)}{f_{\psi}^{\Delta}}^{(n-k)}\,{^{\T}}{\mathds{I}}_{a+}^{n-\gamma;\psi}f(a)\\
&=&g^{\T}(\gamma-n,n-\gamma) f(x).
\end{eqnarray*}
The proof is complete.
\end{proof}

\begin{theorem}
\label{th-6} 
The $\psi$-Hilfer fractional derivative on time scales is a bounded operator 
for all $n-1<\alpha<n$ and $0\leq\beta\leq{1}$ with
\begin{equation*}
\Big\|\dt f\Big\|_{C_{\gamma,\psi}}
\leq {L} \Big\|{f^{\Delta}}^{(n)}\Big\|_{C_{\gamma,\psi}^{n}}.
\end{equation*}
\end{theorem}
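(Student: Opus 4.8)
The plan is to turn the $\psi$-Hilfer derivative into $\psi$-Caputo form, so that a \emph{single} $\psi$-Riemann--Liouville fractional integral acts on the top $\psi$-delta derivative of $f$, and then to invoke the boundedness of that integral operator already proved above. The weighted norm on the right-hand side carries the superscript $n$ precisely because the middle step trades one differentiation for the stronger norm; the role of the fractional integrals is only to supply bounded multiplicative factors.

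First I would use the factorization of Remark~\ref{remark-1:5:0}, writing $\dt f(x)={^{\T}}{\mathds{I}}_{a+}^{\gamma-\alpha;\psi}\,\dg f(x)$ with $\gamma-\alpha=\beta(n-\alpha)$, and apply the boundedness of the $\psi$-Riemann--Liouville integral (with order $\gamma-\alpha$) to reduce the claim to the single estimate $\|\dg f\|_{C_{\gamma,\psi}}\le C\|{f^{\Delta}}^{(n)}\|_{C^{n}_{\gamma,\psi}}$. To obtain it, I would decompose the $\psi$-Riemann--Liouville derivative $\dg f$ of order $\gamma\in(n-1,n]$ into its $\psi$-Caputo counterpart plus boundary terms, by the very same $n$-fold delta integration by parts used in the proof of Theorem~\ref{th-5}. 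This produces the Caputo part ${^{\T}}{\mathds{I}}_{a+}^{n-\gamma;\psi}\,{f_{\psi}^{\Delta}}^{(n)}$, where ${f_{\psi}^{\Delta}}^{(n)}:=\big(\Delta/\psi^{\Delta}(x)\big)^{(n)}f$, together with the finitely many terms
\begin{equation*}
\sum_{k=1}^{n}\frac{(\psi(x)-\psi(a))^{\gamma-k}}{\Gamma(\gamma-k+1)}\,{f_{\psi}^{\Delta}}^{(n-k)}\,{^{\T}}{\mathds{I}}_{a+}^{n-\gamma;\psi}f(a).
\end{equation*}

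The Caputo part is controlled by the $\psi$-Riemann--Liouville integral estimate, now with order $n-\gamma$, giving a factor $(\psi(b)-\psi(a))^{n-\gamma}/\Gamma(n-\gamma+1)$ times $\|{f_{\psi}^{\Delta}}^{(n)}\|_{C_{\gamma,\psi}}$. Each boundary term is bounded in $C_{\gamma,\psi}$ by a dichotomy on $\gamma$: when $2\gamma<n$ the weight $\gamma$ is strictly smaller than the integration order $n-\gamma$, so Lemma~\ref{lemma-4} forces ${^{\T}}{\mathds{I}}_{a+}^{n-\gamma;\psi}f(a)=0$ and the term vanishes; when $2\gamma\ge n$ the surviving power $(\psi(x)-\psi(a))^{2\gamma-k}$ ($k\le n$) has a nonnegative exponent and is therefore bounded on $[a,b]$, with finite coefficient $\big|{^{\T}}{\mathds{I}}_{a+}^{n-\gamma;\psi}f(a)\big|$. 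Collecting the Gamma-function and $(\psi(b)-\psi(a))$ factors from both integral applications into a single constant $L$, and noting that $\|{f_{\psi}^{\Delta}}^{(n)}\|_{C_{\gamma,\psi}}$ and the boundary coefficients are all dominated by $\|{f^{\Delta}}^{(n)}\|_{C^{n}_{\gamma,\psi}}$ (since $\psi^{\Delta}$ is continuous and nonvanishing and the $C^{n}$-norm controls all lower-order $\psi$-delta derivatives), the stated bound follows.

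The main obstacle is exactly the passage from the Riemann--Liouville form $\dg f$ to its Caputo form: on a general time scale the product and chain rules for the delta operator are not the classical ones, so commuting $\big(\Delta/\psi^{\Delta}\big)^{(n)}$ with the fractional integral must be justified through the time-scale integration by parts of Theorem~\ref{th-5}, and one must verify that the resulting boundary terms are genuinely bounded in the weighted space $C_{\gamma,\psi}$ — which is where Lemma~\ref{lemma-4} and the exponent bookkeeping above enter. Once that interchange is legitimate, everything else is a routine application of the integral estimate already established.
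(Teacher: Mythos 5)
Your proposal follows the same skeleton as the paper's own proof: both start from the factorization $\dt f(x)={^{\T}}{\mathds{I}}_{a+}^{\gamma-\alpha;\psi}\,\dg f(x)$ of Remark~\ref{remark-1:5:0}, and both then invoke the weighted boundedness estimate for the $\psi$-Riemann--Liouville integral twice --- once with order $\gamma-\alpha$ and once with order $n-\gamma$ --- ending with a constant of the form
\begin{equation*}
\frac{(\psi(b)-\psi(a))^{n-\alpha}}{(n-\gamma)(\gamma-\alpha)\Gamma(n-\gamma)\Gamma(\gamma-\alpha)}.
\end{equation*}
The genuine difference lies in the middle step. The paper passes directly from $\bigl\|\dg f\bigr\|_{C_{\gamma,\psi}}$ to a bound of the form $\frac{1}{\Gamma(n-\gamma)}\bigl\|{f^{\Delta}}^{(n)}\bigr\|_{C_{\gamma,\psi}^{n}}\max_{x}\bigl|\int_{a}^{x}\psi^{\Delta}(t)(\psi(x)-\psi(t))^{n-\gamma-1}\Delta t\bigr|$; that is, it silently treats the Riemann--Liouville derivative (differentiation \emph{outside} the integral) as if it were already in Caputo form (integral acting on the $n$-th $\psi$-delta derivative), with no mention of the interchange or of boundary terms, and it also conflates ${f^{\Delta}}^{(n)}$ with ${f_{\psi}^{\Delta}}^{(n)}$. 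You identify precisely this interchange as the main obstacle and resolve it: the $n$-fold integration by parts of Theorem~\ref{th-5} produces the boundary sum involving ${^{\T}}{\mathds{I}}_{a+}^{n-\gamma;\psi}f(a)$, which you then eliminate by a clean dichotomy --- Lemma~\ref{lemma-4} kills the terms when $2\gamma<n$, and the nonnegativity of the exponent $2\gamma-k$ keeps them bounded when $2\gamma\geq n$. So your route is the paper's route made rigorous: the paper's shorter argument buys brevity at the cost of an unjustified Riemann--Liouville-to-Caputo replacement, which is exactly the gap your proposal fills.
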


\begin{proof} 
Remembering that $\dt f(x)={^{\T}}{\mathds{I}}_{a+}^{\gamma-\alpha;\psi}\,\dg f(x)$ yields
\begin{eqnarray*}
\Big\|\dt f\Big\|_{C_{\gamma,\psi}}
&=&\Big\|{^{\T}}{\mathds{I}}_{a+}^{\gamma-\alpha;\psi}\,
\dg f(x)\Big\|\\
&\leq &\frac{\Big\|\dg f\Big\|_{C_{\gamma,\psi}}}{\Gamma(\gamma-\alpha)}\max_{x\in[a,b]}
\bigg|\int_{a}^{x}\psi^{\Delta}(t)(\psi(x)-\psi(t))^{\gamma-\alpha-1}\Delta t\bigg|\\
&\leq &\frac{(\psi(b)-\psi(a))^{\gamma-\alpha}}{(\gamma-\alpha)\Gamma(\gamma-\alpha)}
\Big\|\dg f\Big\|_{C_{\gamma,\psi}}\\
&\leq &\frac{(\psi(b)-\psi(a))^{\gamma-\alpha}}{(\gamma-\alpha)\Gamma(\gamma-\alpha)}
\frac{\Big\|{f^{\Delta}}^{(n)}\Big\|_{C_{\gamma,\psi}^{n}}}{\Gamma(n-\gamma)}
\max_{x\in[a,b]}\bigg|\int_{a}^{x}\psi^{\Delta}(t)(\psi(x)-\psi(t))^{n-\gamma-1}\Delta t\bigg|\\
&\leq &\frac{(\psi(b)-\psi(a))^{n-\alpha}}{(n-\gamma)(\gamma-\alpha)
\Gamma(n-\gamma)\Gamma(\gamma-\alpha)}\Big\|{f^{\Delta}}^{(n)}\Big\|_{C_{\gamma,\psi}^{n}},
\end{eqnarray*}
which proves the intended result.
\end{proof}

\begin{theorem}
\label{th-11} 
Let $f\in C^{1}([a,b])$, $\alpha\geq{0}$, $\delta\geq{0}$ 
and $0\leq{\beta}\leq{1}.$ Then, 
\begin{equation*}
\dt\,{^{\T}}{\mathds{I}}_{a+}^{\delta;\psi}f(x)
=g^{\T}(1-\alpha,\delta)\,\,g^{\T}(\gamma-\alpha,
\gamma-\delta){\mathds{I}}_{a+}^{2\gamma-\alpha-\delta;\psi}f(x)
\end{equation*}
with $\alpha\geq\delta\geq{0}$.
\end{theorem}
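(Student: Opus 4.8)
The plan is to strip the composite operator down to a single $\psi$-Riemann--Liouville integral by peeling the Hilfer derivative apart and merging adjacent integrals one step at a time, each merge contributing a Beta quotient $g^{\T}$. First I would invoke the factorization recorded in Remark~\ref{remark-1:5:0}, namely $\dt(\cdot)={^{\T}}{\mathds{I}}_{a+}^{\gamma-\alpha;\psi}\,\dg(\cdot)$, so that
\begin{equation*}
\dt\,{^{\T}}{\mathds{I}}_{a+}^{\delta;\psi}f(x)
={^{\T}}{\mathds{I}}_{a+}^{\gamma-\alpha;\psi}\,\dg\,{^{\T}}{\mathds{I}}_{a+}^{\delta;\psi}f(x).
\end{equation*}
This isolates the genuinely differential part $\dg\,{^{\T}}{\mathds{I}}_{a+}^{\delta;\psi}f$ and leaves only an integral operator on the outside, which is the piece that will ultimately produce the factor $g^{\T}(\gamma-\alpha,\gamma-\delta)$.

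Next I would expand $\dg$ through its definition \eqref{4} as $\big(\Delta/\psi^{\Delta}(x)\big)^{(n)}\,{^{\T}}{\mathds{I}}_{a+}^{n-\gamma;\psi}$ and merge the two inner integrals ${^{\T}}{\mathds{I}}_{a+}^{n-\gamma;\psi}\,{^{\T}}{\mathds{I}}_{a+}^{\delta;\psi}$ by the semigroup identity of Remark~\ref{remarknovo}; this produces the first Beta factor and adds the two orders. I would then apply the $n$-fold delta derivative $\big(\Delta/\psi^{\Delta}(x)\big)^{(n)}$ to the resulting single integral, carrying out the differentiation termwise via the series representation of Lemma~\ref{lemma1} together with the power rule (item~(2) of the Proposition following Remark~\ref{rema1}) and discarding the boundary contributions at $x=a$ by Lemma~\ref{lemma-4}. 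The effect is to collapse $\dg\,{^{\T}}{\mathds{I}}_{a+}^{\delta;\psi}f$ into a scalar multiple of a single $\psi$-Riemann--Liouville integral of reduced order.

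Finally I would reinstate the outer operator ${^{\T}}{\mathds{I}}_{a+}^{\gamma-\alpha;\psi}$ and fuse it with the integral produced in the previous step, once more by Remark~\ref{remarknovo}. This last merge is exactly what manufactures the factor $g^{\T}(\gamma-\alpha,\gamma-\delta)$ together with the terminal order $2\gamma-\alpha-\delta=(\gamma-\alpha)+(\gamma-\delta)$, while the remaining Beta quotient is to be identified with $g^{\T}(1-\alpha,\delta)$; assembling the two pieces yields the asserted identity.

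The delicate point, and the step I expect to be the main obstacle, is the bookkeeping of the Beta quotients $g^{\T}(p,q)$. Over $\R$ each composition of two fractional integrals is a clean semigroup with constant $1$, but on a time scale every such composition carries a genuine factor $g^{\T}$, and, crucially, the $n$-fold delta differentiation of a $\psi$-fractional integral does not cancel order-for-order as the classical fundamental theorem would suggest. Matching the two merges to the \emph{precise} arguments $(1-\alpha,\delta)$ and $(\gamma-\alpha,\gamma-\delta)$ and the exponent $2\gamma-\alpha-\delta$ is therefore the subtle part: one must track each intermediate order exactly (checking in particular whether the inner factor organizes as $g^{\T}(1-\alpha,\delta)$ rather than $g^{\T}(1-\gamma,\delta)$) and verify that the hypotheses $\alpha\geq\delta\geq 0$ together with $\gamma>\alpha$ and $\gamma>\delta$ keep every argument of the intervening Beta and Gamma functions in its admissible range.
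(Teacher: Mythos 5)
Your strategy coincides with the paper's own proof: factor $\dt={^{\T}}{\mathds{I}}_{a+}^{\gamma-\alpha;\psi}\,\dg$ via Remark~\ref{remark-1:5:0}, expand the Riemann--Liouville derivative as a delta derivative of a fractional integral, and merge adjacent integrals twice via Remark~\ref{remarknovo}, collecting one $g^{\T}$ factor per merge (your termwise differentiation through Lemma~\ref{lemma1} replaces the paper's one-line order reduction, and no boundary terms actually arise here, so Lemma~\ref{lemma-4} plays no role). The difference is that the paper never carries out your Step~2 at order $\gamma$: it derives the auxiliary identity \eqref{I} for the derivative of order $\alpha$, namely $\dga\,{^{\T}}{\mathds{I}}_{a+}^{\delta;\psi}f=g^{\T}(1-\alpha,\delta)\,{^{\T}}{\mathds{I}}_{a+}^{\alpha-\delta;\psi}f$, and then substitutes it into ${^{\T}}{\mathds{I}}_{a+}^{\gamma-\alpha;\psi}\,\dg\,{^{\T}}{\mathds{I}}_{a+}^{\delta;\psi}f$, where the derivative has order $\gamma$, keeping the factor $g^{\T}(1-\alpha,\delta)$ while treating the surviving integral as if its order were $\gamma-\delta$.

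The point you flag as ``the subtle part'' is therefore not a bookkeeping detail you could settle with more care; it is fatal to the plan as outlined. Done consistently at order $\gamma$ (with $n=1$), your Step~2 gives
\begin{align*}
\dg\,{^{\T}}{\mathds{I}}_{a+}^{\delta;\psi}f(x)
&=\l(\frac{\Delta}{\psi^{\Delta}(x)}\r){^{\T}}{\mathds{I}}_{a+}^{1-\gamma;\psi}\,
{^{\T}}{\mathds{I}}_{a+}^{\delta;\psi}f(x)
=g^{\T}(1-\gamma,\delta)\l(\frac{\Delta}{\psi^{\Delta}(x)}\r)
{^{\T}}{\mathds{I}}_{a+}^{1-\gamma+\delta;\psi}f(x)\\
&=g^{\T}(1-\gamma,\delta)\,{^{\T}}{\mathds{I}}_{a+}^{\delta-\gamma;\psi}f(x),
\end{align*}
so the inner Beta factor is $g^{\T}(1-\gamma,\delta)$, not $g^{\T}(1-\alpha,\delta)$, and the inner order is $\delta-\gamma$, not $\gamma-\delta$. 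Your Step~3 then produces
\begin{equation*}
{^{\T}}{\mathds{I}}_{a+}^{\gamma-\alpha;\psi}\,\dg\,{^{\T}}{\mathds{I}}_{a+}^{\delta;\psi}f(x)
=g^{\T}(1-\gamma,\delta)\,g^{\T}(\gamma-\alpha,\delta-\gamma)\,
{^{\T}}{\mathds{I}}_{a+}^{\delta-\alpha;\psi}f(x),
\end{equation*}
whose terminal order $\delta-\alpha$ equals the asserted $2\gamma-\alpha-\delta$ only when $\gamma=\delta$. A quick sanity check confirms this cannot be repaired: for $\T=\R$ every $g^{\T}\equiv 1$ and the composition must reduce to ${\mathds{I}}_{a+}^{\delta-\alpha;\psi}f$, whereas the theorem asserts ${\mathds{I}}_{a+}^{2\gamma-\alpha-\delta;\psi}f$. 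So a faithful execution of your outline proves a formula different from the one stated; the stated formula is obtained in the paper only through the order-$\alpha$/order-$\gamma$ substitution described above, which your consistent version of the argument correctly refuses to make.
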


\begin{proof} 
We begin by noting that
\begin{equation}
\label{I}
\begin{split}
\dga\,{^{\T}}{\mathds{I}}_{a+}^{\delta;\psi}f(x)
&=\l(\frac{\Delta}{\psi^{\Delta}(x)}\r)
{^{\T}}{\mathds{I}}_{a+}^{1-\alpha;\psi}\,{^{\T}}{\mathds{I}}_{a+}^{\delta;\psi}f(x)\\
&=g^{\T}(1-\alpha,\delta)\,{^{\T}}{\mathds{I}}_{a+}^{\alpha-\delta;\psi}f(x).
\end{split}
\end{equation}
Using the relation
\begin{equation*}
\dt f(x)={^{\T}}{\mathds{I}}_{a+}^{\alpha-\delta;\psi}\,\dga f(x)
\end{equation*}
with $\gamma=\alpha+\beta(1-\alpha)$ and (\ref{I}), we get that
\begin{eqnarray*}
\dt\,{^{\T}}{\mathds{I}}_{a+}^{\delta;\psi}f(x)
&=&{^{\T}}{\mathds{I}}_{a+}^{\gamma-\alpha;\psi}\,
\dg\,{^{\T}}{\mathds{I}}_{a+}^{\delta;\psi}f(x)\\
&=&g^{\T}(1-\alpha,\delta)\,\,g^{\T}(\gamma-\alpha,\gamma
-\delta){\mathds{I}}_{a+}^{2\gamma-\alpha-\delta;\psi}f(x),
\end{eqnarray*}
which proves the intended equality.
\end{proof}

We now approach the $\Delta$-Laplace 
transform on times scales and integration by parts.

Let $p\in R(\T,\mathbb{R})$. We define the exponential function by
\begin{equation*}
e_{p}(t,s)=exp\left( \int_{a}^{t} x_{\mu(\tau)} (p(\tau))\Delta\tau \right).
\end{equation*}

\begin{definition} 
Let $f,\psi:[0,\infty)\rightarrow \mathbb{R}$ be real valued functions 
such that $\psi$ is a nonnegative increasing function with $\psi(0)=0$. 
Then the Laplace transform of $f$ with respect to $\psi$ is defined by
\begin{equation*}
\mathscr{L}_{\psi}(f(t))=F(s)=\int_{0}^{\infty} e^{-s \psi(t)} \psi'(t) f(t) dt
\end{equation*}
for all $s\in \mathbb{C}$ for which this integral converges. 
Here, $\mathscr{L}_{\psi}(\cdot)$ denotes the Laplace transform with respect 
to $\psi$, which we call the generalized Laplace transform.
\end{definition}

\begin{corollary}
\label{cor-20}
If $f(t)$ is a function whose classical Laplace transform is $F(s)$, 
then the generalized Laplace transform of function 
$f\circ \psi=f(\psi(t))$ is also $F(s)$:
\begin{equation*}
\mathscr{L}[f(t)]=F(s) \quad \Rightarrow \quad \mathscr{L}_{\psi}[f(\psi(t))]=F(s).
\end{equation*}
\end{corollary}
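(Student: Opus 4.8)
The plan is to prove this by a direct change of variables in the defining integral. First I would unfold the definition of the generalized Laplace transform applied to the composition $f\circ\psi$, namely
\[
\mathscr{L}_{\psi}[f(\psi(t))]=\int_{0}^{\infty} e^{-s\psi(t)}\,\psi'(t)\,f(\psi(t))\,dt,
\]
where the weight $\psi'(t)$ and the exponent $\psi(t)$ both come straight from the definition of $\mathscr{L}_{\psi}$, and the argument of $f$ is now $\psi(t)$ rather than $t$.

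The key step is the substitution $u=\psi(t)$, which yields $du=\psi'(t)\,dt$. Since $\psi$ is nonnegative, increasing, and satisfies $\psi(0)=0$, the lower limit $t=0$ maps to $u=0$, and as $t\to\infty$ the increasing function $\psi$ drives $u\to\infty$, so the upper limit is preserved. Under this substitution the integrand $e^{-s\psi(t)}\psi'(t)f(\psi(t))\,dt$ collapses precisely into $e^{-su}f(u)\,du$, and the integral becomes
\[
\int_{0}^{\infty} e^{-su}\,f(u)\,du=\mathscr{L}[f(u)]=F(s),
\]
which is exactly the classical Laplace transform of $f$, equal to $F(s)$ by hypothesis. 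This establishes the claimed identity $\mathscr{L}_{\psi}[f(\psi(t))]=F(s)$.

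The main (and only minor) obstacle is justifying the change of variables rigorously: one must ensure that $\psi$ is a bijection of $[0,\infty)$ onto $[0,\infty)$ so that the transformed limits are genuinely $0$ and $\infty$ and the whole half-line is covered exactly once. This is guaranteed by the standing assumptions that $\psi$ is nonnegative and increasing with $\psi(0)=0$ (together with $\psi(t)\to\infty$ as $t\to\infty$, so that no portion of the range is lost). One should also note that the manipulation is legitimate precisely on the set of $s$ for which the original integral defining $F(s)$ converges, so no extra hypothesis on $f$ beyond the existence of its classical transform is needed.
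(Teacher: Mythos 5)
Your proof is correct and is precisely the intended argument: the paper states this corollary immediately after the definition of $\mathscr{L}_{\psi}$ with no proof at all, treating it as a direct consequence, and the change of variables $u=\psi(t)$, $du=\psi'(t)\,dt$ is exactly the computation being left to the reader. One genuinely useful point you add is the observation that the substitution only covers the full half-line if $\psi(t)\to\infty$ as $t\to\infty$; the paper's definition assumes only that $\psi$ is nonnegative, increasing, and $\psi(0)=0$, which admits bounded $\psi$ for which the identity fails, so your flagged hypothesis (together with strict monotonicity, to make $\psi$ a genuine bijection of $[0,\infty)$ onto itself) is a real and needed supplement to the stated assumptions.
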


\begin{definition}
For $f:\T\to\R$, the time-scale or generalized transform of $f$, 
denoted by $\mathscr{L}[f]$ or $F(z)$, is given by
\begin{equation*}
^{\rm{\T}}\mathscr{L}[f](z)=F(z):=\int_{0}^{\infty}f(t)\psi^{\sigma}(t)\Delta t,
\end{equation*}
where $\psi(t)={\rm e}_{\theta z}(t,0)$
$\big(\psi^{\sigma}(t)={\rm e}_{\theta z}(\sigma(t))\big)$, that is,
\begin{equation*}
^{\rm{\T}}\mathscr{L}[f](z)=F(z)
:=\int_{0}^{\infty}f(t){\rm e}_{\theta z}(\sigma(t))\Delta t.
\end{equation*}
\end{definition}

\begin{theorem}[Inversion of the transform]
Suppose that $F(z)$ is analytic in the region 
${\rm Re}_{\mu}(z)>{\rm Re}_{\mu}(c)$ and $F(z)\to 0$ 
as $|z|\to\infty$ in this region. Furthermore, suppose $F(z)$ 
has finitely many regressive poles of finite order $\{z_1,z_2,\ldots,z_n\}$ 
and $\tilde{F}_{\R}(z)$ is the transform of the function $\tilde{f}(t)$ on $\R$ 
that correspondents to the transform $F(z)=F_{\T}(z)$ of $f(t)$ on $\T$. If
\begin{equation*}
\int_{c-i\infty}^{c+i\infty}|\tilde{F}_{\R}(z)||{\rm d}z|<\infty,
\end{equation*}
then
\begin{equation*}
f(t)=\sum_{i=1}^{n}{\rm Res}_{z=z_i}{\rm e}_{z}(t,0)F(z)
\end{equation*}
has transform $F(z)$ for all $z$ with ${\rm Re}(z)>c$.
\end{theorem}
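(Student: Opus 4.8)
The plan is to mirror the classical complex-inversion (Bromwich) argument on $\R$ and then transport the conclusion to the time scale $\T$ via the correspondence $F_{\T}(z)=F(z)\mapsto\tilde{F}_{\R}(z)$ between the two transforms. First I would recall that, on the real line, the function $\tilde{f}(t)$ associated with $\tilde{F}_{\R}(z)$ is recovered by the Bromwich integral
\begin{equation*}
\tilde{f}(t)=\frac{1}{2\pi i}\int_{c-i\infty}^{c+i\infty}{\rm e}^{zt}\,\tilde{F}_{\R}(z)\,{\rm d}z,
\end{equation*}
whose absolute convergence is guaranteed precisely by the hypothesis $\int_{c-i\infty}^{c+i\infty}|\tilde{F}_{\R}(z)|\,|{\rm d}z|<\infty$.

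Next I would close the vertical contour to the left by a semicircular arc $C_R$ of radius $R$ centred at $z=c$. Because $F(z)\to 0$ as $|z|\to\infty$ in the region ${\rm Re}_{\mu}(z)>{\rm Re}_{\mu}(c)$, a Jordan-type estimate forces the contribution of $C_R$ to vanish as $R\to\infty$, so that the Bromwich integral equals the integral over the closed contour. Since $\tilde{F}_{\R}$ possesses only the finitely many poles $\{z_1,\dots,z_n\}$, all lying to the left of the line ${\rm Re}(z)=c$, the Cauchy residue theorem then gives
\begin{equation*}
\tilde{f}(t)=\sum_{i=1}^{n}{\rm Res}_{z=z_i}\,{\rm e}^{zt}\,\tilde{F}_{\R}(z).
\end{equation*}

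The decisive step is the transfer back to $\T$. Here I would invoke the correspondence between the $\R$- and $\T$-transforms: under it the generalized time-scale exponential ${\rm e}_{z}(t,0)$ plays exactly the role of ${\rm e}^{zt}$, while the regressive poles of $F$ coincide with the poles of $\tilde{F}_{\R}$, with their orders preserved. Replacing ${\rm e}^{zt}$ by ${\rm e}_{z}(t,0)$ in each residue therefore produces the candidate
\begin{equation*}
f(t)=\sum_{i=1}^{n}{\rm Res}_{z=z_i}\,{\rm e}_{z}(t,0)\,F(z),
\end{equation*}
and I would close the argument by confirming that this $f$ has transform $F(z)$ for ${\rm Re}(z)>c$, computing ${^{\T}}\mathscr{L}[f](z)$ term by term through linearity and the known transforms of the elementary exponentials attached to each $z_i$, and checking that the result reassembles the residue decomposition of $F(z)$.

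The hard part will be this last transfer: justifying rigorously that substituting ${\rm e}_{z}(t,0)$ for ${\rm e}^{zt}$ inside the residue calculus yields a genuine function on $\T$ whose transform is the prescribed $F$. This is exactly where the analyticity of $F$, its decay at infinity, and the regressivity of the poles are needed, since they control the correspondence and rule out spurious contributions arising from the scattered points of $\T$.
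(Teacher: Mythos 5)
First, a point of comparison: the paper itself does not prove this theorem at all. It is stated as a recalled result from the time-scale Laplace transform literature (it is the inversion theorem of Davis, Gravagne, Jackson, Marks and Ramos, the tool underlying \cite{Bastos}), so your proposal cannot be matched against an in-paper argument and must be judged on its own merits. On those merits, your overall route --- classical Bromwich inversion and the residue theorem on $\R$, followed by a transfer to $\T$ through the correspondence $F_{\T}\leftrightarrow\tilde{F}_{\R}$ --- is indeed the strategy used in that literature.

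However, your write-up stops exactly at the step that constitutes the proof, and you say so yourself. The transfer to $\T$ cannot be justified by ``replacing ${\rm e}^{zt}$ by ${\rm e}_{z}(t,0)$ in each residue''; what is needed is an argument producing, from the hypotheses, a finite list of elementary transform pairs to which such a replacement provably applies. The missing idea is a partial-fraction (Liouville) argument: since $F$ is analytic apart from the finitely many finite-order poles $z_1,\dots,z_n$ and $F(z)\to 0$ as $|z|\to\infty$, the difference between $F$ and the sum of its principal parts is entire and vanishes at infinity, hence identically zero; thus $F(z)=\sum_{i,k}a_{ik}(z-z_i)^{-k}$. Regressivity of each $z_i$ guarantees that ${\rm e}_{z_i}(t,0)$ exists, and the known elementary pairs on $\T$ (the transform of ${\rm e}_{z_i}(\cdot,0)$ is $(z-z_i)^{-1}$, with parameter differentiation handling the higher-order terms $(z-z_i)^{-k}$) then show, term by term, that $\sum_{i}{\rm Res}_{z=z_i}{\rm e}_{z}(t,0)F(z)$ has transform $F(z)$ for ${\rm Re}(z)>c$. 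With this decomposition the verification you postpone becomes a finite computation, no contour integration is ever performed on $\T$, and the coincidence of the poles of $\tilde{F}_{\R}$ and $F$ (with orders), which you assert without justification, holds by construction of the correspondence. Note also that your Jordan-lemma step is shaky as written: the decay hypothesis is given in the region ${\rm Re}_{\mu}(z)>{\rm Re}_{\mu}(c)$, i.e.\ to the right of the Bromwich line, whereas closing the contour to the left requires control on the left arcs; the partial-fraction route avoids this difficulty entirely.
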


Our main purpose here is to propose an extension 
of the Laplace transform on time scales using $\psi$.

The operators $\dt$, $\dga$, $\dca$ and ${^{\T}}{\mathds{I}}_{a+}^{\alpha;\psi}$ 
can be written as the conjugation of the standard fractional operators
with the operation of composition with $\psi$ or $\psi^{-1}$, given by
\begin{equation}
\label{90}
\begin{split}
{^{\T}}{\mathds{I}}_{a+}^{\alpha;\psi}
&=Q_{\psi}\circ{^{\T}}{\mathds{I}}_{a+}^{\alpha;\psi(a)}
\circ(Q_{\psi})^{-1},\\
\dga &=Q_{\psi}\circ \dgaa\circ(Q_{\psi})^{-1},\\
\dca &=Q_{\psi}\circ \dcaa\circ(Q_{\psi})^{-1},
\end{split}
\end{equation}
and
$$
\dt=Q_{\psi}\circ \dta\circ(Q_{\psi})^{-1},
$$
where the functional operator $Q_{\psi}$ is given by
\begin{equation*}
(Q_{\psi} f)(x)=f(\psi(x)).
\end{equation*}

\begin{definition} 
Let $f,\psi:\T\to\R$ be such that $\psi$ is a nonnegative increasing function with $\psi(0)=0$. 
Then, the time scale generalized transform of $f$ with respect to $\psi$ is defined by
\begin{equation*}
^{\rm{\T}}\mathscr{L}[f](z)=F(z)
:=\int_{0}^{\infty}f(t)\psi^{\Delta}(t)\psi^{\sigma}(t)\Delta t,
\end{equation*}
where $\psi(\psi(t))={\rm e}_{\theta z}(\psi(t),0)$
$(\psi^{\sigma}(\psi(t))={\rm e}_{\theta z}(\sigma(\psi(t)))$, that is,
\begin{equation*}
^{\rm{\T}}\mathscr{L}[f](z)=F(z)
:=\int_{0}^{\infty}f(t)\psi^{\Delta}(t){\rm e}_{\theta z}(\sigma(\psi(t)))\Delta t.    
\end{equation*}
\end{definition}

Next, we prove an integration by parts formula
for the $\psi$-Riemann-Liouville fractional integral on times scales.

\begin{theorem}
\label{th-4}
Let $\alpha>0$, $p,q\geq{1}$ and $\frac{1}{p}+\frac{1}{q}\leq 1+\alpha$, 
where $p\neq{1}$ and $q=1+n$ in the case when $\frac{1}{p}+\frac{1}{q}=1+\alpha$. 
Moreover, let
\begin{equation*}
{^{\T}}{\mathds{I}}_{a+}^{\alpha;\psi}(L_p)
=\Big\{f:f={^{\T}}{\mathds{I}}_{a+}^{\alpha;\psi}g,\,\, 
g\in L_p(a,b)\Big\}.
\end{equation*}
The following integration by parts formulas hold: 
if $\varphi\in L_p(a,b)$ and $\phi\in L_q(a,b)$, then
\begin{equation*}
\int_{a}^{b}\bigg({^{\T}}{\mathds{I}}_{a+}^{\alpha;\psi}\phi(t)\bigg)\varphi(t)\Delta t
= \int_{a}^{b}\phi(t)\psi^{\Delta}(t)\,{^{\T}}{\mathds{I}}_{b-}^{\alpha;\psi}
\l(\frac{\varphi(t)}{\psi^{\Delta}(t)}\r) \Delta t.
\end{equation*}
\end{theorem}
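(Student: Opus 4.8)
The plan is to reduce the identity to a single application of Fubini's theorem for the delta integral, in exactly the same spirit as the proof of Proposition~\ref{A}. First I would expand the left-hand side using the definition \eqref{2} of the left $\psi$-Riemann--Liouville fractional integral, writing
$$
\int_{a}^{b}\l({^{\T}}{\mathds{I}}_{a+}^{\alpha;\psi}\phi(t)\r)\varphi(t)\,\Delta t
=\frac{1}{\Gamma(\alpha)}\int_{a}^{b}\varphi(t)\int_{a}^{t}\psi^{\Delta}(s)(\psi(t)-\psi(s))^{\alpha-1}\phi(s)\,\Delta s\,\Delta t,
$$
which is a double delta integral over the triangular region $\{(s,t):a\le s\le t\le b\}$.

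Second, I would interchange the order of integration by Fubini's theorem, converting the inner bounds $a\le s\le t$ into $s\le t\le b$ and the outer into $a\le s\le b$, to obtain
$$
\frac{1}{\Gamma(\alpha)}\int_{a}^{b}\phi(s)\psi^{\Delta}(s)\l(\int_{s}^{b}(\psi(t)-\psi(s))^{\alpha-1}\varphi(t)\,\Delta t\r)\Delta s.
$$
Third, I would recognise the bracketed integral as a right-sided $\psi$-Riemann--Liouville fractional integral. Inserting the factor $\psi^{\Delta}(t)/\psi^{\Delta}(t)=1$ into the integrand, the right-sided analogue of Definition~\ref{B} gives
$$
\int_{s}^{b}(\psi(t)-\psi(s))^{\alpha-1}\varphi(t)\,\Delta t
=\Gamma(\alpha)\,{^{\T}}{\mathds{I}}_{b-}^{\alpha;\psi}\l(\frac{\varphi}{\psi^{\Delta}}\r)(s).
$$
Substituting this back cancels the outer factor $1/\Gamma(\alpha)$, and relabelling $s\to t$ yields precisely the claimed formula.

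The step I expect to be the main obstacle is the justification of the Fubini interchange: one must verify that the double integrand is absolutely (delta-)integrable over the triangle, which is exactly where the hypotheses $p,q\ge 1$ and $\tfrac1p+\tfrac1q\le 1+\alpha$ (with the limiting case handled by $p\neq 1$, $q=1+n$) enter. These are the classical conditions, in the spirit of Hardy--Littlewood and the Euclidean case $\T=\R$, guaranteeing that ${^{\T}}{\mathds{I}}_{a+}^{\alpha;\psi}$ maps $L_p(a,b)$ boundedly into a space in duality with $L_q(a,b)$, so that the product $\l({^{\T}}{\mathds{I}}_{a+}^{\alpha;\psi}\phi\r)\varphi$ is integrable. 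Concretely, I would first establish finiteness of the double integral by combining the boundedness estimate already proved for the fractional integral with H\"older's inequality on the time scale, and only then invoke the time-scale Fubini theorem used in Proposition~\ref{A}; the remaining algebra is the routine matching of kernels described above.
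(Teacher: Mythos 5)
Your proposal is correct and follows essentially the same route as the paper's own proof: expand the left-hand side via Definition~\ref{B}, interchange the order of integration by Fubini's theorem, and recognise the resulting inner integral as the right-sided operator ${^{\T}}{\mathds{I}}_{b-}^{\alpha;\psi}$ applied to $\varphi/\psi^{\Delta}$. If anything, you are more careful than the paper, which performs the interchange silently (with some sloppy relabelling of variables) and never addresses the integrability justification where the hypotheses on $p$, $q$ and $\alpha$ actually enter.
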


\begin{proof} 
If $\varphi\in L_{p}(a,b)$ and $\phi\in L_q(a,b)$, 
then, from \eqref{2} of Definition~\ref{B}, it follows that
\begin{eqnarray*}
\int_{a}^{b}\bigg({^{\T}}{\mathds{I}}_{a+}^{\alpha;\psi}\phi(t)\bigg)\varphi(t)\Delta t
&=& \int_{a}^{b}\bigg(\frac{1}{\Gamma(\alpha)}\int_{a}^{t}\psi^{\Delta}(s)(\psi(t)-\psi(s))^{\alpha-1}
\phi(s) \Delta s\bigg)\varphi(t)\Delta t\\
&=&\int_{a}^{b}\frac{1}{\Gamma(\alpha)}\int_{t}^{b}
\phi(t)\psi^{\Delta}(t)(\psi(s)-\psi(t))^{\alpha-1}\varphi(t)\Delta t\\
&=&\int_{a}^{b}\phi(t)\psi^{\Delta}(t)\,{^{\T}}{\mathds{I}}_{b-}^{\alpha;\psi}
\l(\frac{\varphi(t)}{\psi^{\Delta}(t)}\r) \Delta t.
\end{eqnarray*}
\end{proof}


\section{Existence and uniqueness}
\label{sec5}

Now we investigate the question of existence 
and uniqueness of solution to problem \eqref{I*}.

\begin{lemma} 
Let $0<\alpha<1$, $J\subseteq\T$, and $f:J\times\R\to\R$.  
We say that $y(t)$ is a solution of problem \eqref{I*} 
if, and only if, this function is a solution of
\begin{eqnarray}
\label{II}
y(t)=\frac{1}{g^{\T}(\gamma-1,1-\gamma)} \frac{1}{\Gamma(\alpha)}\int_{0}^{t}
\psi^{\Delta}(s)(\psi(t)-\psi(s))^{\alpha-1}f(s,y(s))\Delta s,
\end{eqnarray}
where $g^{\T}(\gamma-1,1-\gamma):=\dfrac{B^{\T}_{0,1}(\gamma-1,
1-\gamma)}{B(\gamma-1,1-\gamma)} $ with $\gamma=\alpha+\beta(1-\alpha)$.
\end{lemma}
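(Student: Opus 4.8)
The plan is to prove the equivalence by applying fractional \emph{integration} to the differential equation in the forward direction, and fractional \emph{differentiation} to the integral equation in the backward direction, letting the initial condition handle the boundary terms.

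For the necessity, I would assume $y$ solves \eqref{I*} and apply ${^{\T}}{\mathds{I}}_{0+}^{\alpha;\psi}$ to both sides of $\dTt y(t)=g^{\T}(\alpha,\gamma-\alpha)f(t,y(t))$. On the left-hand side I invoke Theorem~\ref{th-5} with $a=0$ and $n=1$ (precisely, the identity for ${^{\T}}{\mathds{I}}_{0+}^{\alpha;\psi}\dt$ that is produced inside its proof), which gives
\begin{equation*}
{^{\T}}{\mathds{I}}_{0+}^{\alpha;\psi}\,\dTt y(t)
= g^{\T}(\alpha,\gamma-\alpha)\,g^{\T}(\gamma-1,1-\gamma)\,y(t)
- g^{\T}(\alpha,\gamma-\alpha)\frac{(\psi(t)-\psi(0))^{\gamma-1}}{\Gamma(\gamma)}\,{^{\T}}{\mathds{I}}_{0+}^{1-\gamma;\psi}y(0).
\end{equation*}
The initial condition ${^{\T}}{\mathds{I}}_{0}^{1-\gamma;\psi}y(0)=0$ annihilates the boundary term. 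On the right-hand side, linearity of the integral pulls out the constant, leaving $g^{\T}(\alpha,\gamma-\alpha)\,{^{\T}}{\mathds{I}}_{0+}^{\alpha;\psi}f(t,y(t))$. Cancelling the common factor $g^{\T}(\alpha,\gamma-\alpha)$ and dividing by $g^{\T}(\gamma-1,1-\gamma)$ yields exactly \eqref{II}.

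For the sufficiency, I would assume $y$ satisfies \eqref{II} and apply $\dTt$ to both sides. By Proposition~\ref{prop-I} with $a=0$ and $n=1$ one has $\dTt\,{^{\T}}{\mathds{I}}_{0+}^{\alpha;\psi}f=g^{\T}(\gamma-1,1-\gamma)\,f$, so the prefactor $1/g^{\T}(\gamma-1,1-\gamma)$ cancels and the differential equation of \eqref{I*} is recovered. To recover the initial condition, I would apply ${^{\T}}{\mathds{I}}_{0+}^{1-\gamma;\psi}$ to \eqref{II}, use the semigroup-type identity of Remark~\ref{remarknovo} to merge the two integrals into a single one of order $1-\gamma+\alpha>0$, and then let $t\to 0+$: since this order is strictly positive, Lemma~\ref{lemma-4} forces the limit to vanish, giving ${^{\T}}{\mathds{I}}_{0}^{1-\gamma;\psi}y(0)=0$.

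The main obstacle, and the step demanding the most care, is the bookkeeping of the time-scale Beta-function factors $g^{\T}(\cdot,\cdot)$ that Remark~\ref{remarknovo} inserts each time two fractional integrals are composed; these constants must cancel consistently across both directions for the equivalence to close, and in particular the factor $g^{\T}(\alpha,\gamma-\alpha)$ placed on the right-hand side of \eqref{I*} is exactly what reduces the necessity computation to \eqref{II}. A secondary point is that one should work with the order-$\alpha$ form of Theorem~\ref{th-5} (the identity actually derived in its proof) rather than the order-$\gamma$ operator displayed in its statement, since it is $\alpha$ that matches the kernel $(\psi(t)-\psi(s))^{\alpha-1}$ appearing in \eqref{II}. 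The regularity needed for Lemma~\ref{lemma-4} to apply is supplied by the right-dense continuity and boundedness of $f$, which place $f(\cdot,y(\cdot))$ in the appropriate weighted space.
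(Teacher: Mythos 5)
Your proposal follows essentially the same route as the paper's own proof: in one direction, apply ${^{\T}}{\mathds{I}}_{0}^{\alpha;\psi}$ and use the order-$\alpha$ identity underlying Theorem~\ref{th-5} together with the initial condition to obtain \eqref{II}; in the other, apply $\dTt$ via Proposition~\ref{prop-I} to recover the dynamic equation and apply ${^{\T}}{\mathds{I}}_{0}^{1-\gamma;\psi}$ via Lemma~\ref{lemma-4} to recover the initial condition. Your write-up is in fact slightly more careful than the paper's — you flag the order-$\gamma$ versus order-$\alpha$ mismatch in the statement of Theorem~\ref{th-5} and track the $g^{\T}$ constants explicitly — although, exactly like the paper, your sufficiency step actually returns $\dTt y(t)=f(t,y(t))$ rather than $g^{\T}(\alpha,\gamma-\alpha)\,f(t,y(t))$, a factor discrepancy inherited from the paper's own formulation of \eqref{I*}.
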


\begin{proof} 
Applying the operator ${^{\T}}{\mathds{I}}_{0}^{\alpha;\psi}(\cdot)$  
to both sides of problem \eqref{I*}, using the initial condition 
and Theorem~\ref{th-5}, we have
\begin{eqnarray}
\label{2.5}
y(t)= \frac{1}{g^{\T}(\gamma-1,1-\gamma)} \frac{1}{\Gamma(\alpha)}
\int_{0}^{t}\psi^{\Delta}(s)(\psi(t)-\psi(s))^{\alpha-1} f(s,y(s))\Delta s.
\end{eqnarray}

On the other hand, applying the operator $\dTt(\cdot)$ on both sides 
of \eqref{2.5}, and using Proposition~\ref{prop-I}, we obtain
\begin{eqnarray*}
\dTt y(t)
&=&\dTt\left(\frac{1}{g^{\T}(\gamma-1,1-\gamma)\Gamma(\alpha)} \int_{0}^{t}
\psi^{\Delta}(s)(\psi(t)-\psi(s))^{\alpha-1}f(s,y(s))\,\Delta s\right)\notag\\
&=& f(t,y(t)).
\end{eqnarray*}
Now, taking ${^{\T}}{\mathds{I}}_{0}^{1-\gamma;\psi}(\cdot)$ on both sides 
of Eq.(\ref{2.5}) and using Lemma~\ref{lemma-4}, we get 
${^{\T}}{\mathds{I}}_{0}^{1-\gamma;\psi} y(0)=0$. The proof is complete.
\end{proof}

\begin{proof} (of Theorem~\ref{teorema32})
Let $S$ be the set of right-dense continuous functions 
and $J\subseteq\T$. For $y\in S$, define
\begin{equation*}
\|y\|_{C_{1-\gamma,\psi}}=\sup_{t\in J}\|(\psi(t)-\psi(0))^{1-\gamma}y(t)\|_{C}.
\end{equation*}

Note that $S$ is a Banach space. Define the subset 
$S_{\psi}(\rho)$ and the operator $\Theta$ by
\begin{equation*}
S_{\psi}(\rho)=\big\{x\in S:\|x_s\|_{C_{1-\gamma,\psi}}\leq\rho\big\}
\end{equation*}
and
\begin{equation*}
\Theta(y)= \frac{1}{g^{\T}(\gamma-1,1-\gamma)} \frac{1}{\Gamma(\alpha)} 
\int_{0}^{t}\psi^{\Delta}(s)(\psi(t)-\psi(s))^{\alpha-1}f(s,y(s))\Delta s.    
\end{equation*}

Using Proposition~\ref{Proposition24}, one has
\begin{eqnarray*}
|(\psi(t)-\psi(0))^{1-\gamma}\Theta(y(t))|
&=&\bigg|\frac{(\psi(t)-\psi(0))^{1-\gamma}}{g^{\T}(\gamma-1,1-\gamma)\Gamma(\alpha)}\int_{0}^{t}
\psi^{\Delta}(s)(\psi(t)-\psi(s))^{\alpha-1}f(s,y(s))\Delta s\bigg|\\
&\leq & \frac{(\psi(t)-\psi(0))^{1-\gamma}}{g^{\T}(\gamma-1,1-\gamma)\Gamma(\alpha)}\int_{0}^{t}
\psi^{\Delta}(s)(\psi(t)-\psi(s))^{\alpha-1}|f(s,y(s))|\Delta s\\
&\leq & M\,
\frac{(\psi(t)-\psi(0))^{1-\gamma}}{g^{\T}(\gamma-1,1-\gamma)\Gamma(\alpha)}\int_{0}^{t}
\psi^{\Delta}(s)(\psi(t)-\psi(s))^{\alpha-1}\Delta s.
\end{eqnarray*}
Since $\psi^{\Delta}(s)(\psi(t)-\psi(s))^{\alpha-1}$ 
is an increasing function, by Proposition~\ref{Prop-6} it follows that
\begin{equation*}
\int_{0}^{t}\psi^{\Delta}(s)(\psi(t)-\psi(s))^{\alpha-1}\Delta s
\leq\int_{0}^{t} \psi'(s)(\psi(t)-\psi(s))^{\alpha-1}{\rm d}s.
\end{equation*}

Consequently, one has
\begin{eqnarray*}
|(\psi(t)-\psi(0))^{1-\gamma}\Theta(y(t))|
&\leq & M\,
\frac{(\psi(t)-\psi(0))^{1-\gamma}}{g^{\T}(\gamma-1,1-\gamma)\Gamma(\alpha)}\int_{0}^{t}
\psi'(s)(\psi(t)-\psi(s))^{\alpha-1}{\rm d}s\\
&=&\frac{M}{g^{\T}(\gamma-1,1-\gamma)\Gamma(\alpha)}
\frac{(\psi(t)-\psi(0))^{\alpha}}{\alpha}\,(\psi(t)-\psi(0))^{1-\gamma}\\
&\leq &M\,\frac{(\psi(1)-\psi(0))^{1-\beta(1-\alpha)}}{\Gamma(\alpha+1)},
\end{eqnarray*}
that is,
$$
\|\Theta y\|_{C_{1-\gamma,\psi}}\leq M\,\frac{(\psi(1)
-\psi(0))^{1-\beta(1-\alpha)}}{g^{\T}(\gamma-1,1-\gamma)\Gamma(\alpha+1)}.
$$

Now, we consider
\begin{equation*}
\rho=M\,
\frac{(\psi(1)-\psi(0))^{1-\beta(1-\alpha)}}{g^{\T}(\gamma-1,1-\gamma)\Gamma(\alpha+1)}.
\end{equation*}
We have that $\Theta$ is an operator from $S_{\psi}(\rho)$ to $S_{\psi}(\rho)$. 
Moreover, for $x,y\in S_{\psi}(\rho)$, yields
\begin{equation}
\label{star*}
\begin{split}
\|(\psi(t)-\psi(0))^{1-\gamma}(\Theta x(t)- \Theta y(t))\| 
&\leq \frac{(\psi(t)-\psi(0))^{1-\gamma}}{g^{\T}(\gamma-1,1-\gamma)\Gamma(\alpha)}
\int_{0}^{t}\psi^{\Delta}(s)(\psi(t)-\psi(s))^{\alpha-1}|f(s,x(s))-f(s,y(s))|\Delta s\\
&\leq {L}\,\frac{\|x-y\|_{\infty}(\psi(t)-\psi(0))^{1
-\gamma}}{g^{\T}(\gamma-1,1-\gamma)\Gamma(\alpha)}
\int_{0}^{t}\psi^{\Delta}(s)(\psi(t)-\psi(s))^{\alpha-1}\Delta s\\
&\leq {L} \frac{\|x-y\|_{C_{1-\gamma,\psi}}(\psi(t)
-\psi(0))^{1-\gamma}}{g^{\T}(\gamma-1,1-\gamma)\Gamma(\alpha)}
\int_{0}^{t}\psi'(s)(\psi(t)-\psi(s))^{\alpha-1}{\rm d}s\\
&= {L}\,\frac{\|x-y\|_{C_{1-\gamma,\psi}}}{g^{\T}(\gamma-1,1-\gamma)
\Gamma(\alpha)}\frac{(\psi(t)-\psi(0))^{1-\beta(1-\alpha)}}{\alpha}\\
&\leq {L}\,\frac{(\psi(1)-\psi(0))^{\alpha}}{g^{\T}(\gamma-1,
1-\gamma)\Gamma(\alpha+1)}\|x-y\|_{C_{1-\gamma,\psi}}.
\end{split}
\end{equation}

It follows that
\begin{equation}
\label{A**}
\|\Theta x-\Theta y\|_{C_{1-\gamma,\psi}}
\leq {L}\,\frac{(\psi(1)-\psi(0))^{\alpha}}{g^{\T}(\gamma-1,1-\gamma)\Gamma(\alpha+1)}
\|x-y\|_{C_{1-\gamma,\psi}}. 
\end{equation}

Indeed, evaluating the supremum, for $t\in[0,1]$ of both sides of \eqref{star*}, 
and using the definition of the norm in the weighted space, 
we have \eqref{A**}. If $\displaystyle {L}\,\frac{(\psi(1)
-\psi(0))^{\alpha}}{g^{\T}(\gamma-1,1-\gamma)\Gamma(\alpha+1)}<1$, 
then this will be a contraction map and we obtain the desired existence 
and uniqueness of solution to problem \eqref{I*}.
\end{proof}

\begin{remark}
\label{remark-2}
(i) Taking the limit $\beta\to 0$ in \eqref{I*}, 
we obtain a $\psi$-Caputo fractional derivative problem on times scales. 
Using Theorem~\ref{teorema32}, a solution to such problem exists and is unique.
(ii) Taking $\beta\to 1$ in \eqref{I*}, we get a corresponding problem on time scales in the 
$\psi$-Riemann--Liouville fractional derivative sense. Under the conditions of Theorem~\ref{teorema32}, 
we conclude that such problem admits a unique solution. (iii) From the choice of $g(\cdot)$, 
we obtain numerous particular cases for problem \eqref{I*}, for which our Theorem~\ref{teorema32}
provides a sufficient condition for existence of a unique solution.
\end{remark}

\begin{proof} (of Theorem~\ref{th-3.4})
We prove the result in four steps. 

\noindent {\rm Step 1:} $\Theta$ is continuous.  

Consider the a sequence $y_n$ that $y_n\to y$ in $C(J,\R)$. So, for each $t\in J$, yields
\begin{eqnarray*}
&&|(\psi(t)-\psi(0))^{1-\gamma}(\Theta(y_n)(t)
-\Theta(y)(t))|\\
&\leq &\frac{(\psi(t)-\psi(0))^{1-\gamma}}{g^{\T}(\gamma-1,1-\gamma)\Gamma(\alpha)}\int_{0}^{t}
\psi^{\Delta}(s)(\psi(t)-\psi(s))^{\alpha-1}|f(s,y_n(s))-f(s,y(s))|\Delta s\\
&\leq &\frac{(\psi(t)-\psi(0))^{1-\gamma}}{g^{\T}(\gamma-1,1-\gamma)\Gamma(\alpha)}\int_{0}^{t}
\psi^{\Delta}(s)(\psi(t)-\psi(s))^{\alpha-1}\sup_{s\in J}|f(s,y_n(s))-f(s,y(s))|\Delta s\\
&\leq &\frac{\|f(\cdot,y_n(\cdot))-f(\cdot,y(\cdot))\|_{C_{1-\gamma,\psi}}}{g^{\T}(\gamma-1,1-\gamma)\Gamma(\alpha)}
\int_{0}^{t}\psi^{\Delta}(s)(\psi(t)-\psi(s))^{\alpha-1}\Delta s\\
&\leq & \frac{\|f(\cdot,y_n(\cdot))-f(\cdot,y(\cdot))\|_{C_{1-\gamma,\psi}}}{g^{\T}(\gamma-1,1-\gamma)\Gamma(\alpha)}
\int_{0}^{t}\psi'(s)(\psi(t)-\psi(s))^{\alpha-1}{\rm d}s\\
&\leq& \frac{\|f(\cdot,y_n(\cdot))-f(\cdot,y(\cdot))\|_{C_{1-\gamma,\psi}}}{g^{\T}(\gamma-1,1-\gamma)\Gamma(\alpha)}\,
(\psi(1)-\psi(0))^{\alpha}.
\end{eqnarray*}

Since $f$ is a continuous function, we obtain that
\begin{equation*}
\|\Theta y_n-\Theta y\|_{C_{1-\gamma,\psi}}
\leq\frac{(\psi(1)-\psi(0))^{\alpha}}{g^{\T}(\gamma-1,1-\gamma)\Gamma(\alpha+1)} 
\|f(\cdot,y_n(\cdot))-f(\cdot,y(\cdot))\|_{C_{1-\gamma,\psi}}\to 0
\end{equation*}
as $n\to\infty$.

\noindent{\rm Step 2:} The map $\Theta$ sends bounded sets 
into bounded sets in $C(J,\R)$. To see that, it is enough 
to show that for any $\rho$, there exists a positive constant $\ell$ 
such that for each $y\in B_{\rho}=\{y\in C_{1-\gamma,\psi}(J,\R)
:\|y\|_{C_{1-\gamma,\psi}}\leq\rho\}$ we have $\|\Theta y\|_{C_{1-\gamma,\psi}}\leq\ell$. 

Indeed, by hypothesis, for each $t\in J$ one has
\begin{eqnarray*}
\|(\psi(t)-\psi(0))^{1-\gamma}\Theta y(t)\|&\leq &\frac{(\psi(t)
-\psi(0))^{1-\gamma}}{g^{\T}(\gamma-1,1-\gamma)\Gamma(\alpha)}
\int_{0}^{t}\psi^{\Delta}(s)(\psi(t)-\psi(s))^{\alpha-1}|f(s,y(s))|\Delta s\\
&\leq & M\,\frac{(\psi(t)-\psi(0))^{1-\gamma}}{g^{\T}(\gamma-1,1-\gamma)\Gamma(\alpha)}\int_{0}^{t}
\psi^{\Delta}(s)(\psi(t)-\psi(s))^{\alpha-1}\Delta s\\
&\leq & M\,\frac{(\psi(t)-\psi(0))^{1-\gamma}}{g^{\T}(\gamma-1,1-\gamma)\Gamma(\alpha)}
\int_{0}^{t} \psi'(s)(\psi(t)-\psi(s))^{\alpha-1}{\rm d}s\\
&\leq &\frac{M}{g^{\T}(\gamma-1,1-\gamma)\Gamma(\alpha+1)}(\psi(1)-\psi(0))^{1-\beta(1-\alpha)}=\ell.
\end{eqnarray*}

\noindent{\rm Step 3:} The map $\Theta$ sends bounded sets 
into equicontinuous sets of $C(J,\R)$. Let $t_1,t_2\in J$, $t_1<t_2$, 
$B_{\rho}$ be a bounded set of $C(J,\R)$ as in \rm{Step 2}, 
and $y\in B_{\rho}$. Then,
\begin{eqnarray*}
|(\Theta y)(t_2)-(\Theta y)(t_1)|&\leq 
&\frac{(\psi(t_2)-\psi(0))^{1-\gamma}}{g^{\T}(\gamma-1,1-\gamma)\Gamma(\alpha)}
\int_{0}^{t_1}\psi^{\Delta}(s)(\psi(t_1)-\psi(s))^{\alpha-1}|f(s,y(s))|\Delta s\\
&\quad&-\frac{(\psi(t_1)-\psi(0))^{1-\gamma}}{g^{\T}(\gamma-1,1-\gamma)\Gamma(\alpha)}
\int_{0}^{t_2}\psi^{\Delta}(s)(\psi(t_2)-\psi(s))^{\alpha-1}|f(s,y(s))|\Delta s\\
&\leq &M\,\frac{(\psi(t_2)-\psi(0))^{1-\gamma}}{g^{\T}(\gamma-1,1-\gamma)\Gamma(\alpha)}
\int_{0}^{t_1}\psi^{\Delta}(s)[(\psi(t_1)-\psi(s))^{\alpha-1}-(\psi(t_2)-\psi(s))^{\alpha-1}]\Delta s\\
&\quad & + M\,\frac{(\psi(t_1)-\psi(0))^{1-\gamma}}{g^{\T}(\gamma-1,1-\gamma)\Gamma(\alpha)}
\int_{t_1}^{t_2}\psi^{\Delta}(s)(\psi(t_2)-\psi(s))^{\alpha-1}\Delta s\\
&\leq & M\,\frac{(\psi(t_2)-\psi(0))^{1-\gamma}}{g^{\T}(\gamma-1,1-\gamma)\Gamma(\alpha)}
\int_{0}^{t_1}\psi'(s)[(\psi(t_1)-\psi(s))^{\alpha-1}-(\psi(t_2)-\psi(s))^{\alpha-1}]{\rm d}s\\
&\quad& +M\,\frac{(\psi(t_1)-\psi(0))^{1-\gamma}}{g^{\T}(\gamma-1,1-\gamma)\Gamma(\alpha)}\int_{t_1}^{t_2}
\psi'(s)(\psi(t_2)-\psi(s))^{\alpha-1}{\rm d}s\\
&\leq & M\,\frac{(\psi(t_2)-\psi(0))^{1-\gamma}}{g^{\T}(\gamma-1,1-\gamma)\Gamma(\alpha)}
\big[(\psi(t_2)-\psi(t_1))^{\alpha}+(\psi(t_1)-\psi(0))^{\alpha}-(\psi(t_2)-\psi(0))^{\alpha}\big]\\
&\quad& + M\,\frac{(\psi(t_1)-\psi(0))^{1-\gamma}}{g^{\T}(\gamma-1,1-\gamma)\Gamma(\alpha)}\,(\psi(t_2)-\psi(t_1))^{\alpha}\\
&=&\frac{2 M}{g^{\T}(\gamma-1,1-\gamma)\Gamma(\alpha+1)}\,(\psi(t_2)-\psi(0))^{\alpha}\big[(\psi(t_2)-\psi(0))^{1-\gamma}
-(\psi(t_1)-\psi(0))^{1-\gamma}\big]\\
&\quad&+\frac{M}{g^{\T}(\gamma-1,1-\gamma)\Gamma(\alpha+1)}\,(\psi(t_2)-\psi(0))^{1-\gamma}\big[\,(\psi(t_1)-\psi(0))^{\alpha}
-\,(\psi(t_2)-\psi(0))^{\alpha}\big]\rightarrow 0
\end{eqnarray*}
as $t_1\to t_2$. In this sense, we have $\Theta:C_{1-\gamma;\psi}(J,\R)\to C_{1-\gamma,\psi}(J,\R)$ 
is continuous and completely continuous (consequence of steps 1 to 3 and Arzela-Ascoli theorem).

\noindent{\rm Step 4:} A priory bounds. Now it remains to show that
$\Omega=\{y\in C(J,\R):y=\lambda \Theta(y);0<\lambda<1\}$ 
is a bounded set. Let $y\in\Omega$. Then, $y=\lambda \Theta(y)$ 
for some $0<\lambda<1$. Thus, for each $t\in J$, it yields that
\begin{equation*}
y(t)=\lambda\Bigg[\frac{1}{g^{\T}(\gamma-1,1-\gamma)\Gamma(\alpha)}
\int_{0}^{t}\psi^{\Delta}(s) (\psi(t)-\psi(s))^{\alpha-1}f(s,y(s))\Delta s\Bigg].    
\end{equation*}

Thus, we conclude via the estimate in Step~2. 
In this sense, $\Theta$ has a fixed point, hence a solution to the problem \eqref{I*}
(Schauder's fixed point theorem). 
\end{proof}


\section{Controllability}
\label{sec6}

In this section, we investigate the question of controllability 
for \eqref{eqI}. For this, first, we present the concept of 
controllability and the integral equation that is equivalent 
to the problem to be discussed. 

\begin{definition} 
We say that \eqref{eqI} is controllable on $J$ if, for any given initial state $y_0$
and any given final state $\bar{y}$, there exists a piecewise right-dense continuous function 
$u\in L^{2}(J,U)$ such that the solution $y$ of \eqref{eqI} satisfies $y(1)=\bar{y}$.
\end{definition}

\begin{theorem}  
A function $f\in C(J,\mathbb{R})$ is a solution of \eqref{eqI} if, and only if, 
this function is a solution of the following integral equation:
\begin{eqnarray*}
y(t)= \frac{1}{g^{\T}(\gamma-1,1-\gamma)} \frac{1}{\Gamma(\alpha)}
\int_{0}^{t}\psi^{\Delta}(s)(\psi(t)-\psi(s))^{\alpha-1}\big(f(s,y(s))+(Bu)(s)\big)\Delta s,
\end{eqnarray*}
$t\in[0,1]=J\subset\T$.
\end{theorem}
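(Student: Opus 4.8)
The plan is to reproduce, verbatim up to a single substitution, the argument already used to characterize the solutions of problem \eqref{I*}: since \eqref{eqI} is obtained from \eqref{I*} merely by replacing the nonlinearity $f(t,y(t))$ with $f(t,y(t))+(Bu)(t)$, and since every fractional operator in play is linear, the entire equivalence transfers with $f(s,y(s))$ replaced by $f(s,y(s))+(Bu)(s)$ in each integrand. Thus I only have to confirm that each step of the earlier proof is insensitive to this additive control term.

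For the direct implication, I would assume $y$ solves \eqref{eqI} and apply the $\psi$-Riemann--Liouville fractional integral ${^{\T}}{\mathds{I}}_{0}^{\alpha;\psi}$ to both sides of the dynamic equation. Writing $\dTt={^{\T}}{\mathds{I}}_{0}^{\gamma-\alpha;\psi}\,\dg$ as in Remark~\ref{remark-1:5:0} and invoking the time-scale semigroup identity of Remark~\ref{remarknovo}, the left-hand side becomes $g^{\T}(\alpha,\gamma-\alpha)\,{^{\T}}{\mathds{I}}_{0}^{\gamma;\psi}\,\dg y(t)$; one integration by parts (the $n=1$ instance of Theorem~\ref{th-5}) converts this into $g^{\T}(\alpha,\gamma-\alpha)\,g^{\T}(\gamma-1,1-\gamma)\,y(t)$ minus a boundary term carrying ${^{\T}}{\mathds{I}}_{0}^{1-\gamma;\psi}y(0)$, which the initial condition annihilates. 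The right-hand side equals $g^{\T}(\alpha,\gamma-\alpha)\,{^{\T}}{\mathds{I}}_{0}^{\alpha;\psi}\big(f(t,y(t))+Bu(t)\big)$; cancelling the common factor $g^{\T}(\alpha,\gamma-\alpha)$ and dividing by $g^{\T}(\gamma-1,1-\gamma)$ yields exactly the claimed integral equation.

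For the converse, I would start from the integral equation, recognize its right-hand side as $\tfrac{1}{g^{\T}(\gamma-1,1-\gamma)}\,{^{\T}}{\mathds{I}}_{0}^{\alpha;\psi}\big(f+Bu\big)$, and apply $\dTt$ to both sides. Proposition~\ref{prop-I} collapses $\dTt\,{^{\T}}{\mathds{I}}_{0}^{\alpha;\psi}$ to multiplication by $g^{\T}(\gamma-1,1-\gamma)$, restoring the dynamic equation of \eqref{eqI}. To recover the initial condition I would instead apply ${^{\T}}{\mathds{I}}_{0}^{1-\gamma;\psi}$ to the integral equation; by the semigroup property this raises the order to $1-\gamma+\alpha=1-\beta(1-\alpha)>0$, and Lemma~\ref{lemma-4} forces the resulting expression to vanish at $t=0$, giving ${^{\T}}{\mathds{I}}_{0}^{1-\gamma;\psi}y(0)=0$.

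I expect the only genuinely delicate point to be the bookkeeping of the time-scale Beta-function factors $g^{\T}(\cdot,\cdot)$. On a general time scale the fractional integrals compose only up to such a factor (Proposition~\ref{A} and Remark~\ref{remarknovo}), so the prefactor $g^{\T}(\alpha,\gamma-\alpha)$ built into \eqref{eqI} and the prefactor $1/g^{\T}(\gamma-1,1-\gamma)$ in the integral equation must be tracked with care to verify that they cancel exactly and leave no spurious constant in either direction. Everything else is a direct transcription of the already-established characterization of solutions to \eqref{I*}, with the control term $Bu$ riding along linearly.
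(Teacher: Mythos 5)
Your proposal is correct and follows essentially the same route as the paper: the forward direction applies ${^{\T}}{\mathds{I}}_{0}^{\alpha;\psi}$ and invokes Theorem~\ref{th-5} (in its $n=1$ instance) together with the initial condition, while the converse applies $\dTt$ with Proposition~\ref{prop-I} and recovers the initial condition via ${^{\T}}{\mathds{I}}_{0}^{1-\gamma;\psi}$ and Lemma~\ref{lemma-4} --- precisely the paper's steps, which you merely unpack in greater detail (explicit semigroup identity and boundary term). Your closing caution about tracking the $g^{\T}(\cdot,\cdot)$ factors is well placed, since that is exactly where the paper's own bookkeeping is loosest (its converse step yields $\dTt y = f + Bu$ without the factor $g^{\T}(\alpha,\gamma-\alpha)$ appearing in \eqref{eqI}, an issue your argument shares with, rather than introduces beyond, the published proof).
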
 

\begin{proof} 
Applying the operator ${^{\T}}{\mathds{I}}_{0}^{\alpha;\psi}(\cdot)$  
to both sides of problem \eqref{eqI}, using the initial condition 
and Theorem~\ref{th-5}, we have
\begin{eqnarray}
\label{2.54}
y(t)= \frac{1}{g^{\T}(\gamma-1,1-\gamma)} \frac{1}{\Gamma(\alpha)}
\int_{0}^{t}\psi^{\Delta}(s)(\psi(t)-\psi(s))^{\alpha-1}\big(f(s,y(s))+(Bu)(s)\big)\Delta s.
\end{eqnarray}

On the other hand, applying the operator $\dTt(\cdot)$ on both sides 
of \eqref{2.54}, and using Proposition~\ref{prop-I}, we obtain
\begin{eqnarray*}
\dTt y(t)
&=&\dTt\left(\frac{1}{g^{\T}(\gamma-1,1-\gamma)\Gamma(\alpha)} \int_{0}^{t}
\psi^{\Delta}(s)(\psi(t)-\psi(s))^{\alpha-1}(f(s,y(s))+(Bu)(s))\Delta s\right)\notag\\
&=& f(t,y(t))+(Bu)(t).
\end{eqnarray*}
Now, taking ${^{\T}}{\mathds{I}}_{0}^{1-\gamma;\psi}(\cdot)$ on both sides of 
the Eq.(\ref{2.54}) and using Lemma~\ref{lemma-4}, we get 
${^{\T}}{\mathds{I}}_{0}^{1-\gamma;\psi} y(0)=0$.
The proof is complete.
\end{proof}

\begin{lemma}
\label{lemma 3.4}
Let the assumptions ${\rm (A_1)}$--${\rm (A_4)}$ be satisfied and 
$y(0)\in\R$ be an arbitrary point. Then the solution $y(t)$ 
of system \eqref{eqI} on $[0,1]$ is defined by the control function
$$
u(t)=(\mathscr{W}_{\alpha})^{-1}\bigg[y_1- \frac{1}{g^{\T}(\gamma-1,1-\gamma)\Gamma(\alpha)}
\int_{0}^{t}\psi^{\Delta}(s)(\psi(t)-\psi(s))^{\alpha-1}f(s,y(s))\Delta s\bigg], 
$$
$t\in[0,1]$. Moreover, the control function $u(t)$ has an estimate 
$\|u(t)\|\leq M_{u}^{\circ}$ with
\begin{equation*}
M_{u}^{\circ}=|y_1|+\frac{M}{g^{\T}(\gamma-1,1-\gamma)\Gamma(\alpha+1)}(\psi(t)-\psi(0))^{\alpha}.
\end{equation*}
\end{lemma}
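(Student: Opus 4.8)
The plan is to verify the claimed control function by direct substitution into the integral equation and then estimate its norm using the earlier bounds. I would proceed as follows.

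First, I would recall from the equivalent integral equation (equation \eqref{2.54}) that any solution of \eqref{eqI} must satisfy
\begin{equation*}
y(t)= \frac{1}{g^{\T}(\gamma-1,1-\gamma)\Gamma(\alpha)}
\int_{0}^{t}\psi^{\Delta}(s)(\psi(t)-\psi(s))^{\alpha-1}\big(f(s,y(s))+(Bu)(s)\big)\Delta s.
\end{equation*}
The goal of controllability is to steer the system to the prescribed final value $y(1)=y_1$. Evaluating the integral equation at $t=1$ and separating the forcing term $f$ from the control term $Bu$, I would isolate the contribution of the control, which by the definition in hypothesis $(A_4)$ is precisely $\mathscr{W}_{\alpha}u$. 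Solving the resulting identity $y_1 = (\text{term in } f) + \mathscr{W}_{\alpha}u$ for $u$, and invoking the existence of the bounded inverse $(\mathscr{W}_{\alpha})^{-1}$ guaranteed by $(A_4)$, yields exactly the stated formula for $u(t)$. This algebraic inversion is the core of the construction.

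Next, for the norm estimate, I would apply the bound $\|(\mathscr{W}_{\alpha})^{-1}\|\leq M_W$ together with the triangle inequality to the bracketed expression defining $u(t)$. The first piece contributes $|y_1|$. For the integral piece involving $f$, I would invoke hypothesis $(A_2)$, namely $|f(s,y(s))|\leq M$, and then estimate
\begin{equation*}
\frac{1}{g^{\T}(\gamma-1,1-\gamma)\Gamma(\alpha)}\int_{0}^{t}\psi^{\Delta}(s)(\psi(t)-\psi(s))^{\alpha-1}\Delta s
\end{equation*}
by passing from the $\Delta$-integral to the ordinary Riemann integral via Proposition~\ref{Prop-6} (since $\psi^{\Delta}(s)(\psi(t)-\psi(s))^{\alpha-1}$ is increasing), exactly as was done in the proofs of Theorems~\ref{teorema32} and \ref{th-3.4}. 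This evaluates to $(\psi(t)-\psi(0))^{\alpha}/\Gamma(\alpha+1)$, giving the claimed bound $M_u^{\circ}=|y_1|+\frac{M}{g^{\T}(\gamma-1,1-\gamma)\Gamma(\alpha+1)}(\psi(t)-\psi(0))^{\alpha}$.

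The main obstacle I anticipate is conceptual rather than computational: one must carefully justify that substituting the constructed $u(t)$ back into the integral equation indeed produces a solution satisfying $y(1)=y_1$, which implicitly requires that the fixed-point argument for existence (Theorems~\ref{teorema32}--\ref{th-3.4}) still applies once the control term $Bu$ is incorporated. Since the control is expressed through $(\mathscr{W}_{\alpha})^{-1}$, there is a subtle self-referential dependence of $u$ on $y$; resolving this typically requires a separate fixed-point argument on the augmented map, which I would defer to the controllability theorem proper. For this lemma, however, the estimate is the straightforward part, relying only on the assumed boundedness $\|(\mathscr{W}_{\alpha})^{-1}\|\leq M_W$ and the uniform bound $M$ on $f$, so the routine majorization via Proposition~\ref{Prop-6} completes the argument.
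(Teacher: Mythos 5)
Your proposal is correct and follows essentially the same route as the paper: the paper verifies the formula by substituting $u(t)$ into the integral equation and collapsing the control term via $\mathscr{W}_{\alpha}(\mathscr{W}_{\alpha})^{-1}=I$, which is just the reverse reading of your inversion step, and the norm estimate (triangle inequality, $\|(\mathscr{W}_{\alpha})^{-1}\|\leq M_{W}$, the bound $M$ on $f$, and Proposition~\ref{Prop-6} to pass to the Riemann integral) is identical. Your closing remark about the self-referential dependence of $u$ on $y$ being deferred to a fixed-point argument is also exactly how the paper handles it, in the proof of Theorem~\ref{th-4.2}.
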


\begin{proof} 
Let $y(t)$ be a solution of system \eqref{eqI} 
on $[0,1]\subset\T$ defined by \eqref{2.5}. Then, 
\begin{eqnarray*}
y(t)&=&\frac{1}{g^{\T}(\gamma-1,1-\gamma)\Gamma(\alpha)}\int_{0}^{t}\psi^{\Delta}(s)(\psi(t)-\psi(s))^{\alpha-1}
\big[f(s,y(s))+(Bu)(s)\big]\Delta s\\
&=&\frac{1}{g^{\T}(\gamma-1,1-\gamma)\Gamma(\alpha)}\int_{0}^{t}\psi^{\Delta}(s)(\psi(t)-\psi(s))^{\alpha-1}f(s,y(s))\Delta s\\
&\quad&+\frac{1}{g^{\T}(\gamma-1,1-\gamma)\Gamma(\alpha)}\int_{0}^{t}\psi^{\Delta}(s)(\psi(t)-\psi(s))^{\alpha-1}
B(\mathscr{W}_{\alpha})^{-1} \\&& \bigg[y_1
-\frac{1}{g^{\T}(\gamma-1,1-\gamma)\Gamma(\alpha)}\int_{0}^{t}\psi^{\Delta}(\xi)(\psi(t)-\psi(\xi))^{\alpha-1}
f(\xi,y(\xi))\Delta\xi\bigg]\Delta s\\
&=&\frac{1}{g^{\T}(\gamma-1,1-\gamma)\Gamma(\alpha)}\int_{0}^{t}\psi^{\Delta}(s)(\psi(t)-\psi(s))^{\alpha-1}f(s,y(s))\Delta s\\
&\quad&+\mathscr{W}_{\alpha}(\mathscr{W}_{\alpha})^{-1}\bigg[y_1
+\frac{1}{g^{\T}(\gamma-1,1-\gamma)\Gamma(\alpha)}\int_{0}^{t}\psi^{\Delta}(\xi)(\psi(t)-\psi(\xi))^{\alpha-1}
f(\xi,y(\xi))\Delta\xi\bigg]\\
&=&y_1.
\end{eqnarray*}

In this sense, we have the estimative
\begin{eqnarray*}
|u(t)|&=&\bigg|(\mathscr{W}_{\alpha})^{-1}\bigg(y_1-\frac{1}{g^{\T}(\gamma-1,
1-\gamma)\Gamma(\alpha)}\int_{0}^{t}\psi^{\Delta}(s)(\psi(t)-\psi(s))^{\alpha-1}f(s,y(s))\Delta s\bigg)\bigg|\\
&\leq &|(\mathscr{W}_{\alpha})^{-1}\bigg(|y_1|
+\frac{1}{g^{\T}(\gamma-1,1-\gamma)\Gamma(\alpha)}\int_{0}^{t}\psi^{\Delta}(s)
(\psi(t)-\psi(s))^{\alpha-1}|f(s,y(s))|\Delta s\bigg)\\
&\leq & M_{W}^{\circ}\bigg(|y_1|+\frac{M}{g^{\T}(\gamma-1,1-\gamma)\Gamma(\alpha)}
\int_{0}^{t}\psi^{\Delta}(s)(\psi(t)-\psi(s))^{\alpha-1}\Delta s\bigg)\\
&\leq & M_{W}^{\circ}\bigg(|y_1|+\frac{M}{g^{\T}(\gamma-1,1-\gamma)\Gamma(\alpha)}
\int_{0}^{t}\psi'(s)(\psi(t)-\psi(s))^{\alpha-1}{\rm d}s\bigg)\\
&=&M_{W}^{\circ}\bigg(|y_1|+\frac{M}{g^{\T}(\gamma-1,1-\gamma)\Gamma(\alpha+1)}
\,(\psi(t)-\psi(0))^{\alpha}\bigg)\\
&=& M_{u}^{\circ}.
\end{eqnarray*}
Therefore, the proof is complete.
\end{proof}

To finalize the main results, next we present the proof 
that guarantees that the system \eqref{eqI} is controllable.

\begin{proof} (of Theorem \ref{th-4.2})
Consider the subset ${D}_{\psi,\delta}\subseteq C_{1-\gamma,\psi}(J,\R)$ as follows:
\begin{equation*}
{D}_{\psi,\delta}=\big\{x\in C_{1-\gamma,\psi}(J,\R):\|x\|_{C_{1-\gamma,\psi}}\leq\delta\big\}.
\end{equation*}

We define the operator ${K}:{D}_{\psi,\delta}\to {D}_{\psi,\delta}$ as
\begin{equation*}
({K}y)(t)= \frac{1}{g^{\T}(\gamma-1,1-\gamma)\Gamma(\alpha)}
\int_{0}^{t}\psi^{\Delta}(s)(\psi(t)-\psi(s))^{\alpha-1}
\big(f(s,y(s))+(Bu)(s)\big)\Delta s.    
\end{equation*}

Note that the operator ${K}$ is well defined and the fixed points of ${K}$ are solutions to \eqref{eqI}. 
Indeed, $x\in {D}_{\psi,\delta}$ is a solution of \eqref{eqI} if, and only if, it is a solution 
of the operator equation $x={K}x$. Therefore, the existence of a solution of \eqref{eqI} 
is equivalent to determine a positive constant $\delta$ such that ${K}$ 
has a fixed point on ${D}_{\psi,\delta}$. We decompose the operator ${K}$ into two operators 
${K}_1$ and ${K}_2$, ${K}={K}_1+{K}_2$ on ${D}_{\psi,\delta}$, where
\begin{eqnarray*}
({K}_1 y)(t)=\frac{1}{g^{\T}(\gamma-1,1-\gamma)\Gamma(\alpha)}\int_{0}^{t}\psi^{\Delta}(s)
(\psi(t)-\psi(s))^{\alpha-1} Bu(s)\Delta s, \quad t\in[0,1]=J\subset\T
\end{eqnarray*} 
and
\begin{eqnarray*}
({K}_2 y)(t)= \frac{1}{g^{\T}(\gamma-1,1-\gamma)\Gamma(\alpha)}
\int_{0}^{t}\psi^{\Delta}(s)(\psi(t)-\psi(s))^{\alpha-1} f(s,u(s))\Delta s.
\end{eqnarray*}

\noindent \rm{Step 1:} The operator ${K}_1$ maps ${D}_{\psi,\delta}$ into itself. 
For each $t\in J$ and $x\in {D}_{\psi,\delta}$, it follows from Lemma~\ref{lemma 3.4} that
\begin{eqnarray*}
&&\big\|(g(t)-\psi(0))^{1-\gamma}(K_1 x)(t)\big\|\notag \\
&=&\Bigg\|\frac{(\psi(t)-\psi(0))^{1-\gamma}}{g^{\T}(\gamma-1,1-\gamma)\Gamma(\alpha)}
\int_{0}^{t}\psi^{\Delta}(s) (\psi(t)-\psi(s))^{\alpha-1}(Bu)(s)\Delta s\Bigg\|\\
&\leq &\frac{(\psi(t)-\psi(0))^{1-\gamma}}{g^{\T}(\gamma-1,1-\gamma)\Gamma(\alpha)}
\int_{0}^{t}\psi^{\Delta}(s) (\psi(t)-\psi(s))^{\alpha-1}\|B\| \|u(s)\|\Delta s\\
&\leq & \frac{(\psi(t)-\psi(0))^{1-\gamma}}{g^{\T}(\gamma-1,1-\gamma)\Gamma(\alpha)}\, M_{B}
\int_{0}^{t}\psi^{\Delta}(s)(\psi(t)-\psi(s))^{\alpha-1}M_{W}^{\circ}\\
&\quad & \times \Bigg(|y_1|+\frac{M}{g^{\T}(\gamma-1,1-\gamma)\Gamma(\alpha+1)}(\psi(s)-\psi(0))^{\alpha}\Bigg)\Delta s\\
&\leq& \frac{(\psi(1)-\psi(0))^{1-\gamma}}{g^{\T}(\gamma-1,1-\gamma)\Gamma(\alpha)}\,M_{W}^{\circ}\,M_{B}
\Bigg(|y_1| +\frac{M}{g^{\T}(\gamma-1,1-\gamma)\Gamma(\alpha+1)}(\psi(1)-\psi(0))^{\alpha}\Bigg)\\
&\quad& \int_{0}^{t}
\psi'(s)(\psi(t)-\psi(s))^{\alpha-1}{\rm d}s\\
&\leq& \frac{(\psi(1)-\psi(0))^{1-\beta(1-\alpha)}}{g^{\T}(\gamma-1,1-\gamma)\Gamma(\alpha+1)}\,M_{W}^{\circ}
\, M_{B}\Bigg(|y_1|+\frac{M}{g^{\T}(\gamma-1,1-\gamma)\Gamma(\alpha+1)}(\psi(1)-\psi(0))^{\alpha}\Bigg)\\
&\leq & \delta,
\end{eqnarray*}
which implies that $\|{K}_1 y\|_{C_{1-\gamma,\psi}}\leq\delta$. 
Thus, ${K}_1$ maps ${D}_{\psi,\delta}$ into itself.

\noindent \rm{Step 2:} The operator ${K}_2$ is continuous. Let $\{y_n\}$ be a sequence 
in ${D}_{\psi,\delta}$ satisfying $y_n\to y$ as $n\to\infty.$ Then, for each $t\in J$, one has
\begin{eqnarray*}
&&\Big\|(\psi(t)-\psi(0))^{1-\gamma}\big(({K}_2 y_n)(t)-({K}_2 y)(t)\big)\Big\|_{C}\\
&\leq& \frac{(\psi(t)-\psi(0))^{1-\gamma}}{g^{\T}(\gamma-1,1-\gamma)\Gamma(\alpha)}\int_{0}^{t}
\psi^{\Delta}(s)(\psi(t)-\psi(s))^{\alpha-1}\|f(s,y_n(s))-f(s,y(s))\|\Delta s\\
&\leq &\frac{(\psi(t)-\psi(0))^{1-\gamma}}{g^{\T}(\gamma-1,1-\gamma)\Gamma(\alpha)}\int_{0}^{t}g'(s)
(\psi(t)-\psi(s))^{\alpha-1}\|f(s,y_n(s))-f(s,y(s))\|{\rm d}s\\
&\leq &\|f(\cdot\,,y_n(\cdot))-f(\cdot\,,y(\cdot))\|_{C_{1-\gamma,\psi}}
\frac{(\psi(t)-\psi(0))^{1-\beta(1-\alpha)}}{g^{\T}(\gamma-1,1-\gamma)\Gamma(\alpha+1)}.
\end{eqnarray*}

By the Lebesgue dominated convergence theorem, we know that
$\|{K}_2 y_n-{K}_2 y\|_{C_{1-\gamma,\psi}}\to 0$ 
as $n\to\infty$. This means that ${K}_2$ is continuous.

\noindent \rm{Step 3:} Now we show that ${K}_2({D}_{\psi,\delta})\subset {D}_{\psi,\delta}$.
We prove this by contradiction, supposing that there exists a function 
$\eta(\cdot)\in {D}_{\psi,\delta}$ such that $\|({K}_2 y)\|_{C_{1-\gamma,\psi}}>\delta$. 
Thus, under such assumption, for each $t\in J$ we get
\begin{eqnarray*}
\delta &<&\big\|(\psi(t)-\psi(0))^{1-\gamma}({K}_2\eta)(t)\big\|\\
&\leq &\frac{(\psi(t)-\psi(0))^{1-\gamma}}{g^{\T}(\gamma-1,1-\gamma)\Gamma(\alpha)}
\int_{0}^{t}\psi^{\Delta}(s)(\psi(t)-\psi(s))^{\alpha-1}\|f(s,\eta(s))\|\Delta s\\
&\leq &\frac{(\psi(t)-\psi(0))^{1-\gamma}}{g^{\T}(\gamma-1,1-\gamma)\Gamma(\alpha)}
\int_{0}^{t}\psi'(s)(\psi(t)-\psi(s))^{\alpha-1}\|f(s,\eta(s))\|{\rm d}s\\ 
&\leq &M\,
\frac{(\psi(1)-\psi(0))^{1-\beta(1-\alpha)}}{g^{\T}(\gamma-1,1-\gamma)\Gamma(\alpha+1)}\,\delta.
\end{eqnarray*}
Dividing both sides by $\delta$, and taking the limit as ${K}\to\infty$, we get
\begin{equation*}
M\,\frac{(\psi(1)-\psi(0))^{1-\beta(1-\alpha)}}{g^{\T}(\gamma-1,1-\gamma)\Gamma(\alpha+1)}\geq{1},
\end{equation*}
which contradicts \eqref{4.1}. This shows that 
${K}_2({D}_{\psi,\delta})\subset {D}_{\psi,\delta}$.

\noindent \rm{Step 4:} Now we show that ${K}_2({D}_{\psi,\delta})$ is bounded and equicontinous.
From Step 3, it is clear that ${K}_2({D}_{\psi,\delta})$ is bounded. It remains to show that 
${K}_2({D}_{\psi,\delta})$ is equicontinuous. Indeed, we have
\begin{eqnarray*}
&&\big\|(\psi(t_2)-\psi(0))^{1-\gamma}({K}_2 y)(t_2)-(\psi(t_1)-\psi(0))^{1-\gamma}({K}_1 y)(t_1)\big\|\\
&=&\Bigg\|\frac{(\psi(t_2)-\psi(0))^{1-\gamma}}{g^{\T}(\gamma-1,1-\gamma)\Gamma(\alpha)}\int_{0}^{t_2}
\psi^{\Delta}(s)(\psi(t_2)-\psi(s))^{\alpha-1}f(s,y(s))\Delta s\\
&\qquad &-\frac{(\psi(t_1)-\psi(0))^{1-\gamma}}{g^{\T}(\gamma-1,1-\gamma)\Gamma(\alpha)}\int_{0}^{t_1}
\psi^{\Delta}(s)(\psi(t_1)-\psi(s))^{\alpha-1}f(s,y(s))\Delta s\Bigg\|\\
&\leq&\Bigg\|\frac{(\psi(t_2)-\psi(0))^{1-\gamma}
-(\psi(t_1)-\psi(0))^{1-\gamma}}{g^{\T}(\gamma-1,1-\gamma)\Gamma(\alpha)} 
\int_{0}^{t_2}\psi^{\Delta}(s)(\psi(t_2)-\psi(s))^{\alpha-1}f(s,y(s))\Delta s\Bigg\|\\
&\quad& +\Bigg\|\frac{1}{g^{\T}(\gamma-1,1-\gamma)
\Gamma(\alpha)}(\psi(t_1)-\psi(0))^{1-\gamma}\int_{t_1}^{t_2}\psi^{\Delta}(s)
(\psi(t_2)-\psi(s))^{\alpha-1}f(s,y(s))\Delta s\Bigg\|\\
&\quad& +\Bigg\|\frac{1}{g^{\T}(\gamma-1,1-\gamma)\Gamma(\alpha)}
(\psi(t_1)-\psi(0))^{1-\gamma}\int_{0}^{t_1}\psi^{\Delta}(s)
(\psi(t_1)-\psi(s))^{\alpha-1}f(s,y(s))\Delta s\Bigg\|\\
&\leq& \frac{(\psi(t_2)-\psi(0))^{1-\gamma}-(\psi(t_1)
-\psi(0))^{1-\gamma}}{g^{\T}(\gamma-1,1-\gamma)\Gamma(\alpha)}\,M
\int_{0}^{t_2}g'(s)(\psi(t_2)-\psi(s))^{\alpha-1}{\rm d}s\\
&\quad& +\frac{(\psi(t_1)-\psi(0))^{1-\gamma}}{g^{\T}(\gamma-1,1-\gamma)\Gamma(\alpha)}\,M
\int_{t_1}^{t_2}\psi'(s)(\psi(t_2)-\psi(s))^{\alpha-1}{\rm d}s\\
&\quad& +\frac{(\psi(t_1)-\psi(0))^{1-\gamma}}{g^{\T}(\gamma-1,1-\gamma)\Gamma(\alpha)}\,M
\int_{0}^{t_1}\psi'(s)\big[(\psi(t_2)-\psi(s))^{\alpha-1}-(\psi(t_1)-\psi(s))^{\alpha-1}\big]{\rm d}s\\
\end{eqnarray*}

\begin{eqnarray*}
&\leq& \frac{(\psi(t_2)-\psi(0))^{1-\gamma}-(\psi(t_1)
-\psi(0))^{1-\gamma}}{g^{\T}(\gamma-1,1-\gamma)\Gamma(\alpha+1)}\,M
(\psi(t_2)-\psi(0))^{\alpha}\\
&\quad& +\frac{(\psi(t_1)-\psi(0))^{1-\gamma}}{g^{\T}(\gamma-1,
1-\gamma)\Gamma(\alpha+1)}\,M(\psi(t_2)-\psi(t_1))^{\alpha}\\
&\quad& + \frac{(\psi(t_1)-\psi(0))^{1-\gamma}}{g^{\T}(\gamma-1,
1-\gamma)\Gamma(\alpha+1)}\,M\big[(\psi(t_2)-\psi(0))^{\alpha}
-(\psi(t_1)-\psi(0))^{\alpha}\big]\to 0
\end{eqnarray*}
as $t_2\to t_1$. It follows that
$\|{K}_2 y-{K}_1 y\|_{C_{1-\gamma,\psi}}\to 0$ as $t_2\to t_1$.
Hence, ${K}_2({D}_{\psi,\delta})$ is equicontinuous.

As a consequence of Steps 2--4, together with the Arzel\`{a}-Ascoli theorem, 
one has that ${K}_2$ is compact. Hence, from Steps 1-4 and Lemma~\ref{lemma3.2}, 
we conclude that ${K}={K}_1+{K}_2$ is continuous and takes bounded sets into bounded sets. 
Also, one can verify the validity of $\mu\big({K}_2({D}_{\psi,\delta})\big)=0$ since 
${K}_2({D}_{\psi,\delta})$ is relatively compact. It follows from the inclusion 
${K}_1({D}_{\psi,\delta})\subset {D}_{\psi,\delta}$ and the equality 
$\mu\big({K}_2({D}_{\psi,\delta})\big)=0$ that
\begin{equation*}
\mu\big({K}({D}_{\psi,\delta})\big)\leq\mu\big({K}_1({D}_{\psi,\delta})\big)
+ \mu\big({K}_2({D}_{\psi,\delta})\big)\leq\mu({D}_{\psi,\delta})
\end{equation*}
for every bounded set ${D}_{\psi,\delta}$ 
of $C_{1-\gamma,\psi}(J,\R)$ with $\mu({D}_{\psi,\delta})>0$.
Since ${K}({D}_{\psi,\delta})\subset {D}_{\psi,\delta}$ for the convex, closed and bounded set 
${D}_{\psi,\delta}$ of $C_{1-\gamma,\psi}(J,\R)$, all conditions of the Sadovskii fixed point theorem
are satisfied and we conclude that the operator ${K}$ has a fixed point $x\in {D}_{\psi,\delta}$ 
that is a solution of \eqref{eqI} with $y(1)=y_1$. Therefore, \eqref{eqI} is controllable on $J$.
\end{proof}


\section{Conclusion and future work}

We presented a new version for the $\psi$-Hilfer fractional derivative, 
in the sense of time scales, proving the fundamental properties of this new derivative. 
The respective proofs were presented in detail and discussed. On the other hand, 
the theory of differential equations on time scales is of great relevance in several areas. 
So, in this sense, in order to present an approach on dynamic equations on time scales
via $\psi$-Hilfer fractional derivatives, we investigated the existence, 
uniqueness, and controllability of solution 
for the problems (\ref{I*}) and (\ref{eqI}), respectively. 

Although we were able to obtain some important results, questions arose 
during the work for future work. These include:
\begin{enumerate}
\item The discussion of type I and type II generalized Leibniz rules 
for the $\psi$-Hilfer fractional derivative in time scales.

\item If one presents a time-scale version of the $\psi$-Hilfer 
fractional derivative of variable order, will the semigroup property also not hold? 
What is lost and what is gained?

\item Is it possible to discuss issues of continuous 
dependence on data and issues of reachability
for fractional dynamic equations on time scales?
    
\item Are $\psi$-Hilfer fractional derivatives in time scales 
relevant to discuss some mathematical modeling problems 
via Laplace transforms in time-scales?
\end{enumerate}

Numerous other issues, involving fractional derivatives 
and differential equations on time scales, can be discussed from our work
and we trust they will emerge naturally over time.


\section*{Acknowledgements}

This work was supported by \emph{Funda\c{c}\~{a}o 
Cearense de Apoio ao Desenvolvimento Cient\'{\i}fico e Tecnol\'{o}gico} (FUNCAP) 
of Brazil, project BP4-00172-00054.02.00/20; and by
\emph{Funda\c{c}\~{a}o para a Ci\^{e}ncia e a Tecnologia} (FCT) of Portugal,
project UIDB/04106/2020 (CIDMA). The authors would like to thank the referees 
for their careful reading of the work and its many useful suggestions.




\begin{thebibliography}{xx}

\bibitem{Agarwal} 
R. P. Agarwal\ and\ M. Bohner, 
Basic calculus on time scales and some of its applications, 
Results Math. {\bf 35} (1999), no.~1-2, 3--22. 
\url{https://doi.org/10.1007/BF03322019}

\bibitem{Aghayan} 
Z. S. Aghayan, A. Alfi\ and\ J. A. Tenreiro Machado, 
Observer-based control approach for fractional-order 
delay systems of neutral type with saturating actuator, 
Math. Methods Appl. Sci. {\bf 44} (2021), no.~11, 8554--8564.
\url{https://doi.org/10.1002/mma.7282}

\bibitem{Aghayan1} 
Z. S. Aghayan, A. Alfi\ and\ J. A. Tenreiro Machado, 
LMI-based stability analysis of fractional order systems 
of neutral type with time varying delays under actuator saturation, 
Comput. Appl. Math. {\bf 40} (2021), no.~4, Paper No. 142, 24~pp. 
\url{https://doi.org/10.1007/s40314-021-01522-6}

\bibitem{Ahmadkhanlu} 
A. Ahmadkhanlu\ and\ M. Jahanshahi, 
On the existence and uniqueness of solution of initial value problem 
for fractional order differential equations on time scales, 
Bull. Iranian Math. Soc. {\bf 38} (2012), no.~1, 241--252. 
\url{https://doi.org/10.1007/s10489-012-0357-9}

\bibitem{Anastassiou} 
G. A. Anastassiou, 
Principles of delta fractional calculus on time scales and inequalities, 
Math. Comput. Modelling {\bf 52} (2010), no.~3-4, 556--566. 
\url{https://doi.org/10.1016/j.mcm.2010.03.055}

\bibitem{Atici} 
F. M. Atici\ and\ P. W. Eloe, 
Fractional $q$-calculus on a time scale, 
J. Nonlinear Math. Phys. {\bf 14} (2007), no.~3, 333--344. 
\url{https://doi.org/10.2991/jnmp.2007.14.3.4}

\bibitem{novo7} 
D.~I. Baleanu, H. Mohammadi\ and\ S. Rezapour, 
Analysis of the model of HIV-1 infection of $CD4^+$ T-cell 
with a new approach of fractional derivative, 
Adv. Difference Equ. {\bf 2020}, Paper No.~71, 17~pp.
\url{https://doi.org/10.1186/s13662-020-02544-w.}

\bibitem{Bastos} 
N. R. O. Bastos, D. Mozyrska\ and\ D. F. M. Torres, 
Fractional derivatives and integrals on time scales 
via the inverse generalized Laplace transform, 
Int. J. Math. Comput. {\bf 11} (2011), J11, 1--9. 
\url{http://www.ceser.in/ceserp/index.php/ijmc/article/view/2497}
{\tt arXiv:1012.1555}

\bibitem{Belaid} 
M. Belaid, A. Ardjouni, H. Boulares\ and\ A. Djoudi,
Stability by Krasnoselskii's fixed point theorem 
for nonlinear fractional dynamic equations on a time scale, 
Honam Math. J. {\bf 41} (2019), no.~1, 51--65. 
\url{https://doi.org/10.5831/HMJ.2019.41.1.51}

\bibitem{Benaissa}
A. Benaissa~Cherif\ and\ F.~Z. Ladrani, 
New properties of the time-scale fractional operators 
with application to dynamic equations, 
Math. Morav. {\bf 25} (2021), no.~1, 123--136.
\url{https://doi.org/10.5937/matmor2101123b}

\bibitem{Benkhettou} 
N. Benkhettou, A. Hammoudi\ and\ D. F. M. Torres, 
Existence and uniqueness of solution for a fractional 
Riemann--Liouville initial value problem on time scales,
J. King Saud Univ. Sci. {\bf 28} (2016), no.~1, 87--92. 
\url{https://doi.org/10.1016/j.jksus.2015.08.001}
{\tt arXiv:1508.00754}

\bibitem{Bohner} 
M. Bohner\ and\ A. Peterson, 
{\it Dynamic equations on time scales}, 
Birkh\"{a}user Boston, Inc., Boston, MA, 2001. 
\url{https://doi.org/10.1007/978-1-4612-0201-1}

\bibitem{novo3} 
S. Etemad, I. Avci, P. Kumar, D. Baleanu\ and\ S. Rezapour, 
Some novel mathematical analysis on the fractal-fractional model 
of the AH1N1/09 virus and its generalized Caputo-type version, 
Chaos Solitons Fractals {\bf 162} (2022), Paper No. 112511, 15~pp.
\url{https://doi.org/10.1016/j.chaos.2022.112511.}

\bibitem{Harikrishnana} 
S. Harikrishnana, R. Ibrahim\ and\ K. Kanagarajan,
An arbitrary order differential equation on times scales,
Univ. J. Math. Appl. {\bf 1} (2018), no.~4, 262--266.
\url{https://doi.org/10.32323/ujma.456191}

\bibitem{Hassani} 
H. Hassani, J. A. Tenreiro Machado\ and\ S. Mehrabi, 
An optimization technique for solving a class 
of nonlinear fractional optimal control problems: 
application in cancer treatment, 
Appl. Math. Model. {\bf 93} (2021), 868--884. 
\url{https://doi.org/10.1016/j.apm.2021.01.004}

\bibitem{novo2} 
H. Khana, K. Alam, H. Gulzar, S. Etemad\ and\ S. Rezapour,
A case study of fractal-fractional tuberculosis model in China: 
existence and stability theories along with numerical simulations, 
Math. Comput. Simulation {\bf 198} (2022), 455--473. 
\url{https://doi.org/10.1016/j.matcom.2022.03.009.}

\bibitem{Kilbas} 
A. A. Kilbas, H. M. Srivastava\ and\ J. J. Trujillo, 
{\it Theory and applications of fractional differential equations}, 
North-Holland Mathematics Studies, 204, Elsevier Science B.V., Amsterdam, 2006.

\bibitem{Kumar} 
V. Kumar\ and\ M. Malik, 
Existence, uniqueness and stability of nonlinear implicit fractional 
dynamical equation with impulsive condition on time scales, 
Nonauton. Dyn. Syst. {\bf 6} (2019), no.~1, 65--80.
\url{https://doi.org/10.1515/msds-2019-0005}

\bibitem{Kumar1} 
V. Kumar\ and\ M. Malik, 
Controllability results of fractional integro-differential 
equation with non-instantaneous impulses on time scales, 
IMA J. Math. Control Inform. {\bf 38} (2021), no.~1, 211--231. 
\url{https://doi.org/10.1093/imamci/dnaa008}

\bibitem{Lakshmikantham} 
V. Lakshmikantham, S. Leela\ and\ J. Vasundhara Devi, 
{\it Theory of fractional dynamic systems}, 
Cambridge Academic Publishers, Cambridge, 2009.

\bibitem{Lopes} 
A. M. Lopes\ and\ J. A. Tenreiro Machado, 
Multidimensional scaling analysis of generalized 
mean discrete-time fractional order controllers, 
Commun. Nonlinear Sci. Numer. Simul. {\bf 95} (2021), Paper No. 105657, 11~pp.
\url{https://doi.org/10.1016/j.cnsns.2020.105657}

\bibitem{Malik} 
M. Malik\ and\ V. Kumar, 
Existence, stability and controllability results of coupled 
fractional dynamical system on time scales, 
Bull. Malays. Math. Sci. Soc. {\bf 43} (2020), no.~5, 3369--3394. 
\url{https://doi.org/10.1007/s40840-019-00871-0}

\bibitem{novo6} 
M. M. Matar, M. I. Abbas, J. Alzabut, M. K. A. Kaabar, S. Etemad\ and\ S. Rezapour,
Investigation of the $p$-Laplacian nonperiodic nonlinear boundary value problem 
via generalized Caputo fractional derivatives, 
Adv. Difference Equ. {\bf 2021}, Paper No. 68, 18 pp. 
\url{https://doi.org/10.1186/s13662-021-03228-9}

\bibitem{Mekhalfi} 
K. Mekhalfi\ and\ D. F. M. Torres,
Generalized fractional operators on time scales with application to dynamic equations,
Eur. Phys. J. Special Topics {\bf 226} (2017), no.~16-18, 3489--3499.
\url{https://doi.org/10.1140/epjst/e2018-00036-0}
{\tt arXiv:1804.02536}

\bibitem{novo4} 
H. Mohammadi, S. Kumar, S. Rezapour\ and\ S. Etemad,
A theoretical study of the Caputo-Fabrizio fractional 
modeling for hearing loss due to mumps virus with optimal control, 
Chaos Solitons Fractals {\bf 144} (2021), Paper No. 110668, 13~pp.
\url{https://doi.org/10.1016/j.chaos.2021.110668}

\bibitem{Oliveira} 
D. S. Oliveira\ and\ E. Capelas de Oliveira,
Hilfer-Katugampola fractional derivatives, 
Comput. Appl. Math. {\bf 37} (2018), no.~3, 3672--3690. 
\url{https://doi.org/10.1007/s40314-017-0536-8}

\bibitem{Podlubny} 
I. Podlubny, 
{\it Fractional differential equations}, 
Mathematics in Science and Engineering, 198, 
Academic Press, Inc., San Diego, CA, 1999.

\bibitem{novo8} 
S. Rezapour, S. Etemad\ and\ H. Mohammadi, 
A mathematical analysis of a system of Caputo-Fabrizio 
fractional differential equations for the anthrax disease 
model in animals, 
Adv. Difference Equ. {\bf 2020}, Paper No. 481, 30~pp. 
\url{https://doi.org/10.1186/s13662-020-02937-x.}

\bibitem{MR4420453}
A. M. Shloof, N. Senu, A. Ahmadian, N. M. A. Nik Long\ and\ S. Salahshour,
Solving fractal-fractional differential equations 
using operational matrix of derivatives 
via Hilfer fractal-fractional derivative sense, 
Appl. Numer. Math. {\bf 178} (2022), 386--403. 
\url{https://doi.org/10.1016/j.apnum.2022.02.006}

\bibitem{Ammi} 
M. R. Sidi Ammi\ and\ D. F. M. Torres,
Existence and uniqueness results for a fractional 
Riemann-Liouville nonlocal thermistor problem on arbitrary time scales, 
J. King Saud Univ. Sci. {\bf 30} (2018), no.~3, 381--385.
\url{https://doi.org/10.1016/j.jksus.2017.03.004} 
{\tt arXiv:1703.05439}

\bibitem{Torres} 
D. F. M. Torres, 
Cauchy's formula on nonempty closed sets and a new notion 
of Riemann-Liouville fractional integral on time scales, 
Appl. Math. Lett. {\bf 121} (2021), Paper No.~107407, 6~pp. 
\url{https://doi.org/10.1016/j.aml.2021.107407}
{\tt arXiv:2105.04921}

\bibitem{novo1} 
N.~H. Tuan, H. Mohammadi\ and\ S. Rezapour, 
A mathematical model for COVID-19 transmission by using 
the Caputo fractional derivative, 
Chaos Solitons Fractals {\bf 140} (2020), 110107, 11~pp. 
\url{https://doi.org/10.1016/j.chaos.2020.110107.}

\bibitem{Sousa} 
J. Vanterler da C. Sousa\ and\ E. Capelas de Oliveira, 
On the $\psi$-Hilfer fractional derivative, 
Commun. Nonlinear Sci. Numer. Simul. {\bf 60} (2018), 72--91. 
\url{https://doi.org/10.1016/j.cnsns.2018.01.005}

\bibitem{Sousa1} 
J. Vanterler da C. Sousa\ and\ E. Capelas de Oliveira, 
Leibniz type rule: $\psi$-Hilfer fractional operator, 
Commun. Nonlinear Sci. Numer. Simul. {\bf 77} (2019), 305--311. 
\url{https://doi.org/10.1016/j.cnsns.2019.05.003}

\bibitem{Sousa3} 
J. Vanterler da C. Sousa, M. N. N. dos Santos, E. da Costa, 
L. A. Magna\ and\ E. Capelas de Oliveira, 
A new approach to the validation of an ESR fractional model, 
Comput. Appl. Math. {\bf 40} (2021), no.~3, Paper No. 93, 20~pp. 
\url{https://doi.org/10.1007/s40314-021-01485-8}

\bibitem{Sousa4} 
J. Vanterler da C. Sousa, G. S. F. Frederico\ and\ E. Capelas de Oliveira, 
$\psi$-Hilfer pseudo-fractional operator: new results about fractional calculus, 
Comput. Appl. Math. {\bf 39} (2020), no.~4, Paper No. 254, 33~pp.
\url{https://doi.org/10.1007/s40314-020-01304-6}

\bibitem{novo}
J. Vanterler da C. Sousa, L. S. Tavares\ and\ C. E. T. Ledesma, 
A variational approach for a problem involving a $\psi$-Hilfer fractional operator, 
J. Appl. Anal. Comput. {\bf 11} (2021), no.~3, 1610--1630. 
\url{https://doi.org/10.11948/20200343}

\bibitem{Sousa5} 
J. Vanterler da C. Sousa, J. A. Tenreiro Machado\ and\ E. Capelas de Oliveira,
The $\psi$-Hilfer fractional calculus of variable order and its applications, 
Comput. Appl. Math. {\bf 39} (2020), no.~4, Paper No. 296, 35~pp.
\url{https://doi.org/10.1007/s40314-020-01347-9}

\bibitem{Sousa2} 
J. Vanterler da C. Sousa, M. Vellappandi, V. Govindaraj\ and\ G. S. F. Frederico,
Reachability of fractional dynamical systems using 
$\psi$-Hilfer pseudo-fractional derivative, 
J. Math. Phys. {\bf 62} (2021), no.~8, Paper No. 082703, 17~pp.
\url{https://doi.org/10.1063/5.0049341}

\bibitem{Williams} 
P. A. Williams, 
Fractional calculus on time scales with Taylor's theorem, 
Fract. Calc. Appl. Anal. {\bf 15} (2012), no.~4, 616--638. 
\url{https://doi.org/10.2478/s13540-012-0043-y}

\bibitem{Yan} 
R. A. Yan, S. R. Sun\ and\ Z. L. Han, 
Existence of solutions of boundary value problems 
for Caputo fractional differential equations on time scales, 
Bull. Iranian Math. Soc. {\bf 42} (2016), no.~2, 247--262. 
\url{http://bims.iranjournals.ir/article_757_6c272666edd826df2c68e8aa2ebafd12.pdf}

\bibitem{Zhu} 
J. Zhu\ and\ L. Wu, 
Fractional Cauchy problem with Caputo nabla derivative on time scales, 
Abstr. Appl. Anal. {\bf 2015} (2015), Art. ID 486054, 23~pp. 
\url{https://doi.org/10.1155/2015/486054}

\end{thebibliography}
\end{document}